\newtheorem{thm}{Theorem}[section]
\newtheorem{lem}[thm]{Lemma}
\newtheorem{prop}[thm]{Proposition}
\newtheorem{que}[thm]{Question}
\numberwithin{equation}{section}
\theoremstyle{remark}
\newtheorem{remark}{Remark}
\newcommand{\bd}[1]{\mathbf{#1}}  % for bolding symbols
\newcommand{\RR}{\mathbb{R}}      % for Real numbers
\newcommand{\ZZ}{\mathbb{Z}}      % for Integers
\newcommand{\CC}{\mathbb{C}}
\newcommand{\mat}[1]{\left(\begin{matrix} #1 \end{matrix} \right)}  % for matrix
\newcommand{\al}[1]{\begin{align}#1\end{align}}
\newcommand{\aln}[1]{\begin{align*}#1\end{align*}}
\newcommand{\HH}{\mathbb{H}}
\newcommand{\jj}{\bd{j}}
\newcommand{\bs}{\backslash}
\newcommand{\br}{\text{BR}}
\newcommand{\bms}{\text{BMS}}
\newcommand{\ps}{\text{PS}}
\begin{document}
\raggedbottom

\title{Statistical Regularity of Apollonian Gaskets}
\author[Xin Zhang]{Xin Zhang}
\address{%
Department of Mathematics, University of Illinois, 1409 W Green Street, Urbana, IL 61801
}
\email{xz87@illinois.edu}
\begin{abstract} 
Apollonian gaskets are formed by repeatedly filling the gaps between three mutually tangent circles with further tangent circles. In this paper we give explicit formulas for the the limiting pair correlation and the limiting nearest neighbor spacing of centers of circles from a fixed Apollonian gasket. These are corollaries of the convergence of moments that we prove.  The input from ergodic theory is an extension of Mohammadi-Oh's Theorem on the equidisribution of expanding horospheres in infinite volume hyperbolic spaces.

\end{abstract}

\maketitle
\section{Introduction}
\subsection{Introduction to the problem and statement of results}
Apollonian gaskets, named after the ancient Greek mathematician Apollonius of Perga (200 BC), are fractal sets formed by starting with three mutually tangent circles and iteratively inscribing new circles into the curvilinear triangular gaps (see Figure \ref{construction}). \par
\begin{figure}[h]
\begin{center}
\includegraphics[scale=0.3]{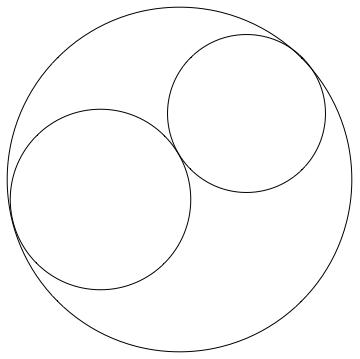}
\includegraphics[scale=0.3]{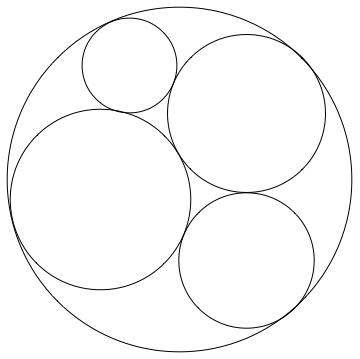}
\includegraphics[scale=0.3]{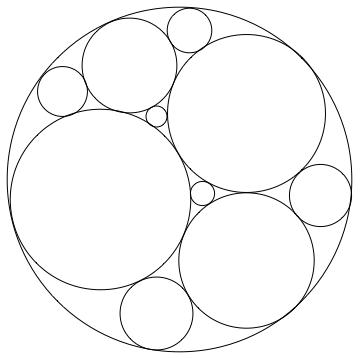}
\end{center}
\caption{Construction of an Apollonian gasket}
\label{construction}
\end{figure}
The last 15 years have overseen tremendous progress in understanding the structure of Apollonian gaskets from different viewpoints, such as number theory and geometry \cite{GLMWY03}, \cite{Fu10}, \cite{BF12}, \cite{BK14}, \cite{KO11}, \cite{OS12}.  In the geometric direction, generalizing a result of \cite{KO11}, Hee Oh and Nimish Shah proved the following remarkable theorem concerning the growth of circles. \par
Place an Apollonian gasket $\mathcal{P}$ in the complex plane $\mathbb C$.  Let $\mathcal P_t$ be the set of circles from $\mathcal P$ with radius greater than $e^{-t}$, and let $\mathcal C_t$ be the set of centers from $\mathcal P_t$.  Oh-Shah proved:
\begin{thm}[Oh-Shah, Theorem 1.6, \cite{OS12}]\label{0827}There exists a finite Borel measure $v$ supported on $\overline{\mathcal P}$, such that for any open set $E \subset \mathbb C$ with boundary $\partial E$ empty or piecewise smooth (see Figure \ref{aponr}), the cardinality $N(E,t)$ of the set $\mathcal C_t\cap E$, satisfies 
$$\lim_{t\rightarrow \infty}\frac{N(E,t)}{e^{\delta t}}=v(E),$$
where $\delta\approx 1.305688$ \cite{Mc98} is the Hausdorff dimension of any Apollonian gasket.
\end{thm}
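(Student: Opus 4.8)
The strategy is to pass to a Kleinian-group picture, reduce the count to an orbit-counting problem on an infinite-volume frame bundle, and feed in mixing of the Bowen--Margulis--Sullivan measure. It is classical that $\PC$ is the limit set of a geometrically finite reflection group; its orientation-preserving index-two subgroup is a Zariski-dense, geometrically finite Kleinian group $\Gamma < G := \PSL_2(\CC) = \operatorname{Isom}^+(\HH^3)$ with $\Lambda(\Gamma) = \PC$, whose critical exponent (by Sullivan) equals $\delta = \dim_H \PC$ and whose Bowen--Margulis--Sullivan measure $m^{\bms}$ on $\Gamma\bs G$ is finite. The circles of $\PC$ form finitely many $\Gamma$-orbits, so it suffices to treat one orbit $\Gamma\cdot C_0$. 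Let $\hat C_0 \subset \HH^3$ be the geodesic plane bounded by $C_0$, $H := \operatorname{Stab}_G(\hat C_0) \cong \PGL_2(\RR)$, and $\Gamma_0 := \Gamma \cap H$; then $\Gamma\cdot C_0 \leftrightarrow \Gamma_0\bs\Gamma$. The skinning measure $\mu^{\mathrm{sk}}_{\hat C_0}$ on $\Gamma_0\bs H$ --- the Patterson--Sullivan-type measure carried by the unit normals of $\hat C_0$ --- is finite by geometric finiteness and is nonzero because one of the two disks cut off by $C_0$ is disjoint from $\PC$ while its complement carries all the Patterson--Sullivan mass.

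\textbf{Step 2 (curvature becomes a homogeneous region).} There is an explicit formula expressing the Euclidean curvature $1/r$ and the center of $\gamma C_0$ as functions of $\gamma \in G$, quadratic in the matrix entries. Using coordinates on $G$ adapted to $\hat C_0$, with $a_s$ the geodesic flow normal to $\hat C_0$, one checks that, up to a controlled boundary term, the condition ``$\operatorname{curv}(\gamma C_0) < e^{t}$ and $\operatorname{center}(\gamma C_0) \in E$'' is equivalent to the coset $\Gamma_0\gamma$ lying in a region $\mathcal B_t(E) \subset \Gamma_0\bs G$ built from the geodesic-flow translates $H a_{-s}$, $0\le s\le t$, of the $H$-orbit, thickened over a neighborhood of $E$; this region contains $\asymp e^{\delta t}$ points of $\Gamma_0\bs\Gamma$. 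Hence $N(E,t) = \#\big(\Gamma_0\bs\Gamma \cap \mathcal B_t(E)\big)$ up to a lower-order term.

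\textbf{Step 3 (equidistribution of expanding translates --- the crux).} The analytic heart is that for $\psi \in C_c(\Gamma\bs G)$,
\[
e^{(2-\delta)t}\int_{\Gamma_0\bs H}\psi\big(\Gamma h\,a_{-t}\big)\,dh \;\longrightarrow\; \frac{|\mu^{\mathrm{sk}}_{\hat C_0}|}{|m^{\bms}|}\;m^{\br}(\psi) \qquad (t\to\infty),
\]
where $m^{\br}$ is the Burger--Roblin measure on $\Gamma\bs G$. To prove this, use the local product structure $G \approx N^-AN^+$ near the identity (with $H = N_H^-A_HN_H^+$), rewrite the $H$-integral as the integral of the $a_{-t}$-translate of a cross-section against a narrow bump function on $\Gamma\bs G$, decompose the bump along $N^-$, $A$, $N^+$, and invoke mixing of the geodesic flow with respect to $m^{\bms}$ (Flaminio--Spatzier, Roblin) together with a thickening argument; the contraction and expansion rates of $a_{-t}$ on the $N^{\pm}$-factors account for the normalizing exponent $2-\delta$. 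The essential difficulty --- and the only point at which the hypothesis $\delta > 1$ enters, which is satisfied here since $\delta \approx 1.3057$ --- is to rule out escape of mass of the translated orbit into the rank-one cusps of $\Gamma\bs G$; this requires a quantitative non-divergence estimate phrased against Patterson--Sullivan densities, since in infinite volume the geodesic flow is mixing only for the singular, fractal-supported measure $m^{\bms}$, not for Lebesgue volume. For $\delta \le 1$ both the normalization and the order of growth of $N(E,t)$ change.

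\textbf{Step 4 (unfolding and the boundary condition).} Now run the Duke--Rudnick--Sarnak / Eskin--McMullen unfolding argument in the infinite-volume form due to Oh--Shah: approximate $\mathbf 1_{\mathcal B_t(E)}$ from above and below by smooth functions, fold the sum $N(E,t) = \sum_{\Gamma_0\bs\Gamma}\mathbf 1_{\mathcal B_t(E)}$ against a bump on $\Gamma\bs G$, apply Step 3, and unfold the limit; this converts the equidistribution into the asymptotic $N(E,t) \sim v(E)\,e^{\delta t}$, where $v$ is a finite measure on $\CC$ built from $m^{\br}$ and the Patterson--Sullivan density via the center coordinate of the regions $\mathcal B_t(E)$. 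It is finite because $|m^{\bms}|$ and $|\mu^{\mathrm{sk}}_{\hat C_0}|$ are finite, supported on $\overline{\PC} = \Lambda(\Gamma)$ because $m^{\br}$ is, and of pure dimension $\delta$. Since $\delta > 1$, such a measure assigns zero mass to every rectifiable curve, so $v(\partial E) = 0$ whenever $\partial E$ is empty or piecewise smooth; sandwiching $E^{-\epsilon} \subset E \subset E^{+\epsilon}$ and letting $\epsilon \to 0$ then upgrades the smoothed convergence to the sharp asymptotic $N(E,t)/e^{\delta t} \to v(E)$ claimed in the theorem.
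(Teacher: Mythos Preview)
This theorem is not proved in the present paper: it is quoted from Oh--Shah \cite{OS12} (and restated later with explicit constants as Theorem~\ref{circlecounting}) and used as a black-box input to the authors' own results. There is therefore no proof in the paper to compare your proposal against.

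That said, your sketch is a fair outline of the strategy in the cited reference: pass to the symmetry group $\Gamma$, identify circles with cosets $\Gamma/\Gamma_H$, convert the curvature-and-center condition into a region in $\Gamma_H\backslash G$ via the $HAN$/$NAH$ coordinates, feed in equidistribution of $a_t$-translates of the $H$-orbit (obtained from mixing of the BMS measure via the usual thickening/wavefront argument), and unfold. Two small corrections. First, in Step~1, for the specific $\Gamma$ used in this paper (and in \cite{OS12}) the circles of $\mathcal P$ form a \emph{single} $\Gamma$-orbit, not merely finitely many; this is arranged explicitly in Section~\ref{setup}. Second, your Step~3 locates the role of the inequality $\delta>1$ in a non-divergence/escape-of-mass estimate for the translated orbit; in the actual argument what is used is the finiteness of the skinning measure $\mu_{H}^{\mathrm{PS}}$ on $\Gamma_H\backslash H$, which follows (see Section~4.1 here, citing Theorem~6.3 of \cite{OS13}) from the fact that the relevant cusp has rank~$1$ and $\delta>1$. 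The equidistribution step itself (your displayed limit) does not require $\delta>1$; rather, the condition guarantees that the orbital integral one is equidistributing is finite to begin with.
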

\begin{figure}[!h]
   \includegraphics[scale = 0.4]{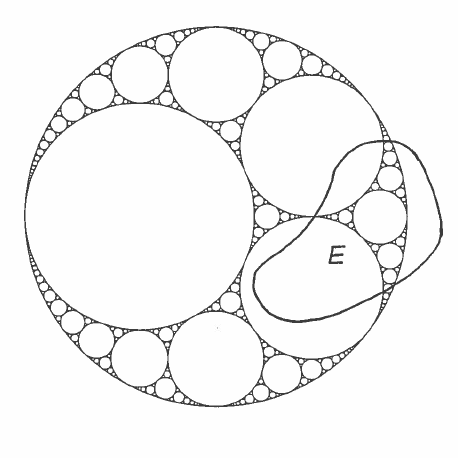}{\caption{A region $E$ with piecewise smooth boundary\label{aponr}}\label{center}}
\end{figure}
Theorem \ref{0827} gives a satisfactory explanation on how circles are distributed in an Apollonian gasket in large scale. In this paper we study some questions concerning the fine scale distribution of circles, for which Theorem \ref{0827} yields little information. For example, one such question is the following.
\begin{que}\label{que1}Fix $\xi>0$.  How many circles in $\mathcal P_t$ are within distance $\xi/e^t$ of a random circle in $\mathcal P_t$?
\end{que}
Here by distance of two circles we mean the Euclidean distance of their centers. Question \ref{que1} is closely related to the pair correlation of circles. In this article, we study the pair correlation and the nearest neighbor spacing of circles, which concern the fine structures of Apollonian gaskets. In particular, Theorem \ref{pairthm} gives an asymptotic formula for one half of the expected number of circles in Question \ref{que1},  as $t\rightarrow \infty$.  \par

Let $E\subset \CC$ be an open set with $\partial E$ empty or piecewise smooth as in Theorem \ref{0827}, and with $E\cap \mathcal P\neq\emptyset$ (or equivalently, $v(E)>0$).  This is our standard assumption for $E$ throughout this paper.  The pair correlation function $P_{E,t}$ on the growing set $\mathcal C_t$ is defined as
\al{\label{1910}
P_{E,t}(\xi):=\frac{1}{2\#\{\mathcal C_t\cap E\}}\sum_{\substack{p, q\in\mathcal C_t\cap E\\q\neq p}}\bd{1}\{e^{t}|p-q|<\xi\},
}
where $\xi\in(0,\infty)$ and $|p-q|$ is the Euclidean distance between $p$ and $q$ in $\mathbb C$. We have a factor $1/2$ in the definition \eqref{1910} so that each pair of points is counted only once. \par

For any $p\in\mathcal C_t$, let $d_t(p)=\min\{|q-p|:q\in\mathcal C_t,q\neq p\}$.
The nearest neighbor spacing function $Q_{E,t}$ is defined as
\al{\label{1911}
Q_{E,t}(\xi):=\frac{1}{\#\{\mathcal C_t\cap E\}}\sum_{\substack{p\in\mathcal C_t\cap E}}\bd{1}\{e^td_t(p)<\xi\}.
}

For simplicity we abbreviate $P_{E,t},Q_{E,t}$ as $P_t,Q_t$ if $E=\mathbb C$.  It is noteworthy that in both definitions \eqref{1910} and \eqref{1911}, we normalize distance by multiplying by $e^t$.  The reason can be seen in two ways.  First, Theorem \ref{0827} implies that a random circle in $\mathcal C_t$ has radius $\asymp e^{-t}$, so a random pair of nearby points (say, the centers of two tangent circles) from $\mathcal C_t$ has distance $\asymp e^{-t}$, thus $e^{-t}$ is the right scale to measure the distance of two nearby points in $\mathcal C_t$.  The second explanation is more informal:  if $N$ points are randomly distributed in the unit interval $[0,1]$, then a random gap is of the scale $N^{-1}$; more generally, if $N$ points are randomly distributed in a compact $n$-fold, the distance between a random pair of nearby points should be of the scale $N^{-1/n}$.  In our situation, as $t\rightarrow\infty$, the set $\mathcal{C}_t$ converges to $\overline{\mathcal P}$, where $\overline{\mathcal{P}}$ has Hausdorff dimension $\delta\approx 1.305688$.  From Theorem \ref{0827}, we know that $\#\mathcal C_t\asymp e^{\delta t}$, so our scaling $e^{-t}$ agrees with the heuristics that the distance between two random nearby points in $\mathcal C_t$ should be $(e^{\delta t})^{-\frac{1}{\delta}}=e^{-t}$.  

Before stating our main results, we introduce terminology. It is convenient for us to work with the upper half-space model of the hyperbolic 3-space $\mathbb H^3$:
$$\mathbb H^3=\{z+r\bd{j}: z=x+y\bd{i}\in \mathbb C, r\in\mathbb R\}.$$  
We identify the boundary $\partial \mathbb H^3$ of $\mathbb H^3$ with $\mathbb C\cup\{\infty\}$. For $q=x+y\bd{i}+r\jj\in\HH^3$, we define $\Re(q)=x+y\bd{i}$ and $\Im(q)=r$.\par
Let $G=PSL(2,\mathbb C)$ be the group of orientation-preserving isometries of $\mathbb H^3$.  We choose a discrete subgroup $\Gamma< PSL(2,\mathbb C)$ whose limit set $\Lambda(\Gamma)=\overline{\mathcal P}$ such that $\Gamma$ acts transitively on circles from $\mathcal{P}$.  It follows from Corollary 1.3, \cite{BJ97} that $\Gamma$ is geometrically finite.\par

Without loss of generality, we can assume that the bounding circle of $\mathcal{P}$ is $C(0,1)$, where $C(z,R)\subset \mathbb C$ is the circle centered at $z$ with radius $R$.
Let $S\subset \mathbb H^3$ be the hyperbolic geodesic plane with $\partial S=C(0,1)$, and $H< PSL(2,\mathbb C)$ be the stabilizer of $S$. \par
As an isometry on $\mathbb H^3$, each $g\in PSL(2,\mathbb C)$ sends $S$ to a geodesic plane, which is either a vertical plane or a hemisphere in the upper half-space model of $\mathbb H^3$.  We define continuous maps $\bd{q}: G\rightarrow\overline{\HH^3}$, $\bd{q}_\Re: G \rightarrow\widehat{\CC}$ as follows:  
\al{\label{1244}&\bd{q}(g):=\begin{cases}\text{the apex of }g(S),&\text{if }\infty \not\in g(\partial S),\\
\infty, & \text{if } \infty\in g(\partial S),
\end{cases} \\
\label{1245}&\bd{q}_{\Re}(g):=\begin{cases}\Re(\bd{q}(g)),&\text{if }\infty \not\in g(\partial S),\\
\infty, & \text{if } \infty\in g(\partial S).
\end{cases}}
 
We further define a few subsets of $\mathbb H^3$.  For $\xi>0$, let $B_{\xi}:=\{z\in\mathbb C: |z|<\xi\}$ and let $B_{\xi}^{*}\subset \mathbb{H}^3$ be the ``infinite chimney'' with base $B_{\xi}$, where for any $\Omega\subset \mathbb C$, 
\al{\Omega^*:=\{z+r\bd{j}:z\in\Omega, r\in (1,\infty) \}.\label{0530}}
Let $\mathfrak C_\xi$ be the cone in $\HH^3$:
\al{\label{1010}\mathfrak{C}_{\xi}:=\left\{z+r\bd{j}\in \HH^3: \frac{r}{|z|}>\frac{1}{\xi},0<r\leq 1\right\}.}
Now we can state our main theorems.
\begin{thm}[limiting pair correlation]\label{pairthm} For any open set $E\subset \mathbb C$ with $E\cap \mathcal P\neq\emptyset$ and $\partial E$ empty or piecewise smooth, there exists a continuously differentiable function $P$ independent of $E$, supported on $[c,\infty)$ for some $c>0$, such that 
$$\lim_{t\rightarrow\infty} P_{E,t}(\xi)=P(\xi).$$
The derivative $P'$ of $P$ is explicitly given by 
$$P'(\xi)=\frac{\delta}{2\mu_H^{\normalfont\text{PS}}(\Gamma_H\bs H)}\int_{h\in\Gamma_H\bs H}\sum_{\substack{\gamma\in\gamma_H\bs(\Gamma-\Gamma_H)\\\bd{q}(h^{-1}\gamma^{-1})\in B_{\xi}^*\cup \mathfrak C_\xi}}\frac{\vert\bd{q}_{\Re}(h^{-1}\gamma^{-1})\vert^{\delta}}{\xi^{\delta+1}}d\mu_{H}^{\normalfont\ps}(h).$$
\end{thm}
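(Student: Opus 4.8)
The plan is to express the pair correlation as a ratio of counting functions and identify the limit through the transitive action of $\Gamma$ on circles together with an equidistribution theorem in $\Gamma\backslash G$. Write $\mathcal N_t(\xi,E):=\sum_{p\ne q\in\mathcal C_t\cap E}\bd 1\{e^t|p-q|<\xi\}$, so that $P_{E,t}(\xi)=\mathcal N_t(\xi,E)/\bigl(2\,\#(\mathcal C_t\cap E)\bigr)$. By Theorem \ref{0827}, $\#(\mathcal C_t\cap E)\sim v(E)e^{\delta t}$, and a standard boundary estimate ($\partial E$ being $v$-null and the displacement $O(e^{-t})$) reduces us to $P_{E,t}(\xi)=\tfrac12\,\mathbb E\bigl[X_t(p,\xi)\bigr]+o(1)$, where $p$ is drawn uniformly from $\mathcal C_t\cap E$ and $X_t(p,\xi):=\#\{q\in\mathcal C_t:0<|p-q|<\xi e^{-t}\}$; the crucial feature is that $X_t(p,\xi)$ is bounded uniformly in $t$, since a ball of radius $\xi e^{-t}$ meets $O_\xi(1)$ circles of radius $>e^{-t}$.

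Now fix $p$, let its circle $C_p$ have radius $e^{-s}$ with $s=s(p)\in[0,t)$, and renormalize $C_p$ to $C(0,1)=\partial S$ by a M\"obius map $\gamma_1$ with $\gamma_1(C_p)=C(0,1)$; this $\gamma_1$ is determined modulo $H=\mathrm{Stab}_G(S)$ on the left, and modulo $\Gamma$ the residual datum $h$ lives on the closed orbit $\Gamma_H\backslash H$, $\Gamma_H=\Gamma\cap H$. Since $|\gamma_1'|\asymp e^{s}$ over the region occupied by the nearby companions (the pole of $\gamma_1$ lying inside $C_p$, where there are no gasket circles), a companion circle of radius $>e^{-t}$ whose center is within $\xi e^{-t}$ of that of $C_p$ maps to a circle whose apex $\bd q(h^{-1}\gamma^{-1})$, with $\gamma\in\Gamma_H\backslash(\Gamma-\Gamma_H)$ the relative group element, has $\Im\,\bd q>e^{-u}$ and $|\bd q_\Re|<\xi e^{-u}$, where $u:=t-s$; thus $X_t(p,\xi)=\#\{\gamma:\Im\,\bd q(h^{-1}\gamma^{-1})>e^{-u},\ |\bd q_\Re(h^{-1}\gamma^{-1})|<\xi e^{-u}\}$ up to a harmless distortion. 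The ergodic input now asserts two things about the limit $t\to\infty$: (i) the scale $u$ has limiting law $\delta e^{-\delta u}\,du$ on $(0,\infty)$ -- a direct consequence of Theorem \ref{0827}, which gives $\asymp e^{\delta\sigma}$ circles of radius $>e^{-\sigma}$; and (ii) conditionally on $u$, the renormalization datum $h$ equidistributes in $\Gamma_H\backslash H$ against the Patterson--Sullivan measure $\mu_H^{\ps}$. Statement (ii) is the announced extension of Mohammadi--Oh's theorem on equidistribution of expanding horospheres (equivalently, of $a_t$-translates of the closed $H$-orbit) in the infinite-volume manifold $\Gamma\backslash\mathbb H^3$; here $X_t(p,\xi)$, though bounded, is the value of a function on $\Gamma\backslash G$ that is neither smooth nor compactly supported, so one needs a version of the equidistribution valid, uniformly in $t$, for such rough functions, and the natural vehicle is the ``convergence of moments'': one proves that $\mathbb E[X_t(p,\xi)^k]$ converges for every $k$ and invokes determinacy, after replacing $\bd 1_{B_\xi^*\cup\mathfrak C_\xi}$ by smooth functions with $\mu_H^{\ps}$-small symmetric difference and truncating excursions into the cusps.

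Granting (i) and (ii), $\lim_t P_{E,t}(\xi)=\tfrac12\int_0^\infty\delta e^{-\delta u}\bigl(\tfrac1{\mu_H^{\ps}(\Gamma_H\backslash H)}\int_{\Gamma_H\backslash H}\#\{\gamma:\Im\,\bd q(h^{-1}\gamma^{-1})>e^{-u},\,|\bd q_\Re(h^{-1}\gamma^{-1})|<\xi e^{-u}\}\,d\mu_H^{\ps}(h)\bigr)du$. Interchanging the $u$-integral with the $\gamma$-sum (justified by boundedness and the compact $u$-support noted below), the contribution of each $\gamma$ is the $\delta e^{-\delta u}\,du$-mass of $\{u>0:-\log\Im\,\bd q<u<\log(\xi/|\bd q_\Re|)\}$ (abbreviating $\bd q=\bd q(h^{-1}\gamma^{-1})$), which is nonempty precisely when $\bd q\in B_\xi^*\cup\mathfrak C_\xi$ and then equals $\min(\Im\,\bd q,1)^\delta-|\bd q_\Re|^\delta/\xi^\delta$. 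Hence $P(\xi)=\frac{1}{2\mu_H^{\ps}(\Gamma_H\backslash H)}\int_{\Gamma_H\backslash H}\sum_{\gamma:\,\bd q(h^{-1}\gamma^{-1})\in B_\xi^*\cup\mathfrak C_\xi}\bigl(\min(\Im\,\bd q,1)^\delta-|\bd q_\Re|^\delta/\xi^\delta\bigr)\,d\mu_H^{\ps}(h)$. Differentiating in $\xi$ is now clean: as $\xi$ increases a new $\gamma$ enters the sum exactly when $|\bd q_\Re|=\xi\min(\Im\,\bd q,1)$, at which instant its summand vanishes, so the new terms enter differentiably and contribute nothing; what survives is $\partial_\xi\bigl(-|\bd q_\Re|^\delta/\xi^\delta\bigr)=\delta\,|\bd q_\Re|^\delta/\xi^{\delta+1}$, summed over the $\gamma$ in the current region -- the claimed formula for $P'$. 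Dominated convergence (the region meeting $O_\xi(1)$ of the $\gamma$) justifies differentiating under $\int$ and $\sum$ and gives $P\in C^1$; the support statement follows because for $\mu_H^{\ps}$-a.e.\ $h$ the circles of the renormalized gasket are uniformly separated from $C(0,1)$ at scale $1$, so $B_\xi^*\cup\mathfrak C_\xi$ captures no $\gamma$ once $\xi$ is small (which also supplies the compact $u$-support used above).

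The main obstacle is statement (ii): upgrading the Mohammadi--Oh / Oh--Shah equidistribution machinery so that it applies to the unbounded, non-smooth test functions arising here, uniformly in the geodesic-flow parameter, with the tails coming from the cusps and the thin part of $\Gamma\backslash\mathbb H^3$ under control. Once this is available -- packaged as the convergence-of-moments theorem -- the remaining steps (the reduction, the renormalization, the Fubini interchange, and the differentiation) are the routine bookkeeping sketched above.
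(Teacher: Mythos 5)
Your proposal is essentially the same as the paper's proof: your renormalization to $(u,h)\in(0,\infty)\times\Gamma_H\backslash H$, with limiting joint law $\delta e^{-\delta u}\,du\otimes d\mu_H^{\ps}(h)/\mu_H^{\ps}(\Gamma_H\backslash H)$, is exactly the $HAN$ decomposition of the Burger--Roblin measure (Proposition \ref{0130}) combined with the extended Mohammadi--Oh equidistribution (Theorem \ref{07141}), and your interchange of the $u$-integral with the $\gamma$-sum and the subsequent differentiation reproduce \eqref{1641}, \eqref{1120}, and \eqref{2230}. One minor point: moment determinacy is not needed---the paper obtains convergence of the mixed first moment $P_{E,t,\epsilon}$ directly from Theorem \ref{07141} applied to $F_{\boldsymbol\Omega}^{\boldsymbol\beta}$ with $k=2$, $\Omega_1=B_\epsilon$, $\Omega_2=B_\xi$---but dropping that detour does not affect your reduction or your formulas for $P$ and $P'$.
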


Here $\Gamma_H:=\Gamma\cap H$, and $\mu_H^{\ps}$ is a Patterson-Sullivan type measure on $H$.  Besides $\mu_H^{\ps}$, we will also encounter other conformal measures $\mu_{N}^{\ps}, w, m^{\br},m^{\bms}$, which are built on the Patterson-Sullivan densities. The measure $\mu_{N}^{\ps}$ is a Patterson-Sullivan type measure on the horospherical group $N:=\left\{n_z=\mat{1&z\\0&1}:z\in\mathbb C\right\}$, $w$ is the pullback measure of $\mu_N^{\ps}$ on $\CC$ under the identification $z\rightarrow n_z$, and $m^{\br},m^{\bms}$ are the Burger-Roblin, Bowen-Margulis-Sullivan measures.  We will have a detailed discussion of these measures in Section \ref{0129}. \par

See Figure \ref{pair} and Figure \ref{deri} for some numerical evidence for Theorem \ref{pairthm}.  Let $\mathcal P(\theta_1,\theta_2)$ be the unique Apollonian gasket determined by the four mutually tangent circles $C_0,C_1,C_2,C_3$, where $C_0=C(0,1)$ is the bounding circle, and $C_1,C_2,C_3$ are tangent to $C_0$ at $1,e^{\theta_1\bd{i}}$, $e^{\theta_2\bd{i}}$.  Figure \ref{pair}, Figure \ref{wholehalf} and Figure \ref{deri} are based on the gasket $\mathcal P(\frac{1.8\pi}{3},\frac{3.7\pi}{3})$.  Figure \ref{figure_1} suggests that the limiting pair correlations for different Apollonian gaskets are the same.  The reason is twofold. First, for a fixed gasket, the limiting pair correlation locally looks the same everywhere. Second, one can take any Apollonian gasket to any other one by a M\"obius transformation, which locally looks like a dilation combined with a rotation, and it is an elementary exercise to check that the limiting pair correlation is invariant under these motions.  
\par

\begin{figure}[H]
\includegraphics[scale = 0.5]{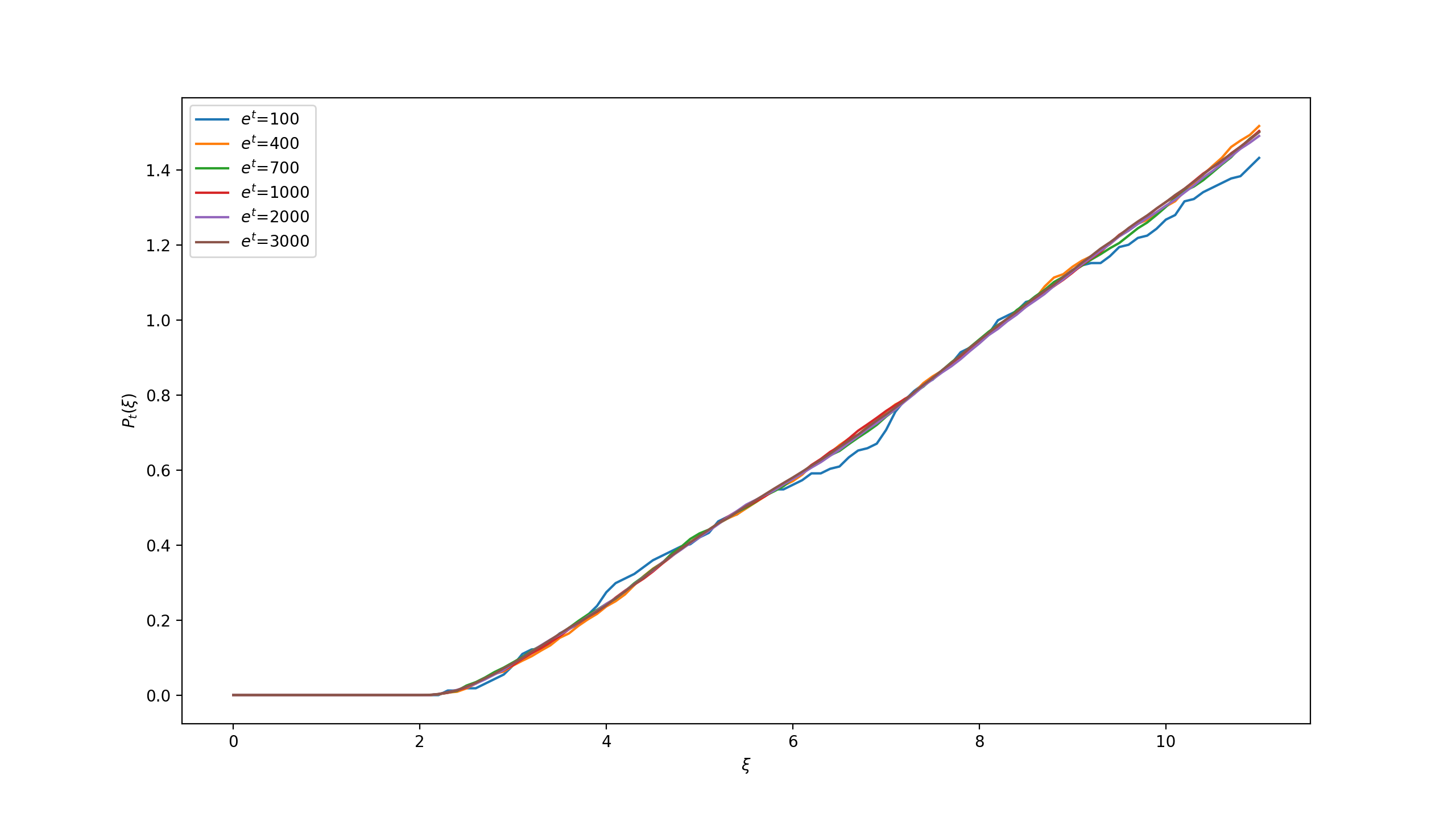}}{\caption{The plot for $P_t$ with various $t$'s}\label{pair}
\end{figure}

\begin{figure}[H]
\includegraphics[scale = 0.5]{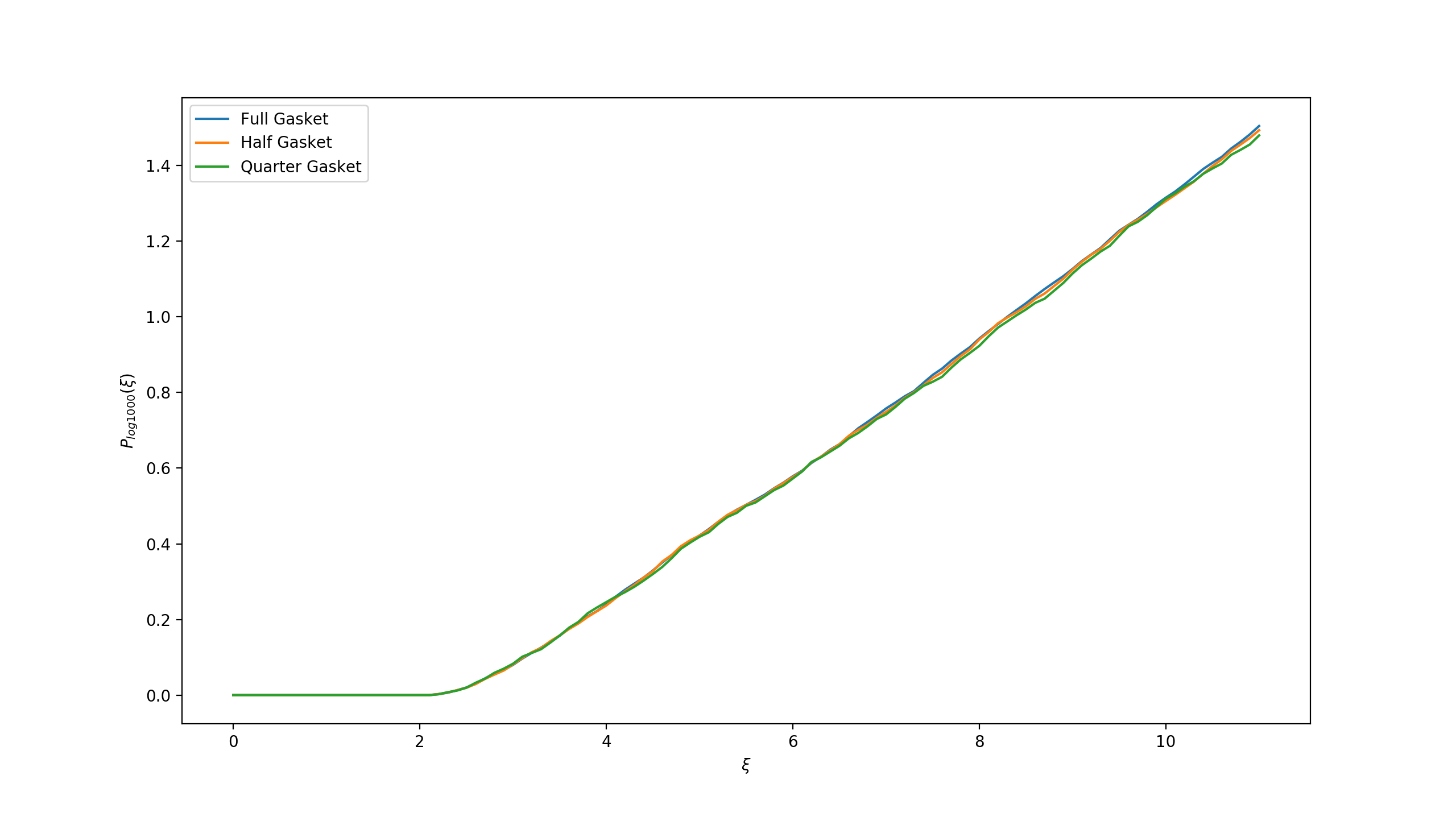}}{\caption{Pair correlations for the whole plane, half plane and the first quadrant}\label{wholehalf}
\end{figure}

\begin{figure}[H]
\includegraphics[scale = 0.5]{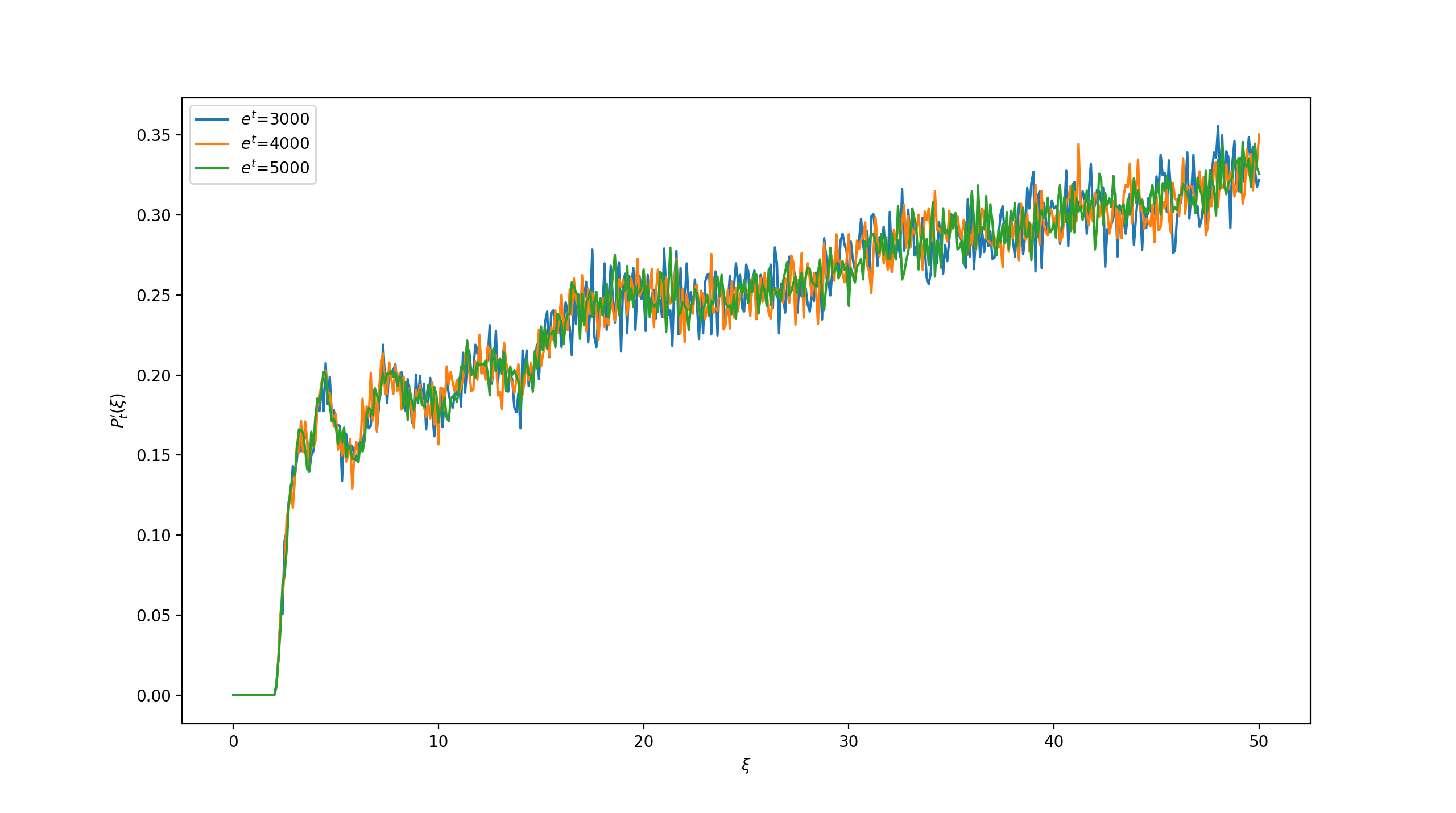}}{\caption{The empirical derivative $P_t'(\xi)$ for different $t$, with step=0.1}\label{deri}
\end{figure}

\begin{figure}[H]
\includegraphics[scale = 0.5]{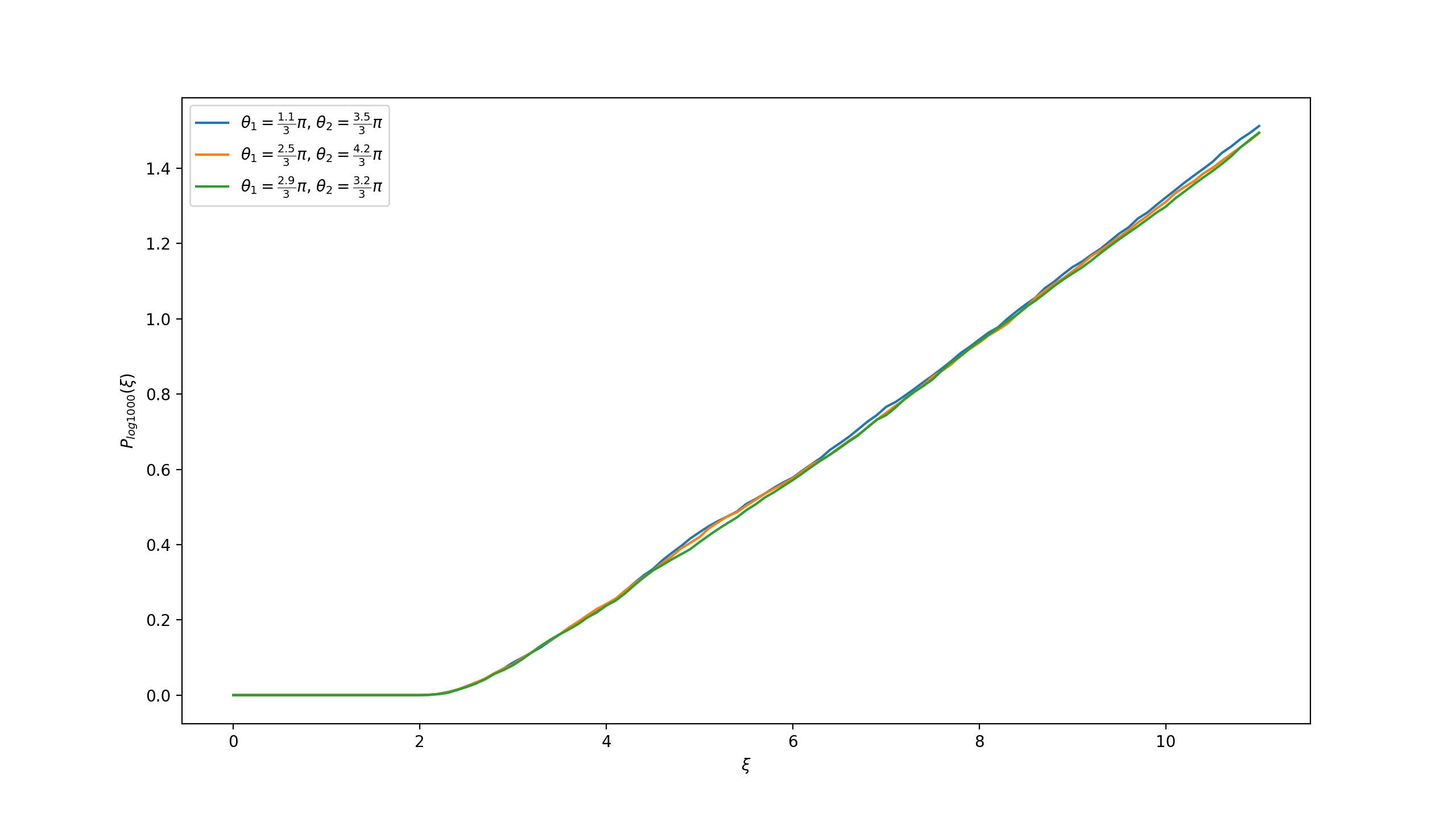}}{\caption{Pair correlation functions for different Apollonian gaskets}\label{figure_1}
\end{figure}

\begin{thm}[limiting nearest neighbor spacing]\label{nearthm}There exists a continuous function $Q$ independent of $E$, supported on $[c,\infty)$ for some $c>0$, such that 
\al{
\lim_{t\rightarrow\infty}Q_{E,t}(\xi)=Q(\xi).
} 

The formula for $Q$ is explicitly given by
\al{
Q(\xi)=1-\frac{\delta}{\mu_H^{\ps}(\Gamma_H\bs H)}\int_{\Gamma_H\bs H}\int_0^\infty e^{-\delta t}\bd{1}\{\#\bd{q}(a_{-t}h^{-1}(\Gamma-\Gamma_H)) \cap B_\xi^*=0\}dtd\mu_{H}^{\ps}(h).
}  
\end{thm}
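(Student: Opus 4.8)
The plan is to run the machinery behind Theorem \ref{pairthm}, but with its counting function replaced by an emptiness (``void'') indicator. Since $e^td_t(p)<\xi$ fails precisely when no circle of $\mathcal C_t$ other than $p$ has its center within $\xi e^{-t}$ of that of $p$, I would first write
$$Q_{E,t}(\xi)=1-V_{E,t}(\xi),\qquad V_{E,t}(\xi):=\frac{1}{\#\{\mathcal C_t\cap E\}}\sum_{p\in\mathcal C_t\cap E}\bd{1}\{e^td_t(p)\geq\xi\},$$
so it suffices to prove that $V_{E,t}(\xi)$ converges, as $t\to\infty$, to the integral appearing in the displayed formula for $Q(\xi)$. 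Using that $\Gamma$ acts transitively on the circles of $\mathcal P$, each circle $p$ corresponds to a coset $\Gamma_H\gamma_p\in\Gamma_H\bs\Gamma$ with $\gamma_p(p)=C_0$, its center and radius $R_p$ being read off from $\bd{q}(\gamma_p^{-1})$. I would then renormalize each reference circle: conjugate $p$ onto the bounding circle $C_0$ by $\gamma_p$, move the center of $p$ to the origin by an element $h\in H$ (possible since $H$ acts transitively on the open disk bounded by $C_0$), and apply the geodesic flow $a_{-t'}$ for the time $t'$ dictated by $R_p$ relative to the scale $e^{-t}$. Because the M\"obius maps involved are conformal they distort the relevant tiny scales only negligibly, so the event $\{e^td_t(p)\geq\xi\}$ becomes, up to an error vanishing as $t\to\infty$, the event $\{\#\bd{q}(a_{-t'}h^{-1}(\Gamma-\Gamma_H))\cap B_\xi^*=0\}$ --- exactly the chimney bookkeeping already in place for $P'$. (The cone $\mathfrak C_\xi$ of \eqref{1010}, which accompanies $B_\xi^*$ in the formula for $P'$, does not enter this emptiness statistic.)

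Next I would unfold the average over $p$. Writing $\sum_{p\in\mathcal C_t\cap E}$ as a sum over $\Gamma_H\bs\Gamma$ and lifting to $\Gamma\bs G$, the orbit of $H$ in $\Gamma\bs G$, namely $\Gamma_H\bs H$, carries the Patterson-Sullivan measure $\mu_H^{\ps}$, and its expanding translates $(\Gamma_H\bs H)a_{t'}$ equidistribute, after rescaling by $e^{-\delta t'}$, towards the Burger-Roblin measure $m^{\br}$ on $\Gamma\bs G$; this is the extended Mohammadi-Oh equidistribution theorem that drives the paper and already underpins Theorem \ref{pairthm}. Feeding the void indicator $F_\xi(h,t'):=\bd{1}\{\#\bd{q}(a_{-t'}h^{-1}(\Gamma-\Gamma_H))\cap B_\xi^*=0\}$ into this equidistribution, together with the decomposition of $m^{\br}$ along the flow direction (which, with the conformal density of dimension $\delta$, produces the weight $e^{-\delta t'}\,dt'$) and the normalization $\#\{\mathcal C_t\cap E\}\sim v(E)e^{\delta t}$ from Theorem \ref{0827} (which produces the constant $\delta/\mu_H^{\ps}(\Gamma_H\bs H)$), one reaches the asserted limit for $V_{E,t}(\xi)$. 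Establishing convergence here --- and, for the same effort, of the analogous $k$-point emptiness and counting statistics, which is the ``convergence of moments'' alluded to in the introduction and which likewise yields Theorem \ref{pairthm} --- is the substance of the argument.

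The analytic heart, and the step I expect to be the main obstacle, is justifying this passage to the limit: $F_\xi$ is neither continuous nor compactly supported on $\Gamma\bs G$ and, worse, it is a \emph{global} statistic, multiplicative rather than additive in the circles, so it is not the integral of any single nice test function and the equidistribution theorem cannot be applied to it verbatim. I would sandwich $F_\xi$ between continuous compactly supported functions from above and below, apply the equidistribution theorem to each, and bound the discrepancy by the $m^{\br}$- and $\mu_H^{\ps}$-mass of the ``boundary'' configurations: reference circles with a neighbour at distance within $o(1)$ of exactly $\xi e^{-t}$, and anomalously small reference circles. The former is controlled by a quantitative estimate on how much $\mu_H^{\ps}$ can charge a thin neighbourhood of $\partial B_\xi^*$ --- equivalently, by the continuity in $\xi$ of the limiting function (which one establishes along the way, giving also the continuity of $Q$ asserted in the theorem) --- while the latter is handled by Theorem \ref{0827}, which shows that circles of radius $\leq e^{-t+M}$ make up a fraction $\ll e^{-\delta M}$ of $\mathcal C_t\cap E$. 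Since $\Gamma$ is geometrically finite of infinite covolume there is no spectral gap to invoke, so the requisite uniform-in-$t$ non-escape of mass for the translated measures $e^{-\delta t'}(a_{t'})_*\mu_H^{\ps}$ must be extracted directly from the thick-thin geometry of $\Gamma\bs\HH^3$; this is the delicate point. The support statement is separate and easier: the uniform lower bound $|p-q|\gg\max(R_p,R_q)$ for distinct circles of a fixed gasket (a consequence of the discreteness of $\Gamma$) forces $F_\xi\equiv1$ once $\xi<c$, whence $Q$ vanishes on $[0,c)$, and it also keeps the sets $\bd{q}(a_{-t'}h^{-1}(\Gamma-\Gamma_H))\cap B_\xi^*$ finite.
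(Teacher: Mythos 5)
Your high-level outline shares the final formula's structure with the paper, but the central mechanism is missing and the equidistribution you invoke is not the one the paper uses.

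The paper does not unfold the discrete sum $\sum_{p\in\mathcal C_t\cap E}$ directly into a continuous average over $\Gamma_H\bs H\times[0,\infty)$. Instead it uses an $\epsilon$-thickening trick: it approximates $Q_{E,t}$ by
$$Q_{E,t,\epsilon}(\xi)=1-\frac{e^{2t}}{\pi\epsilon^2\#\{\mathcal C_t\cap E\}}\int_\CC \chi_E(z)\,\bd{1}\{\#(a_{-t}n_z\bd{q}(\Gamma)\cap B_\epsilon^*)=1\}\,\bd{1}\{\#(a_{-t}n_z\bd{q}(\Gamma)\cap B_\xi^*)=1\}\,dz,$$
satisfying $Q_{E,t}(\xi-\epsilon)\leq Q_{E,t,\epsilon}(\xi)\leq Q_{E,t}(\xi+\epsilon)$. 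The first indicator converts ``pick a center $p$'' into ``$z$ lies in the $\epsilon e^{-t}$-ball around some center,'' and dividing by the Euclidean area $\pi\epsilon^2 e^{-2t}$ makes the $z$-integral a proxy for the sum over $p$. This is precisely the pair $(\boldsymbol{\Omega},\bd{r})=((B_\epsilon,B_\xi),(1,1))$ in Theorem \ref{07141}, a theorem about expanding \emph{horospheres} $n_z a_t$ already extended (via Proposition \ref{1426}) to cover exactly these discontinuous, non-compactly-supported indicator test functions. Your proposal invokes instead the equidistribution of $(\Gamma_H\bs H)a_{t'}$, i.e.\ expanding $H$-orbit translates. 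That is a different statement; the Patterson-Sullivan measure $\mu_H^{\ps}$ and the weight $e^{-\delta t}\,dt$ do appear in the final answer, but only because $m^{\br}(F_{\bd\Omega,\bd r})$ is evaluated in the $HAN$ decomposition (Proposition \ref{0130}), not because an $H$-translate equidistribution is applied. Without the thickening step you have no continuous integral to hand to any equidistribution theorem: the sum over $p$ is a genuinely discrete sum over cosets $\Gamma_H\gamma_p$, not an integral over $\Gamma_H\bs H$, and the passage you describe (``move the center by an element $h\in H$'') does not produce one.

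Relatedly, you flag as ``the delicate point'' the uniform non-escape of mass for $e^{-\delta t'}(a_{t'})_*\mu_H^{\ps}$, but in the paper's route this problem never arises. The admissibility of the discontinuous, unbounded-support test function has already been handled once and for all by Proposition \ref{1426}, which bootstraps from the single known pair $(\chi_E,\Psi_0)$ (verified directly against Oh-Shah's counting theorem) to all Borel $\widetilde\Psi$ dominated by $\Psi_0$ with $m^{\br}$-null set of discontinuities. Your sandwich-by-$C_c$ idea is in the right spirit but is weaker: without the domination by a function already known to lie in $\mathcal W$, controlling the tail of a non-compactly-supported $\Psi$ under the diverging prefactor $e^{(2-\delta)t}$ is exactly the obstacle the paper identifies and circumvents. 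Your observations about the support $[c,\infty)$ (via the apex separation \eqref{2227}) and the finiteness of the local counts are correct and match the paper.
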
 
Here $a_{-t}$ is the diagonal matrix $\mat{e^{\frac{t}{2}}&0\\0&e^{-\frac{t}{2}}}$, and see Figure \ref{near} for numerical evidence.

\begin{figure}[H]
\includegraphics[scale=0.5]{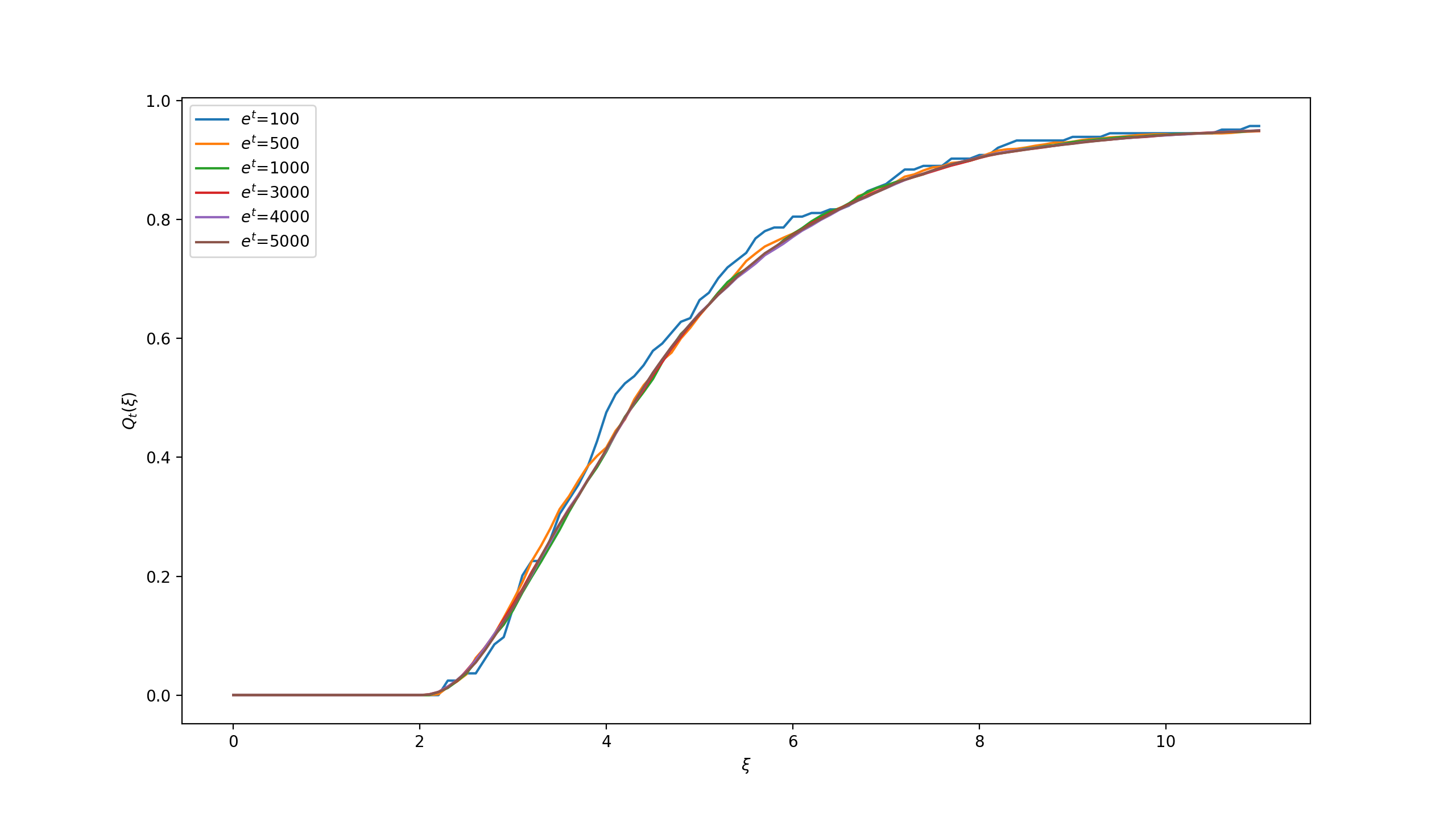}{\caption{The nearest neighbor spacing function $Q_t(\xi)$ for various $t$'s}\label{near}}
\end{figure}
\begin{remark}
Figure \ref{near} suggests that $Q$ should be differentiable.  Unlike the limiting pair correlation, we have not been able to prove the differentiability of $Q$ based on our formula for $Q$.
\end{remark}

Both Theorem \ref{pairthm} and Theorem \ref{nearthm} follow from the convergence of moments (Theorem \ref{07142}), which we explain now.\par
Let $\boldsymbol{\Omega}=\prod_{1\leq i\leq k}\Omega_i\subset \mathbb C^k $, where $\Omega_i,1\leq i\leq k$ are bounded open subsets of $\mathcal{C}$ with piecewise smooth boundaries. \par

For $z\in\mathbb C$, let $$\mathcal B_t(\Omega_i,z):=(e^{-t}\Omega_i+z)\cap\mathcal C_t,$$ and $$\mathcal N_t(\Omega_i,z):=\#\mathcal B_t(\Omega_i,z).$$ 

Let $\bd{r}=\langle r_1,\dots, r_k\rangle, \boldsymbol{\beta}=\langle \beta_1,\dots,\beta_k\rangle$ be multi-indices, where $r_i\in\ZZ_{\geq 0},\beta_i\in \RR_{\geq 0},1\leq i\leq k$, and at least one component of $\bd{r},\boldsymbol{\beta}$ is nonzero.
We want to understand the behaviors of the following two integrals
\al{\label{0819}\int_{\CC}\prod_{1\leq i\leq k}\bd{1}\{\mathcal N_t(\Omega_i,z)=r_i\}\chi_E(z)dz}
and 
\al{\label{0820}\int_{\CC}\prod_{1\leq i\leq k}\mathcal N_t(\Omega_i,z)^{\beta_i}\chi_E(z)dz,}
as $t\rightarrow \infty$, where $\chi_E$ is the characteristic function for an open set $E\subset \mathbb C$ with no boundary or piecewise smooth boundary.  Both \eqref{0819} and \eqref{0820} capture information about the correlation of centers. \par

Define functions $F_{\boldsymbol{\Omega},{\bd{r}}}, F_{\boldsymbol{\Omega}}^{\boldsymbol{\beta}}$ on $G$ by
\al{\label{03581}
&F_{\boldsymbol{\Omega},{\bd{r}}}(g):=\prod_{1\leq i\leq k }\bd{1}\left\{\#( \bd{q}(g^{-1}\Gamma/\Gamma_H)\cap \Omega_i^*)=r_i  \right\},\\
\label{03591}
&F_{\boldsymbol{\Omega}}^{\boldsymbol{\beta}}(g):=\prod_{1\leq i\leq k }\#( \bd{q}(g^{-1}\Gamma/\Gamma_H)\cap \Omega_i^*)^{\beta_i}.
}
We put inverse signs over $g$ in the definitions \eqref{03581} and \eqref{03591} so that both $F_{\boldsymbol{\Omega},\bd{r}}$ and $F_{\boldsymbol{\Omega}}^{\boldsymbol{\beta}}$ are left $\Gamma$-invariant functions and can be thought of as functions on $\Gamma\backslash G$.  \par

The following theorem holds:

\begin{thm}[convergence of moments]\label{07142} With notation as above, we have
\aln{\lim_{t\rightarrow\infty}e^{(2-\delta)t}\int_{\CC}\prod_{1\leq i\leq k}\bd{1}\{\mathcal N_t(\Omega_i,z)=r_i\}\chi_E(z)dz=\frac{m^{\normalfont\br}(F_{\boldsymbol{\Omega},\boldsymbol{r}})w(E)}{m^{\normalfont\bms}(\Gamma\bs G)},
}
and
\aln{\lim_{t\rightarrow\infty}e^{(2-\delta)t}\int_{\CC}\prod_{1\leq i\leq k}\mathcal N_t(\Omega_i,z)^{\beta_i}\chi_E(z)dz=\frac{m^{\normalfont\br}(F_{\boldsymbol{\Omega}}^{\boldsymbol{\beta}})w(E)}{m^{\normalfont\bms}(\Gamma\bs G)}.
}
\end{thm}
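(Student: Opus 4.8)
The plan is to translate the two integrals into integrals of the functions $F_{\boldsymbol{\Omega},\bd{r}}$ and $F_{\boldsymbol{\Omega}}^{\boldsymbol{\beta}}$ along an expanding horosphere in $\Gamma\backslash G$, and then invoke an extension of Mohammadi--Oh's equidistribution theorem for such horospheres.

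First I would record the dictionary between circles and group elements. A circle of $\mathcal P$ is $\gamma(C(0,1))$ for a unique coset $\gamma\Gamma_H\in\Gamma/\Gamma_H$, with center $\Re(\bd{q}(\gamma))$ and radius $\Im(\bd{q}(\gamma))$, so it lies in $\mathcal P_t$ exactly when $\Im(\bd{q}(\gamma))>e^{-t}$. Because every $n_z$ acts on $\overline{\HH^3}$ as a horizontal translation and every $a_s$ as a dilation fixing $0$ and $\infty$, these maps carry hemispheres resting on $\CC$ to hemispheres resting on $\CC$ and send the apex of such a hemisphere to the apex of its image; hence $\bd{q}(a_{-t}n_{-z}\gamma)=a_{-t}n_{-z}\bigl(\bd{q}(\gamma)\bigr)=e^{t}\bigl(\Re(\bd{q}(\gamma))-z\bigr)+e^{t}\Im(\bd{q}(\gamma))\bd{j}$. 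Thus the center of $\gamma(C(0,1))$ lies in $e^{-t}\Omega_i+z$ and its radius exceeds $e^{-t}$ precisely when $\bd{q}(a_{-t}n_{-z}\gamma)\in\Omega_i^{*}$. Combining this with $a_t^{-1}=a_{-t}$ and the right-$H$-invariance of $\bd{q}$ (which makes $\bd{q}$ well defined on $\Gamma/\Gamma_H$), one obtains, for every $z\in\CC$,
\[
\prod_{1\le i\le k}\bd{1}\{\mathcal N_t(\Omega_i,z)=r_i\}=F_{\boldsymbol{\Omega},\bd{r}}(n_za_t),\qquad
\prod_{1\le i\le k}\mathcal N_t(\Omega_i,z)^{\beta_i}=F_{\boldsymbol{\Omega}}^{\boldsymbol{\beta}}(n_za_t).
\]
Since $F_{\boldsymbol{\Omega},\bd{r}}$ and $F_{\boldsymbol{\Omega}}^{\boldsymbol{\beta}}$ are left $\Gamma$-invariant, they descend to $\Gamma\backslash G$, and both left-hand sides in Theorem \ref{07142} become $e^{(2-\delta)t}\int_{\CC}F(n_za_t)\chi_E(z)\,dz$: the integral of a fixed function $F$ on $\Gamma\backslash G$ along the expanding horosphere $Na_t$, cut down by $\chi_E$ to the piece lying over $E$.

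Next I would bring in the ergodic input. In the form needed here, Mohammadi--Oh's theorem states that for $\psi\in C_c(\Gamma\backslash G)$,
\[
e^{(2-\delta)t}\int_{\CC}\psi(n_za_t)\chi_E(z)\,dz\;\longrightarrow\;\frac{w(E)\,m^{\br}(\psi)}{m^{\bms}(\Gamma\backslash G)},
\]
the weight $w(E)$ encoding that only the part of $E$ meeting $\overline{\mathcal P}$ contributes. Were $F_{\boldsymbol{\Omega},\bd{r}}$ and $F_{\boldsymbol{\Omega}}^{\boldsymbol{\beta}}$ continuous with compact support in $\Gamma\backslash G$, the theorem would be immediate. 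They are not: $F$ is discontinuous along $\partial\Omega_i^{*}$; it is supported on a non-compact subset of $\Gamma\backslash G$ because the chimneys $\Omega_i^{*}$ have infinite height; and $F_{\boldsymbol{\Omega}}^{\boldsymbol{\beta}}$ is moreover unbounded. Upgrading the equidistribution to such $F$ is the real work, and I would organize it in three parts. \emph{(a) Boundary.} As $\partial\Omega_i$ is empty or piecewise smooth, the discontinuity locus of $F$ is null for $m^{\br}$ and for the conformal measures carried by the expanding horospheres, so squeezing $F$ between continuous functions that agree with it off an $\epsilon$-neighborhood of $\bigcup_i\partial\Omega_i^{*}$ and letting $\epsilon\to 0$ costs nothing in the limit. \emph{(b) Cusps.} Using geometric finiteness of $\Gamma$ together with the known estimates for $m^{\bms}$ and $m^{\br}$ in the horoball neighborhoods of the bounded parabolic fixed points, I would show that the contribution of the $\eta$-thin part of $\Gamma\backslash G$, both to $m^{\br}(F)$ and to $e^{(2-\delta)t}\int_{\CC}F(n_za_t)\chi_E(z)\,dz$, is $O(\eta^{\kappa})$ uniformly in $t$. \emph{(c) Tails.} A non-concentration bound of the form $\int_{\CC}\bd{1}\{\mathcal N_t(\Omega_i,z)\ge M\}\chi_E(z)\,dz\ll_{\boldsymbol{\Omega},E}M^{-1}e^{-(2-\delta)t}$, uniform in $t$, should follow from the Oh--Shah count $\#(\mathcal C_t\cap B)\asymp e^{\delta t}$ by a covering argument; this permits replacing each $\mathcal N_t(\Omega_i,z)$ by $\mathcal N_t(\Omega_i,z)\wedge M$ with an error small uniformly in $t$, and the same bound against $m^{\br}$ gives $m^{\br}(F_{\boldsymbol{\Omega}}^{\boldsymbol{\beta}})<\infty$, so the asserted limit is finite. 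With (a)--(c) in hand, $F$ is reduced to a bounded function supported in a fixed compact subset of $\Gamma\backslash G$; I would then sandwich it between $C_c$ functions, apply the displayed equidistribution statement to each, pass to the limit in $t$, and finally let $\epsilon\to 0$, $\eta\to 0$ and $M\to\infty$.

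I expect the uniform-in-$t$ non-escape and non-concentration estimates near the cusps --- parts (b) and (c) --- to be the main obstacle: they are precisely what goes beyond Mohammadi--Oh, and they require control of the Patterson--Sullivan, Bowen--Margulis--Sullivan and Burger--Roblin measures in neighborhoods of the parabolic fixed points of a geometrically finite $\Gamma$, which is the technical heart that the preceding sections are built to supply.
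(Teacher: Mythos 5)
Your reduction to a horospherical integral of $F_{\boldsymbol{\Omega},\bd{r}}$ and $F_{\boldsymbol{\Omega}}^{\boldsymbol{\beta}}$ against $\chi_E$ is the same as the paper's (Section~\ref{setup}), and the identification of the three obstacles (discontinuities, non-compact support, possible unboundedness) is a sensible diagnosis. However, the way you propose to close the argument diverges from what the paper actually does, and in two respects the proposal as written would not go through.

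First, your part~(c) is based on a false premise. The counting functions $\mathcal N_t(\Omega_i,z)$ are uniformly bounded in $t$ and $z$: two distinct circles of $\mathcal P_t$ are disjoint or tangent, hence of radius $>e^{-t}$ they have centers at Euclidean distance $\ge 2e^{-t}$ (the observation \eqref{2227}), and a packing argument gives $\mathcal N_t(\Omega_i,z)\le(\mathcal D(\Omega_i)+1)^2$ (inequality \eqref{1440}). Thus $F_{\boldsymbol{\Omega}}^{\boldsymbol{\beta}}$ is bounded and is a finite linear combination of the $F_{\boldsymbol{\Omega},\bd{r}}$, so no tail truncation is needed and $m^{\br}(F_{\boldsymbol{\Omega}}^{\boldsymbol{\beta}})<\infty$ follows immediately once $m^{\br}(F_{\boldsymbol{\Omega},\bd{r}})<\infty$ is known. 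You should have spotted this elementary geometric fact; it materially changes the shape of the argument.

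Second, and more seriously, your part~(b) — a uniform-in-$t$ non-escape estimate in the $\eta$-thin part via explicit control of $m^{\br}$ and $m^{\bms}$ near parabolic fixed points — is precisely the kind of quantitative input the paper does \emph{not} have and explicitly says it could not obtain (see the remark following Theorem~\ref{07141}: even extending Theorem~\ref{03211} to Schwartz-class $\Psi$ was only partial). The paper circumvents this entirely by a structural trick: it identifies one concrete dominating pair $(f_0,\Psi_0)$ with $\Psi_0(g)=\sum_{\gamma\in\Gamma/\Gamma_H}\bd{1}\{\bd{q}(g^{-1}\gamma)\in\Omega^*\}$, verifies directly that $(f_0,\Psi_0)\in\mathcal W$ by comparing both sides of \eqref{1258} to the Oh--Shah circle-count $N(E,t)\sim e^{\delta t}\,\mu_{H_1}^{\ps}(\Gamma_H\bs H_1)w(E)/(\delta\, m^{\bms})$ (Theorem~\ref{circlecounting}), and then proves a monotone domination principle (Proposition~\ref{1426}): if $(f,\Psi)\in\mathcal W$ with $\Psi$ bounded, nonnegative, of $m^{\br}$-null discontinuity closure, then any Borel $0\le\widetilde\Psi\le\Psi$ with $m^{\br}$-null discontinuity closure also lies in $\mathcal W$ paired with $f$. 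Because $F_{\boldsymbol{\Omega},\bd{r}}\le\Psi_0$ by \eqref{1437}, and the discontinuity sets are $m^{\br}$-null (via the properness Lemma~\ref{1537}), this yields Theorem~\ref{07141} and hence Theorem~\ref{07142} without any cusp analysis. In short: rather than proving uniform non-concentration near the cusps from scratch, the paper imports the required non-escape of mass wholesale from Oh--Shah's counting theorem through a carefully chosen comparison function. If you insist on your route (b), you will have to carry out cusp estimates that are not in the paper and that the author reports were out of reach; adopting the paper's domination principle is both the shorter path and the intended one.
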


\subsection{An overview of the method}
To prove Theorem \ref{07142}. we first turn the integrals \eqref{0819} and \eqref{0820} into forms that fit into Mohammadi-Oh's theorem on the equidistribution of expanding horospheres (Theorem \ref{03211}). Here in particular, for our convenience we use the $HAN$ and $NAH$ decompositions for $G$.  Here $H, A, N$ are certain subgroups of $G$ (see Section \ref{hyperbolic} for the definitions of $H,A$ and $N$). These decompositions seem new to us and we name them \emph{the generalized Iwasawa decompositions}.  
\begin{thm}[Mohammadi-Oh, Theorem 1.7, \cite{MO15}]\label{03211} Suppose $\Gamma<G$ is geometrically finite.  Suppose $\Gamma\backslash \Gamma N$ is closed in $\Gamma\backslash G$ and $|\mu_{N}^{\text{PS}}|<\infty$.  For any $\Psi\in C_c^{\infty}(\Gamma\backslash G)$ and any $f\in C^{\infty}(\Gamma\backslash \Gamma{N})$, we have
\al{\label{1258}\lim_{t\rightarrow\infty}e^{(2-\delta)t}\int_{\Gamma\backslash\Gamma N}\Psi(na_t)f(n)d\mu_{N}^{\normalfont \text{Leb}}(n)=\frac{m^{\rm{BR}}(\Psi)\mu_{N}^{\rm{PS}}(f)}{m^{\rm{BMS}}(\Gamma\backslash G) }.}
\end{thm}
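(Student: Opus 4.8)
The plan is to deduce \eqref{1258} from the mixing of the Bowen--Margulis--Sullivan flow on $\Gamma\bs G$ by the thickening (flow-box) method going back to Margulis, carried out with respect to the infinite measures of the geometrically finite setting as in the work of Roblin and of Oh--Shah. \emph{Step 1 (the ergodic input).} The engine is the statement that the one-parameter flow $x\mapsto xa_t$ is mixing on $(\Gamma\bs G, m^{\bms})$:
\[
\lim_{t\to\infty}\int_{\Gamma\bs G}\Phi_1(xa_t)\,\Phi_2(x)\,dm^{\bms}(x)=\frac{m^{\bms}(\Phi_1)\,m^{\bms}(\Phi_2)}{m^{\bms}(\Gamma\bs G)},\qquad \Phi_1,\Phi_2\in C_c(\Gamma\bs G).
\]
This presupposes $|m^{\bms}|<\infty$, which for geometrically finite $\Gamma$ is guaranteed by the standing hypotheses ($\Gamma\bs\Gamma N$ closed and $|\mu_N^{\ps}|<\infty$) via the structure theory of geometrically finite groups. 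Mixing of the geodesic flow on $T^1(\Gamma\bs\HH^3)=\Gamma\bs G/M$ at this level of generality is due to Babillot, Rudolph and Roblin; the promotion to mixing of the frame flow on $\Gamma\bs G$ itself uses the Zariski density of $\Gamma$ in $G$ and is due to Winter. This is taken as a black box.

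\emph{Step 2 (thickening across the unstable foliation).} Let $A=\{a_t\}$, $M=Z_K(A)$, and let $N^-$ be the horospherical subgroup opposite to $N$; the map $(v,a_s,m)\mapsto v\,a_s\,m$ identifies a transversal $T\cong N^-\times A\times M$ to the foliation of $\Gamma\bs G$ by $N$-orbits, and in the corresponding flow-box coordinates one has the disintegrations
\[
dm^{\bms}=d\mu_N^{\ps}\otimes d\nu^{\ps},\qquad dm^{\br}=d\mu_N^{\text{Leb}}\otimes d\nu^{\ps},\qquad d\nu^{\ps}:=d\mu_{N^-}^{\ps}\,ds\,dm,
\]
so that $m^{\br}$ is Lebesgue along unstable leaves and Patterson--Sullivan across them. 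Fix $\Psi\in C_c^\infty(\Gamma\bs G)$, $f\in C^\infty(\Gamma\bs\Gamma N)$ and $\epsilon>0$, cover $\operatorname{supp}\Psi$ by finitely many small flow boxes, and split $\int_{\Gamma\bs\Gamma N}\Psi(na_t)f(n)\,d\mu_N^{\text{Leb}}(n)$ accordingly. Since $a_t$ expands each unstable leaf by the factor $e^t$ in each of the $\dim N=2$ directions while $M$ and $A$ centralize $a_t$ and $a_{-t}N^-a_t\to e$, the $a_t$-image of a chunk of $\Gamma\bs\Gamma N$ enters a fixed flow box as a disjoint union of almost-full unstable plaques $B_N\times\{y_\alpha\}$, $y_\alpha\in T$, whose multiplicities and base points are dictated by the $a_t$-dynamics; on each plaque $\Psi$ and $f$ are constant up to $O(\epsilon)$ by uniform continuity. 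The renormalization $e^{(2-\delta)t}=e^{(\dim N-\delta)t}$ is precisely the discrepancy between the exponent $\dim N=2$ governing the horocyclic (Lebesgue) expansion under $a_t$ and the exponent $\delta$ governing the Patterson--Sullivan density; it is this mismatch that converts the \emph{Lebesgue} weight $d\mu_N^{\text{Leb}}$ of the horospherical integral into the \emph{Lebesgue} weight of $m^{\br}$ along unstable leaves, the transversal data seeing only Patterson--Sullivan densities.

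\emph{Step 3 (equidistribution of plaques and conclusion).} It remains to show that, after renormalization and as $t\to\infty$, the plaque base points $\{y_\alpha\}\subset T$ (counted with their $a_t$-weights) equidistribute on $T$ with respect to $\nu^{\ps}$, normalized so that the total mass produced is $m^{\bms}(\Gamma\bs G)^{-1}\mu_N^{\ps}(f)$. This is exactly where the mixing of Step 1 enters: equidistribution of the translated horospherical plaques against the transversal Patterson--Sullivan measure is the flow-box incarnation of $m^{\bms}$-mixing under $a_t$, and it is at this point that the constants $\mu_N^{\ps}(f)$ and $m^{\bms}(\Gamma\bs G)$ appear. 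Summing over the finite cover of $\operatorname{supp}\Psi$ and letting $t\to\infty$ and then $\epsilon\to 0$ yields
\[
\lim_{t\to\infty}e^{(2-\delta)t}\int_{\Gamma\bs\Gamma N}\Psi(na_t)f(n)\,d\mu_N^{\text{Leb}}(n)=\frac{m^{\br}(\Psi)\,\mu_N^{\ps}(f)}{m^{\bms}(\Gamma\bs G)},
\]
which is \eqref{1258}.

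\emph{The main obstacle.} The serious difficulty is that $\Gamma\bs\Gamma N$ is typically non-compact and $f$ need not be compactly supported, while the injectivity radius of $\Gamma\bs G$ degenerates in the cusps, so the flow-box decomposition is not literally valid over the whole horosphere. One must prove a non-escape-of-mass estimate: for every $\eta>0$ there is a compact $\mathcal Q\subset\Gamma\bs G$ with $e^{(2-\delta)t}\int_{\{n\,:\,\Gamma na_t\notin\mathcal Q\}}|f|\,d\mu_N^{\text{Leb}}<\eta$ for all large $t$. This is the technical heart; it relies on geometric finiteness (the cusps have bounded rank and explicitly shaped horoballs), on $|\mu_N^{\ps}|<\infty$ (bounding the Patterson--Sullivan mass deep inside a cusp), and on a quantitative-recurrence/return-time analysis for the horospherical action near the parabolic fixed points. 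Granting it, one truncates $f$ and $\Psi$ to a large compact set, runs the flow-box argument of Steps 2--3 there, and passes to the limit.
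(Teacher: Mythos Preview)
The paper does not prove this statement at all: Theorem~\ref{03211} is quoted verbatim from Mohammadi--Oh \cite{MO15} (and restated as Theorem~\ref{0321}) and is used throughout as a black box, so there is no ``paper's own proof'' to compare against. The paper's original work begins only where \cite{MO15} leaves off, namely in extending \eqref{1258} to the non-smooth, non-compactly-supported test functions $F_{\boldsymbol{\Omega},\bd r}$ and $F_{\boldsymbol{\Omega}}^{\boldsymbol\beta}$ via Proposition~\ref{1426}.

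That said, your sketch is a faithful high-level outline of how the result is actually proved in \cite{MO15} (and, before that, in Roblin and in Oh--Shah \cite{OS13}): mixing of the $a_t$-flow with respect to $m^{\bms}$, a flow-box/thickening argument that converts Lebesgue mass along unstable leaves into $m^{\br}$, and a non-escape-of-mass estimate near the cusps to handle the non-compactness of $\Gamma\bs\Gamma N$. Two minor corrections: (i) finiteness of $m^{\bms}$ follows from geometric finiteness of $\Gamma$ alone (Sullivan; see the remark at the end of \S\ref{0129}), not from the hypotheses on $\Gamma\bs\Gamma N$ and $|\mu_N^{\ps}|$; those extra hypotheses are what make the \emph{horospherical} side of \eqref{1258} well-posed. (ii) You are right that frame-flow mixing on $\Gamma\bs G$ (Winter), rather than merely geodesic-flow mixing on $T^1(\Gamma\bs\HH^3)$, is the relevant ergodic input, since \eqref{1258} is stated on $\Gamma\bs G$.
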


However, Theorem \ref{03211} can not be directly applied, because in the statement of Theorem \ref{03211},  the test function $\Psi$ is assumed to be compactly supported and smooth, while in our situation, $\Psi$ is $F_{\boldsymbol{\Omega},\boldsymbol{r}}$ or $F_{\boldsymbol{\Omega}}^{\boldsymbol{\beta}}$, which are neither continuous nor compactly supported.  The smoothness condition for $f$ and $\Psi$ is for the purpose of obtaining a version of equidistribution with exponential convergence rate.  This is not needed for our purpose, as we only pursue asymptotics.  By the same method from \cite{OS13}, the restriction for $f$ can be relaxed to be in $L^1(\Gamma\bs \Gamma N)$ together with some mild regularity assumption, and $\Psi$ can be relaxed to be continuous and compactly supported; but this is still not enough for our purpose.  We circumvent this technical difficulty by proving Proposition \ref{1426}, illustrating some hierarchy structure in the space $\mathcal W$ of pairs of test functions $(f,\Psi)$ where the conclusion of Theorem \ref{03211} holds.

Theorem \ref{0827} implies that certain pairs $(f_0,\Psi_0)$ related to counting circles are in the space $\mathcal W$.  An elementary geometric argument shows that $F_{\boldsymbol{\Omega}, \boldsymbol{r}},F_{\boldsymbol{\Omega}}^{\boldsymbol{\beta}}$ are dominated by $\Psi_0$.  This together with Proposition \ref{1426} give us the desired Theorem \ref{07141}, which is an extension of Theorem \ref{03211}. \par

\begin{thm}\label{07141} Let $\Gamma< PSL(2,\CC)$ be a discrete group with the limit set $\Lambda(\Gamma)=\overline{\mathcal{P}}$ and acting transitively on the circles from $\mathcal{P}$.  Let $\Psi=F_{\bd{\Omega},\bd{r}}$ or $F_{\boldsymbol{\Omega}}^{\boldsymbol{\beta}}$, where $F_{\bd{\Omega},\bd{r}}$ and $F_{\boldsymbol{\Omega}}^{\boldsymbol{\beta}}$ are defined by \eqref{03581} and \eqref{03591}. Then $m^{\normalfont\br}(\Psi)<\infty$, and
\al{\lim_{t\rightarrow\infty}e^{(2-\delta)t}\int_{\CC}\chi_E(z)\Psi(n_za_t)dz=\frac{m^{\normalfont\br}(\Psi)w(E)}{m^{\normalfont\bms}(\Gamma\bs G)}
.}
\end{thm}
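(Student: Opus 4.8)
The plan is to read the integral $\int_{\CC}\chi_E(z)\Psi(n_za_t)\,dz$ as a special case of the horospherical integral in Theorem~\ref{03211} and then strip away the regularity hypotheses on the two test functions by a domination argument. Since $\overline{\mathcal P}=\Lambda(\Gamma)$ is a bounded subset of $\CC$ we have $\infty\notin\Lambda(\Gamma)$, hence $\Gamma\cap N=\{e\}$, the map $z\mapsto\Gamma n_z$ identifies $\CC$ with $\Gamma\bs\Gamma N$, the horosphere $\Gamma\bs\Gamma N$ is closed in $\Gamma\bs G$, and $\mu_N^{\ps}$ (supported on $\{n_z:z\in\overline{\mathcal P}\}$) is finite; so the hypotheses of Theorem~\ref{03211} are met. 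Under this identification the integral becomes $\int_{\Gamma\bs\Gamma N}\Psi(na_t)\chi_E(n)\,d\mu_{N}^{\text{Leb}}(n)$, and $\Psi=F_{\boldsymbol{\Omega},\boldsymbol{r}}$ or $F_{\boldsymbol{\Omega}}^{\boldsymbol{\beta}}$ is a genuine left-$\Gamma$-invariant function on $\Gamma\bs G$ by construction. Thus Theorem~\ref{07141} is precisely the conclusion of Theorem~\ref{03211} for the pair $(\chi_E,\Psi)$; the obstruction is that $\chi_E$ is not smooth and $\Psi$ is neither continuous nor compactly supported.

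I would dispose of $f=\chi_E$ first. As $\partial E$ is empty or piecewise smooth it is null for Lebesgue measure on $\CC$ and for the conformal measure $w$, so $\chi_E$ can be squeezed between smooth compactly supported $f^-\le\chi_E\le f^+$ of arbitrarily close $w$- and Lebesgue-mass; together with the relaxation of the $f$-variable to $L^1$ with mild regularity (by the method of~\cite{OS13}), it then suffices to treat continuous $f$. The substantive point is the $\Psi$-variable, for which I would work with the class $\mathcal{W}$ of pairs $(f,\Psi)$ satisfying the conclusion of Theorem~\ref{03211} and invoke Proposition~\ref{1426}: roughly, $\mathcal{W}$ is stable under replacing a member $\Psi_0$ by a uniformly bounded $\Psi$ with $0\le\Psi\le\Psi_0$ whose set of discontinuities is $m^{\br}$-null. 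Two inputs make this apply. First, an elementary packing estimate: the circles of $\mathcal P$ of radius $>e^{-t}$ have pairwise disjoint interiors, so at most $O_{\Omega_i}(1)$ of them have centre in a ball of radius $\asymp e^{-t}$; hence $\mathcal N_t(\Omega_i,z)=O_{\Omega_i}(1)$ uniformly in $t$ and $z$ (and likewise $\#(\bd q(g^{-1}\Gamma/\Gamma_H)\cap\Omega_i^*)=O_{\Omega_i}(1)$ for all $g$, since a M\"obius image of an Apollonian packing has all but one circle with disjoint interiors). Because some component of $\boldsymbol r$, resp.\ $\boldsymbol\beta$, is nonzero, $\Psi$ is supported where $\#(\bd q(g^{-1}\Gamma/\Gamma_H)\cap\Omega_{i_0}^*)\ge1$ for some fixed $i_0$; combining with the uniform bound gives $\Psi\le C_{\boldsymbol\Omega}\cdot\#(\bd q(g^{-1}\Gamma/\Gamma_H)\cap W^*)=:\Psi_0$ for a large box $W\supseteq\bigcup_i\Omega_i$. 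Second, Oh--Shah's Theorem~\ref{0827}, together with the finiteness of the measure $v$ produced there, is equivalent --- by unfolding the count $N(E,t)$ along the expanding horosphere exactly as in the Eskin--McMullen argument --- to the statement that $(f_0,\Psi_0)\in\mathcal{W}$ for all reasonable $f_0$, and in particular $m^{\br}(\Psi_0)<\infty$.

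It then remains to check that the discontinuity set of $F_{\boldsymbol{\Omega},\boldsymbol{r}}$, resp.\ $F_{\boldsymbol{\Omega}}^{\boldsymbol{\beta}}$, is $m^{\br}$-null: a discontinuity at $g$ forces some circle of $g^{-1}\mathcal P$ to have apex lying over $\partial\Omega_i$ or at height exactly $1$, a countable union of conditions transverse to the unstable $N$-direction and hence null for the Lebesgue factor of $m^{\br}$. Feeding $0\le\Psi\le\Psi_0$, the uniform bound on $\Psi$, the $m^{\br}$-null discontinuity set, and $(f_0,\Psi_0)\in\mathcal{W}$ into Proposition~\ref{1426} then yields $m^{\br}(\Psi)<\infty$ together with the claimed limit $\frac{m^{\br}(\Psi)w(E)}{m^{\bms}(\Gamma\bs G)}$. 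I expect the real work to be Proposition~\ref{1426} itself --- building the correct hierarchy on $\mathcal{W}$ so that bounded, merely $m^{\br}$-almost-everywhere continuous, non-compactly-supported test functions are admissible, which requires controlling the horospherical integral uniformly near the cusps of $\Gamma\bs G$ and over the part of $\Gamma\bs G$ not charged by $m^{\br}$ --- along with the careful identification of Oh--Shah's counting theorem with the membership $(f_0,\Psi_0)\in\mathcal{W}$, rather than merely with control of the integral of $\Psi_0$ against one specific test function.
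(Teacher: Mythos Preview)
Your proposal is correct and follows essentially the same route as the paper: the packing bound dominates $\Psi$ by the circle-counting function $\Psi_0$, Oh--Shah's theorem is unfolded to give $(\chi_E,\Psi_0)\in\mathcal W$, and Proposition~\ref{1426} then descends to $(\chi_E,\Psi)$. The one point you underplay is that Proposition~\ref{1426} requires $m^{\br}\bigl(\overline{\mathcal{Disc}(\Psi)}\bigr)=0$ (the \emph{closure}, not just the set), and your ``countable union of transverse conditions'' does not by itself rule out accumulation in $\Gamma\bs G$; the paper secures this via a properness lemma (Lemma~\ref{1537}) showing that the image $\Gamma\bs\Gamma\mathcal M_{\boldsymbol\Omega}$ is already closed.
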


Theorem \ref{07142} then follows from Theorem \ref{07141}.
\par

\begin{remark}\raggedbottom
It is desirable to prove a version of Theorem \ref{03211} only assuming the integrality of $\Psi$ over the Burger-Roblin measure  plus some mild restriction.  While it is an exercise to relax the compactly-supported assumption to being in $L^1$ when the hyperbolic space has finite volume, such an extension seems much less obvious (at least to the author) if the space has infinite volume.  We have made partial progress (say, $\Psi$ can be in the Schwartz space) but haven't been able to achieve sufficient generality to encompass Theorem \ref{07141}.    
\end{remark}

\subsection{A historical note}
Pair correlation as well as other spatial statistics have been widely used in various disciplines such as physics and biology. For instance, in microscopic physics, the Kirkwood-Buff Solution Theory \cite{BK51} links the pair correlation function of gas molecules, which encodes the microscopic details of the distribution of these molecules to some macroscopic thermodynamical properties of the gas such as pressure and potential energy. In macroscopic physics, cosmologists use pair correlations to study the distribution of stars and galaxies.  \par
Within mathematics, there is also a rich literature on the spatial statistics of point processes arising from various settings, such as Riemann zeta zeros \cite{Mo73}, fractional parts of $\{\sqrt{n},n\in\ZZ^+\}$ \cite{EM04}, directions of lattice points \cite{BZ06}, \cite{BPZ14}, \cite{KK15}, \cite{RS14}, \cite{MV14}, \cite{BMV15}, Farey sequences and their generalizations \cite{Ha70}, \cite{BCZ01}, \cite{ABC01}, \cite{RZ15}, \cite{ACZ15}, \cite{ACMZ15}, and translation surfaces \cite{AC10}, \cite{ACL15}, \cite{UW16}.  Our list of interesting works here is far from inclusive.  These statistics can contain rich information and yield surprising discoveries.  For instance, Montgomery and Dyson's famous discovery that the pair correlation of Riemann zeta zeros agrees with that of the eigenvalues of random Hermitian matrices, bridges analytic number theory and high energy physics. \par
There is a major difference between all works mentioned above and our investigation of circles here. In the above works, the underlying point sequences are uniformly distributed in their ``ambient'' spaces.  In our case, the set of centers is fractal in nature: it is not dense in any reasonable ambient space such as $B_1$, the disk centered at 0 and of radius 1.  Consequently, we need different normalizations of parameters. \par
In some of the works above, the problems were eventually reduced to the equidistribution of expanding horospheres in finite volume hyperbolic spaces. In our case, we need an infinite volume version of this dynamical fact, which is Theorem \ref{03211}, as well as to take care of certain emerging issues in the infinite volume situation.  The main contribution of this paper, in the eyes of the author, is to introduce the recently rapidly developed theory of thin groups to study the fine scale structures of fractals, by displaying a thorough investigation of the well known Apollonian gaskets.  \par 

\subsection{The structure of the paper} Section \ref{hyperbolic} gives some basic background in hyperbolic geometry.  In Section \ref{setup} we set up the problem and reduce proving Theorem \ref{07142} to proving Theorem \ref{07141}.  In Section \ref{0129} we give a detailed discussion of some emerging conformal measures built up from the Patterson-Sullivan densities.  We finish the proof of Theorem \ref{07141} in Section \ref{equidstribution}.  Finally in Section \ref{pairnear} we explain how to deduce Theorem \ref{pairthm} and Theorem \ref{nearthm} from Theorem \ref{07141}.  We give complete detail for the limiting pair correlation; the limiting nearest neighbor spacing can be deduced in an analogous way and we sketch the proof.  

\subsection{Notation} We use the following standard notation.  The expressions $f\ll g$ and $f=O(g)$ are synonymous, and $f\asymp g$ means $f\ll g$ and $g\ll f$.  Unless otherwise specified, all the implied constants depend at most on the symmetry group $\Gamma$. The symbol $\bd{1}\{\cdot\}$ is the indicator function of the event $\{\cdot\}$. For a finite set $\mathcal S$, we denote the cardinality of $\mathcal S$ by $\#\mathcal S$.

\subsection{Ackowledgement}
Figures 3-7 were produced in a research project of Illinois Geometry Lab (IGL) \cite{CJKMZ}, where Weiru Chen, Calvin Kessler and Mo Jiao were the undergraduate investigators, Amita Malik was the graduate mentor, and the author of this paper was the faculty mentor.  Although we didn't use the results from \cite{MV14} directly, that paper together with the data produced from the IGL project gave us the main inspiration of this paper.  The technique employed in this paper is mainly from \cite{Wi15}, \cite{OS13}, \cite{MO15}. Thanks are also due to Prof. Curt McMullen for his enlightening comments and corrections.  
  
\section{Hyperbolic 3-space and groups of isometries}\label{hyperbolic}
We use the upper half-space model for the hyperbolic 3-space $\mathbb H^3$:
$$\mathbb H^3=\{x+y\bd{i}+r\bd{j}: x+y\bd{i}\in \mathbb C, r\in\mathbb R\}.$$  
The boundary $\partial \mathbb H^3$ of $\HH^3$ is identified with $\mathbb C\cup \{\infty\}$. \par

The hyperbolic metric and the volume form on $\mathbb{H}^3$ are given by
\aln{ds^2&=\frac{dx^2+dy^2+dr^2}{r^2},\\
dV&=\frac{dxdydr}{r^3}.}

Let $G=PSL(2,\CC)$ be the group of orientation-preserving isometries of $\HH^3$, and let $e$ be the identity element of $G$.  The action of $G$ on $\mathbb H^3$ is given explicitly as the following:

$$\mat{a&b\\c&d}(z+r\bd{j})= \frac{a\bar{c}|z|^2+a\bar{d}z+b\bar{c}\bar{z}+b\bar{d}+r^2a\bar{c}}{|cz+d|^2+r^2|c|^2}+\frac{r}{|cz+d|^2+r^2|c|^2}\bd{j}.$$
For any two points $q_1,q_2\in\HH^3$, the formula for their hyperbolic distance $d(q_1,q_2)$ is 
\al{\label{0518} d(q_1,q_2)=\text{Arccosh}\left( 1+ \frac{|q_1-q_2|^2}{2\Im(q_1)\Im(q_2)}\right),}
where $|q_1-q_2|$ is the Euclidean distance between $q_1$ and $q_2$.\par

Let $\pi_1,\pi_2$ be the maps from $G$ to $\text{T}^1(\HH^3),\HH^3$ defined by 
\aln{
&\pi_1(g):=g(X_1),\\
&\pi_2(g):=g(\jj).
}

The following subgroups of $G$ will appear in our analysis:

\begin{enumerate}[(i)]
\item $A=:\left\{a_t=\mat{e^{-\frac{t}{2}}&0\\0&e^{\frac{t}{2}}}:t\in\RR \right\}$.   
\item $K=:PSU(2)=\left\{\mat{a&b\\\bar{b}&\bar{a}}|a|^2+|b|^2 =1\right\}$.
\item $M=:\left\{ m_\theta=\mat{e^{\frac{\theta}{2}\bd{i}}&0\\0&e^{-\frac{\theta}{2}\bd{i}}}: \theta\in[0,2\pi) \right\}$.
\item $N=:\left\{ {n_z}=\mat{1&z\\0&1}: z\in\CC \right\}$.
%\item $N^{-}=:\left\{ n_z^{-}=\mat{1&0\\z&1}: z\in\CC \right\}$
\item $H:=SU(1,1)\cup SU(1,1)\mat{0&-1\\1&0}$, where \newline $SU(1,1)=\left\{\mat{\xi&\eta\\\bar{\eta}&\bar{\xi}}:\xi,\eta\in\CC, |\xi|^2-|\eta|^2=1 \right\}$.
\item $H_0:= SU(1,1)$, the identity component of $H$.
\item $\widetilde{A}=:\left\{\widetilde{a}_t=\mat{\cosh\frac{t}{2}&\sinh\frac{t}{2}\\\sinh\frac{t}{2}&\cosh \frac{t}{2}}:t\in\RR \right\}$.
\end{enumerate}\par
 
We now explain the geometric meaning of the above groups.  Let $\{X_1,X_2,X_3\}$ be an \emph{orthonomal frame} based at $\jj$,  where $X_1,X_2,X_3$ are unit vectors based at $\bd{j}$ pointing to the negative $r$ direction, positive $y$ direction, and the positive $x$ direction, respectively.  Let $S\subset \HH^3$ be the hyperbolic geodesic plane with boundary $\partial S=C(0,1)$, where $C(z,R)\subset \CC$ is the circle centered at $z$ with radius $R$.  The group $G$ can also be identified with the orthonormal frame bundle on $\HH^3$.  The flows $\{a_t(X_1):t\in\RR\},\{\widetilde{a}_t(X_3):t\in\RR\}$ are the geodesic flows containing $X_1,X_3$, respectively.  The group $H$ is the stabilizer of the geodesic plane $S$, $K$ is the stabilizer of $\jj$, and $M$ is the stabilizer of $X_1$.  The orbit $N(X_1)$ is the expanding horosphere containing $X_1$.  \par

In our analysis we adopt the following decomposition for $G$ which are particularly convenient for us:
\aln{
G=NAH; G=HAN.
}
We call these decompositions \emph{the generalized Iwasawa decompositions}.  \par
We further decompose the group $H$ via the Cartan decomposition: 
\al{\label{0532}
H=M\left(\widetilde{A}^{+}\cup \widetilde{A}^{+}\mat{0&-1\\1&0}\right)M,
}
where $$\widetilde{A}^{+}=:\left\{\widetilde{a}_t=\mat{\cosh\frac{t}{2}&\sinh\frac{t}{2}\\\sinh\frac{t}{2}&\cosh \frac{t}{2}}:t\in(0,\infty) \right\}.$$  
                                                                                                                                                                                                                                                  
For every $h\in H-M\cup \mat{0&-1\\1&0}M$, we can write $h=m_1am_2$ with $m_1,m_2\in M$ and $a\in \widetilde{A}^{+}\cup \widetilde{A}^{+}\mat{0&-1\\1&0}$ in a unique way. \par

Now we show that the generalized Iwasawa decompositions parametrize $G$ except for a codimension one subvariety.  We first consider $G=NAH$. Let $V$ be the set of all horizontal vectors and vertical vectors in $\text{T}^1(\HH^3)$, where a horizontal (vertical) vector is a vector parallel (perpendicular) to $\CC$ in the Euclidean sense.  Let $G_V=\{g\in G: g(X_1)\in V\}$.  We claim the product map $\rho_1$:
\al{\nonumber
\label{1220}&N\times A\times M\times  \left(\widetilde{A}\cup\widetilde{A}\mat{0&-1\\1&0}\right)\times M\longrightarrow G-G_{V}: \\&\rho_1(n,a,m_1,\widetilde{A},m_2):= nam_1\widetilde{A}m_2
}      
is a homeomorphism. \par

Indeed, we notice first that the map $\pi_2\circ\rho$ on the set $$L^1:=\{e\}\times\{e\}\times M\times \left(\widetilde{A}\cup \widetilde{A}\mat{0&-1\\1&0}\right)\times\{e\}$$ gives an identification of $L^1$ with all non-vertical vectors in the unit normal bundle $\text{N}^1(S)$. For any vector $u\in \text{T}^1(\HH)-V$, we can find unique elements $m_1\in M,\widetilde{a}\in\widetilde{A}$ such that $m_1\widetilde{a}(X_1)$ and $u$ point to the same Euclidean direction.  Next we can find a unique element $a\in A$ such that $am_1\widetilde{a}(X_1)$ and $u$ are based in the same horizontal plane.  After that, we can find a unique element $n\in N$ so that  $nam_1\widetilde{a}(X_1)$ and $u$ are based at the same point.  We observe that the actions of $N,A$ on $\text{T}^1(\HH^3)$ preserve Euclidean directions.  Thus we have $nam_1\widetilde{a}(X_1)=u$.  The group $M$ preserves $X_1$, and acts transitively and faithfully on all vectors in $T_e^1(\HH^3)$ normal to $X_1$, so $M$ can be identified with all orthonormal frames based at $\bd{j}$ with the first reference vector $X_1$.  As a result, choosing a unique $m_2\in M$ for the rightmost factor $M$ on the left hand side of \eqref{1220} , we can take the frame $\{X_1,X_2,X_3\}$ at $e$ to any frame at $\pi (u)$ with the first reference vector $u$, by the action of $nam_1\widetilde{a}m_2$. So the claim is established.   Similarly, we have a decomposition $G=HAN$ induced from the decomposition $G=NAH$ by the inverse map of $G$. This decomposition parametrizes all elements in $G-G_V^{-1}$.  \par

\section{Setup of the problem}\label{setup}
Let $\mathcal P\subset \CC$ be a bounded Apollonian gasket, and $\mathcal C=\mathcal C_{\mathcal P}$ be the collection of all centers from $\mathcal P$.  Let $\mathcal P_t$ be the set of the circles from $\mathcal P$ with curvatures $<e^{-t}$ and $\mathcal C_t$ be the set of centers of $\mathcal P_t$.  \par

Fix an open set $E\subset\CC$ with $E\cap \mathcal P\neq\emptyset$ and $\partial E$ empty or piecewise smooth, and fix a multi-set $\boldsymbol{\Omega}=\prod_{1\leq i\leq k}\Omega_i\subset \mathbb C^k $, where $\Omega_i,1\leq i\leq k$ are bounded open subsets of $\mathbb{C}$ with piecewise smooth boundaries. \par
Let $$\mathcal B_t(\Omega_i,z):=(e^{-t}\Omega_i+z)\cap\mathcal C_t,$$ and $$\mathcal N_t(\Omega_i,z):=\#\mathcal B_t(\Omega_i,z).$$ 

We want to study 
\al{\int_{\CC}\prod_{1\leq i\leq k}\bd{1}\{\mathcal N_t(\Omega_i,z)=r_i\}\chi_E(z)dz.}
and 
\al{\int_{\CC}\prod_{1\leq i\leq k}\mathcal N_t(\Omega_i,z)^{\beta_i}\chi_E(z)dz.}
as $t\rightarrow \infty$.

To proceed, first we choose a Kleinian group $\Gamma< PSL(2,\CC)$ whose limit set $\Lambda(\Gamma)=\mathcal{P}$, such that $\Gamma$ transitively on the circles from $\mathcal{P}$.  The existence of $\Gamma$ can be seen as follows: let $$\Gamma_0=\left\langle PSL(2,\mathbb{Z}), \mat{i&1\\0&-i}\right\rangle.$$  One can check that the limit set of $\Gamma_0$ is the closure of the unbounded Apollonian packing $\mathcal P_0$, determined by three mutually tangent circles $\RR,\RR+\bd{i}, C(\bd{i}/2,1/2)$, and $\Gamma_0$ acts transitively on the circles from $\mathcal P_0$.  Since any Apollonian packing $\mathcal P$ can be mapped to $\mathcal P_0$ by a M\"obius transform, the symmetry group $\Gamma$ of $\mathcal P$ can then be taken as a conjugate of $\Gamma_0$.  \par

Recall that $S$ is the geodesic plane with $\partial S=C(0,1)$, then for any isometry $g\in G$, $g(S)$ is also a geodesic plane, so in the upper half-space model, $g(S)$ is either a hemisphere or a vertical plane.\par

Recall the maps $\bd{q}$ from $G$ to $\overline{\HH^3}$, $\bd{q}_\Re$ from $G$ to $\widehat{\CC}$ defined at \eqref{1244}, \eqref{1245}.  If $\infty\not\in g(\partial S)$, there exists a unique geodesic $l_g$ which traverses $g(S)$ perpendicularly.   Then $\bd{q}(g)$ is the intersection of $l_g$ and $g(S)$, and $\bd{q}_{\Re}(g)$ is the other end point of $l(g)$ besides $\infty$, whence we can see that the definitions for $\bd{q}$ and $\bd{q}_{\Re}$ at $g$ with $\infty\in g(\partial S)$ are continuous extensions.  Therefore, both $\bd{q}$ and $\bd{q}_{\RR}$ are continuous everywhere. \par

Let $\bd{r}=\langle r_1,\dots, r_k\rangle, \boldsymbol{\beta}=\langle \beta_1,\dots,\beta_k\rangle$ be multi-indices, where $r_i\in\ZZ_{\geq 0},\beta_i\in \RR_{\geq 0},1\leq i\leq k$, and at least one component of $\bd{r},\boldsymbol{\beta}$ is nonzero.   Let $\Omega_i^*\subset \HH^3$ be the ``chimney"
$$\Omega_i^*:=\{z+r\bd{j}:z\in\Omega_i, r>1\},$$
for $1\leq i\leq k$.  \par
Let $\Gamma_H=\Gamma\cap H$.  Since $Stab(C(0,1))=H$ and $\Gamma$ acts transitively on the circles from $\mathcal P$, we have 
$$\mathcal C=\{\Re(\bd{q}(\gamma)): \gamma\in \Gamma/\Gamma_H  \},$$
and
$$\mathcal C_t=\{\Re(\bd{q}(\gamma)): \gamma\in \Gamma/\Gamma_H, \Im(\bd{q}(\gamma ))>e^{-t}   \}.$$
Therefore, we can rewrite $\mathcal N_t(\Omega_i,z)$ as 
\al{\label{0428}\nonumber
\mathcal N_t(\Omega_i,z)=&\#(e^{-t}\Omega_i+z)\cap \mathcal C_t\\
\nonumber=& \#\{\gamma\in\Gamma/\Gamma_H: \Re(\bd{q}(\gamma))\in e^{-t}\Omega_i+z, \Im(\bd{q}(\gamma))>e^{-t}\}\\
=&\#\{\gamma\in\Gamma/\Gamma_H:  \Re(a_{-t}n_{-z}\bd{q}(\gamma))\in \Omega_i, \Im(a_{-t}n_{-z}\bd{q}(\gamma))>1\}\nonumber\\
=&\#\{\gamma\in\Gamma/\Gamma_H: \bd{q}(a_{-t}n_{-z}\gamma)\in\Omega_i^*  \}.
}

Recall the definitions for the functions $F_{{\boldsymbol{\Omega}},{\bd{r}}}, F_{\boldsymbol{\Omega}}^{\boldsymbol{\beta}}$ on $G$ defined by \eqref{03581} and \eqref{03591}:
\aln{
&F_{{\boldsymbol{\Omega}},{\bd{r}}}(g):=\prod_{1\leq i\leq k }\bd{1}\left\{\#( \bd{q}(g^{-1}\Gamma/\Gamma_H)\cap \Omega_i^*)=r_i  \right\},\\
&F_{\boldsymbol{\Omega}}^{\boldsymbol{\beta}}(g):=\prod_{1\leq i\leq k }\#( \bd{q}(g^{-1}\Gamma/\Gamma_H)\cap \Omega_i^*)^{\beta_i}.
}

Collecting \eqref{0428},\eqref{03581},\eqref{03591}, we have
\al{\label{0438}
&\int_{\CC}\prod_{1\leq i\leq k}\bd{1}\{\mathcal N_t(\Omega_i,z)=r_i\}\chi_E(z)dz=\int_\CC F_{\bd{\Omega},\bd{r}}(n_{z}a_t)\chi_E(z)dz,\\
\label{0439}&\int_{\CC}\left(\prod_{1\leq i\leq k}\mathcal N_t(\Omega_i,z)^{\beta_i}\right)\chi_E(z)dz=\int_\CC F_{\boldsymbol{\Omega}}^{\boldsymbol{\beta}}(n_{z}a_t)\chi_E(z)dz.
}\\

At this point, we have rephrased our problem in the setting of Theorem \ref{03211}.  We restate it here:
\begin{thm}[Mohammadi-Oh, \cite{MO15}]\label{0321} Suppose $\Gamma<G$ is geometrically finite.  Suppose $\Gamma\backslash \Gamma N$ is closed in $\Gamma\backslash G$ and $|\mu_{N}^{\text{PS}}|<\infty$.  For any $\Psi\in C_c^{\infty}(\Gamma\backslash G)$ and any $f\in C^{\infty}(\Gamma\backslash \Gamma{N})$, we have
\al{\label{1258}\lim_{t\rightarrow\infty}e^{(2-\delta)t}\int_{\Gamma\backslash\Gamma N}\Psi(na_t)f(n)d\mu_{N}^{\normalfont \text{Leb}}(n)=\frac{m^{\rm{BR}}(\Psi)\mu_{N}^{\rm{PS}}(f)}{m^{\rm{BMS}}(\Gamma\backslash G) }.}
\end{thm}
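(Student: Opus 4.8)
\emph{Proof plan.} This is Mohammadi--Oh's theorem, so let me describe the architecture one would follow to prove it. The single substantive input is \emph{mixing of the frame flow}: because $\Gamma$ is geometrically finite and $|m^{\bms}|<\infty$, the right-translation flow $g\mapsto ga_t$ on $\Gamma\bs G$ is mixing with respect to the Bowen--Margulis--Sullivan measure, i.e.
\[
\lim_{t\to\infty}\int_{\Gamma\bs G}\phi_1(ga_t)\,\phi_2(g)\,dm^{\bms}(g)=\frac{m^{\bms}(\phi_1)\,m^{\bms}(\phi_2)}{m^{\bms}(\Gamma\bs G)}
\]
for all $\phi_1,\phi_2\in C_c(\Gamma\bs G)$ (due, for the geodesic flow, to Babillot, and for the full $\PSL(2,\CC)$-frame flow --- needed because of the extra $M$-direction separating $G$ from $\HH^3$ --- to Winter). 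Everything else is a device for transferring this to the expanding horosphere $\Gamma\bs\Gamma Na_t$.

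First I would localize: by a smooth partition of unity on $\Gamma\bs\Gamma N$ one reduces to $f$ supported on a flow box over which $N\to\Gamma\bs\Gamma N$ is injective, so that $f$ may be viewed as a compactly supported function on a bounded piece of $N$; passing from a general $f\in C^{\infty}(\Gamma\bs\Gamma N)$ with $|\mu_N^{\ps}|<\infty$ to such $f$ requires showing that the part of the horosphere wandering into the geometrically finite ends contributes negligibly after multiplication by $e^{(2-\delta)t}$. Next comes the thickening step. Near the identity $G$ is a local product $N\cdot(AMN^{-})$, with $A$ centralizing $AM$ and conjugation by $a_{-t}$ contracting $N^{-}$; hence for $v$ in a small box $B_\eta\subset (AM)_\eta N^{-}_\eta$ one has $\Psi(nva_t)=\Psi\big(na_t(a_{-t}va_t)\big)=\Psi(na_t)+o(1)$ as $t\to\infty$, uniformly, up to the $\eta$-modulus of continuity of $\Psi$ in the $AM$-direction. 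Averaging $v$ over $B_\eta$ against a probability measure chosen so that $d\mu_N^{\text{Leb}}(n)\,d\lambda(v)$ is the local form of a Patterson--Sullivan-type conformal measure on $\Gamma\bs G$, and sandwiching $\Psi$ between its sup/inf $\Psi^\eta_\pm$ over an $\eta$-neighborhood, one rewrites
\[
\int_{\Gamma\bs\Gamma N}\Psi(na_t)\,f(n)\,d\mu_N^{\text{Leb}}(n)=\big(1+O(\eta)\big)\,\frac{1}{\lambda(B_\eta)}\int_{\Gamma\bs G}\Psi^\eta_{\pm}(ga_t)\,\tilde f_\eta(g)\,dm(g)+o(1),
\]
where $\tilde f_\eta$ is a thickening of $f$ and $m$ the appropriate conformal measure; the factor $e^{(2-\delta)t}$ is exactly the discrepancy, as $t\to\infty$, between the $a_t$-dilated Lebesgue measure of the relevant $N$-pieces (of ``dimension'' $2$) and the $\delta$-dimensional Patterson--Sullivan density near $\Lambda(\Gamma)$ against which $\Psi$ is effectively integrated.

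Having reduced to a $G$-integral, I would invoke the explicit relations among the conformal measures built from the Patterson--Sullivan density --- between the Lebesgue and Patterson--Sullivan weightings along $N$, and between the Burger--Roblin and Bowen--Margulis--Sullivan measures --- to put the integral in the shape to which the mixing estimate applies, let $t\to\infty$, then let $\eta\to0$, using $m^{\br}(\Psi^\eta_\pm)\to m^{\br}(\Psi)$ and recognizing the surviving transverse normalization as $\mu_N^{\ps}(f)/m^{\bms}(\Gamma\bs G)$. This produces the claimed constant $m^{\br}(\Psi)\,\mu_N^{\ps}(f)/m^{\bms}(\Gamma\bs G)$.

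The main obstacle is not the mixing-to-equidistribution mechanism, which is classical in the finite-volume case (Sarnak, Eskin--McMullen, Kleinbock--Margulis), but making every step uniform in the infinite-volume setting: one must control escape of mass in the thickening so that the $O(\eta)$ and $o(1)$ terms do not blow up as the injectivity radius degenerates in the cusps, and one must justify the reduction to compactly supported $f$ via the decay of $\mu_N^{\ps}$ toward the geometrically finite ends. Both rely on the geometrically finite hypothesis together with $|\mu_N^{\ps}|<\infty$ and $|m^{\bms}|<\infty$, and this is the part of the argument that genuinely goes beyond the classical finite-volume proofs.
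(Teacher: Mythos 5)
This is a theorem the paper imports as a black box from Mohammadi--Oh \cite{MO15}: the paper states it (twice, as Theorem \ref{03211} and again as Theorem \ref{0321}) and never proves it, so there is no in-paper proof to compare your proposal against. That said, your sketch is an accurate high-level account of how the result is actually established in the literature: mixing of the frame flow for the BMS measure (Rudolph/Babillot for the geodesic flow, Winter for the extension to $\PSL(2,\CC)$ needed here), reduction to flow boxes, transversal thickening in the $AMN^{-}$ directions with the $e^{(2-\delta)t}$ scaling arising from the Jacobian mismatch between Lebesgue measure on $N$ and the $\delta$-conformal Patterson--Sullivan density, passage through the BR/BMS measure relations, and --- the genuinely infinite-volume difficulty --- uniform control of mass escape into the cusps and funnels, for which geometric finiteness together with $|\mu_N^{\ps}|<\infty$ and $|m^{\bms}|<\infty$ is used. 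You correctly identify that last point as the substantive new content relative to the classical finite-volume equidistribution arguments of Sarnak, Eskin--McMullen, and Kleinbock--Margulis. No gap, but be aware it is a plan rather than a proof; for the purposes of this paper one would simply cite \cite{MO15} as the author does, since the paper's own contribution (Proposition \ref{1426} and Theorem \ref{07141}) is precisely to relax the $C_c^\infty$ hypotheses in this theorem, not to reprove it.
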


Here $m^{\br},\mu_N^{\ps},m^{\bms}$ are certain conformal measures for which we are going into detail in the next section.  In our situation, $\Gamma$ is the symmetry group of the Apollonian gasket $\mathcal P$, $f$ is the characteristic funtion $\chi_E$, and $\Psi$ is $F_{\boldsymbol{\Omega}, \bd{r}}$ or $F_{\boldsymbol{\Omega}}^{\boldsymbol{\beta}}$.  We have $\Gamma\bs \Gamma N=N$ as $\Gamma\cap N=\{e\}$.  Since $\Gamma$ is geometrically finite,  we have $0<m^{\bms}(\Gamma\bs G)<\infty$ (Corollary 1.3, \cite{BJ97}).  We will also see that $\mu_{N}^{\rm{PS}}(\chi_E)<\infty$.  The issue for us to apply Theorem \ref{03211} is, none of the functions  $f,F_{\boldsymbol{\Omega}, \bd{r}}$ or $F_{\boldsymbol{\Omega}}^{\boldsymbol{\beta}}$ is continuous.  Moreover, $F_{\boldsymbol{\Omega}, \bd{r}}$, $F_{\boldsymbol{\Omega}}^{\boldsymbol{\beta}}$ are not compactly supported, so apriori $m^{\br}(F_{\boldsymbol{\Omega}, \bd{r}})$, $m^{\br}(F_{\boldsymbol{\Omega}}^{\boldsymbol{\beta}})$ can be $\infty$. The purpose of the next two sections is to prove Theorem \ref{07141}, which is an extended version of Theorem \ref{03211}. Along the way we will show that  $m^{\br}(F_{\boldsymbol{\Omega},\bd{r}}), m^{\br}(F_{\boldsymbol{\Omega}}^{\boldsymbol{\beta}})<\infty$.\par

\section{Conformal Measures}\label{0129}
We keep all notation from previous sections.  Let $\Gamma< G$ be a discrete group with the limit set $\Lambda(\Gamma)=\overline{\mathcal{P}}$ and acting transitively on the circles from $\mathcal{P}$.  A family of finite measures $\{\mu_x:x\in\mathbb{H}^3\}$ on $\partial \mathbb H^3$ is called a \emph{$\Gamma$-invariant conformal density} of dimension $\delta_\mu>0$ if for any $x,y\in\mathbb{H}^3, u\in\partial\mathbb{H}^3$, 
\aln{
\gamma^*\mu_x=\mu_{\gamma x},\hspace{0.2in}\text{and} \hspace{0.2in}\frac{d\mu_x(u)}{d\mu_y(u)}=e^{-\beta_u(x,y)\delta_{\mu}},
}
where for any Borel set $F\subset\partial \mathbb{H}^n$, $\gamma^{*}\mu_x(F)=\mu_x(\gamma^{-1}F)$.  The function $\beta_u$ is the Busemann function defined as: $$\beta_u(x,y)=\lim_{t\rightarrow\infty}d(u_t, x)-d(u_t,y),$$ where $u_t$ is any geodesic ray tending to $u$ as $t\rightarrow \infty$.\par

Two particularly important densities are the Lebesgue density $\{m_x:x\in\HH^3\}$ and the Patterson-Sullivan density $\{\nu_x:x\in\HH^3\}$.  The Lebesgue density is a $G$-invariant density of dimension 2, and for each $x$, $m_{x}$ is $Stab(x)$-invariant.  The Patterson-Sullivan density $\{\nu_x\}$ is supported on the limit set $\overline{\mathcal P}$, and of dimension $\delta$ \cite{Su79}.  Both densities are unique up to scaling. We normalize these densities so that $|\nu_{\bd{j}}|=1$ and $|m_{\bd{j}}|=\pi$.\par

Write $z=x+y\bd{i}$.  We have an explicit formula for $m_{\bd{j}}$ in the $\CC$ coordinate:  
\al{\label{0713}dm_{\bd{j}}(z)=\frac{dxdy}{(1+x^2+y^2)^2}.}
Therefore, $dm_{\jj}(z)\approx dxdy$ near 0. 

The formula for $\nu_{\bd{j}}$ is explicitly given as the weak limit as $s\rightarrow \delta^{+}$ of the family of measures 
$$\nu_{\jj,s}:=\frac{1}{\sum_{\gamma\in\Gamma}e^{-sd(\jj,\gamma\jj)}}\sum_{\gamma\in\Gamma}e^{-sd(\jj,\gamma\jj)}\delta_{\gamma\jj},$$
where $\delta_{\gamma\jj}$ is the Dirac delta measure supported at the point $\gamma\jj$.\par 

We have the following estimate for $\nu_{\bd{j}}(B(z,r))$, where $B(z,r)\subset\CC$ is the Euclidean ball centered at $z$ with radius $r$ (see Sec. 7 of \cite{Su84}): 
\al{\label{2232}\nu_{\bd{j}}(B(z,r))\ll \min\{r^{\delta},1\}.} 

By a simple packing argument, \eqref{2232} implies $\nu_{\bd{j}}(l)=0$ for any differentiable curve $l\subset\CC$. So by our assumption for $E$, we have $\nu_{\bd{j}}(\partial E)=0$.

We also need to work with certain measures related to the conformal densities $\{m_x: x\in\mathbb H^3\}$ and $\{\nu_x: x\in\mathbb H^3\}$.  For any $u\in \text{T}^1(\mathbb H^3)$, let $u^{-},u^+\in \widehat{\mathbb C}$ be the starting and ending points of $u$. We can identify $N$ with $\partial \HH^3-\{0\}$ via the map $g\rightarrow g(X_0)^+$.  Let $H_1=H/M$, then $H_1$ can be identified with $\partial \HH^3-\partial S$ via the map $g\rightarrow g(X_0)^-$.  We define measures $\mu_N^{\ps},\mu_{H_1}^{\ps}$ as: 
\al{
\label{0811}
&d\mu_{N}^{\text{Leb}}(n):=e^{2\beta_{n(X_1)^+}(\bd{j}, n(\bd{j}))}dm_{\bd{j}}(n(X_1)^+),\\
\label{0812}
&d\mu_{N}^{\text{PS}}(n):=e^{\delta\beta_{n(X_1)^+}(\bd{j}, n(\bd{j}))}d\nu_{\bd{j}}(n(X_1)^+),\\
\label{0813}
&d\mu_{H_1}^{\ps}(h_1):=e^{\delta\beta_{h_1(X_1)^-}(\bd{j}, h_1(\bd{j}))}d\nu_{\bd{j}}(h_1(X_1)^-).
}
Later on it will follow from Lemma \ref{0741} that $\mu_N^{\text{Leb}}(n_z)=dz$, so $\mu_N^{\text{Leb}}$ is in fact a Haar measure on $N$. \par

We can lift the measure $\mu_{H_1}^{\ps}$ to a unique right $M$-invariant measure $\mu_H^{\ps}$ on $H$ satisfying:
for any $f\in C_c(H_1)$, define $\hat{f}\in C_c(H)$ as $$\hat{f}(h)=f(hM).$$ Then 
$$\int_{H}\hat{f}(h)d\mu_{H}^{\ps}(h)=\int_{H_1}f(h_1)d\mu_{H_1}^{\ps}(h_1).$$ 
We can view $H$ as a circle bundle over $H_1$. Under this viewpoint, from the definition of $\mu_{H}^{\ps}$ we have 
$$\mu_{H}^{\ps}=d\mu_{H_1}^{\ps}\cdot dm_M^{\text{Haar}},$$
where $m_M^{\text{Haar}}$ is the Haar measure of $M$ with $|m_M^{\text{Haar}}|=1$.

\subsection{Finiteness of $\mu_N^{\normalfont\ps}(N), \mu_{H_1}^{\normalfont\ps}(\Gamma_H\bs H_1)$ and $ \mu_{H}^{\normalfont\ps}(\Gamma_H\bs H)$}  In this section we are going to show $0<\mu_N^{\normalfont\ps}(N), \mu_{H_1}^{\normalfont\ps}(\Gamma_H\bs H_1), \mu_{H}^{\normalfont\ps}(\Gamma_H\bs H)<\infty$.  The $>0$ part is trivial and we focus on the $<\infty$ part.  We begin with a calculation:

\begin{lem}\label{0227}For any $q\in\HH^3$, we have $\beta_{\infty}(\jj,q)=\log \Im(q)$.
\end{lem}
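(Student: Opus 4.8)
\textbf{Proof proposal for Lemma \ref{0227}.}

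The plan is to evaluate the defining limit for $\beta_\infty$ along a convenient geodesic ray. Since the limit defining the Busemann function is independent of the choice of geodesic ray tending to a given boundary point, I would take the vertical ray $u_r = r\jj$ with $r\to\infty$, which indeed converges to $\infty\in\partial\HH^3$. Thus it suffices to show that for $q=z+s\jj$ with $z\in\CC$, $s>0$, we have
\[
\lim_{r\to\infty}\bigl(d(r\jj,\jj)-d(r\jj,q)\bigr)=\log s.
\]

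First I would compute $d(r\jj,\jj)$ exactly. Plugging $q_1=r\jj$, $q_2=\jj$ into the distance formula \eqref{0518} and using $\operatorname{Arccosh}(x)=\log\bigl(x+\sqrt{x^2-1}\bigr)$, the argument becomes $\tfrac{r^2+1}{2r}$, whose "discriminant" $x^2-1$ factors as $\bigl(\tfrac{r^2-1}{2r}\bigr)^2$; hence $d(r\jj,\jj)=\log r$ on the nose (for $r>1$). Next I would compute $d(r\jj,q)$ the same way: with $a=|z|^2$ the Arccosh argument is $x=\tfrac{a+r^2+s^2}{2rs}$, and the key algebraic step is the factorization
\[
(a+r^2+s^2)^2-4r^2s^2=\bigl(a+(r-s)^2\bigr)\bigl(a+(r+s)^2\bigr),
\]
which gives $d(r\jj,q)=\log\dfrac{a+r^2+s^2+\sqrt{(a+(r-s)^2)(a+(r+s)^2)}}{2rs}$.

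Finally I would form the difference and take $r\to\infty$:
\[
d(r\jj,\jj)-d(r\jj,q)=\log\frac{2r^2 s}{\,a+r^2+s^2+\sqrt{(a+(r-s)^2)(a+(r+s)^2)}\,},
\]
and since the denominator is asymptotic to $2r^2$ as $r\to\infty$, the quantity inside the logarithm tends to $s=\Im(q)$, yielding $\beta_\infty(\jj,q)=\log\Im(q)$. There is no real obstacle here; the statement is a direct computation. The only points requiring a little care are invoking the ray-independence of the Busemann limit (so that the vertical ray may be used) and carrying out the Arccosh simplification and the discriminant factorization cleanly; everything else is elementary asymptotics.
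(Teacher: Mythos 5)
Your proof is correct and takes essentially the same approach as the paper: both choose the vertical geodesic ray to $\infty$ and compute the limit defining $\beta_\infty$ via the hyperbolic distance formula \eqref{0518}. The only cosmetic difference is that the paper first uses the isometry $q\mapsto e^{-t}q$ to rewrite $d(e^t\jj,q)=d(\jj,e^{-t}q)$ and then expands $\operatorname{Arccosh}$ asymptotically, whereas you compute $d(r\jj,q)$ exactly via $\operatorname{Arccosh}(x)=\log\bigl(x+\sqrt{x^2-1}\bigr)$ and the discriminant factorization before passing to the limit; both routes are correct and of comparable length.
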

\begin{proof}
By the definition of the Buseman function,  
\al{\label{0217}
\beta_{\infty}(\jj, q)=\lim_{t\rightarrow\infty}d(e^{t}\jj,\jj)-d(e^{t}\jj,q)=t-\lim_{t\rightarrow\infty}d(\jj,e^{-t}q).
}

From the hyperbolic Disance formula \eqref{0518},
\al{\nonumber d(\jj,e^{-t}q)=&\text{Arccosh}\left(1+\frac{e^t|\jj-e^{-t}q|^2}{2\Im(q)}\right)\\\nonumber=&\text{Arccosh}\left(1+\frac{e^{t-\log \Im(q)}}{2}(1+O_q(e^{-t}))\right)\\=&t-\log\Im(q)+O_q(e^{-t}).\label{0216}}
Applying \eqref{0216} to \eqref{0217}, we obtain 
$$\beta_{\infty}(\jj, q)=\lim_{t\rightarrow\infty} t- (t-\log\Im(q)+O_q(e^{-t}))=\log \Im(q).$$

\end{proof}

Returning to \eqref{0812}, we have 
\al{
e^{\delta\beta_{n(X_1)^+}(\bd{j}, n(\bd{j}))}=e^{\delta\beta_{n(0)}(\bd{j}, n(\bd{j}))}=e^{-\delta\beta_{0}( \bd{j},n^{-1}(\bd{j}))}=  e^{-\delta\beta_{\infty}\left( \bd{j},\mat{0&-1\\1&0}n^{-1}\left(\bd{j}\right)\right)}=(|n^{-1}(0)|^2+1)^{\delta}
}
Since $\Lambda(\Gamma)=\overline{\mathcal P}$ is compact, the term $(|n^{-1}(0)|^2+1)^{\delta}$ is bounded on the support of $\Lambda(\Gamma)=\overline{\mathcal P}$ of $\mu_N^{\ps}$. As $|v_{\jj}|$ is finite, we have $\mu_N^{\ps}(N)<\infty$.\par

Now we consider $\mu_H^{\ps}(\Gamma_H\bs H_1)$ and $\mu_H^{\ps}(\Gamma_H\bs H)$.  Both $\Gamma_H\bs H$ and $\Gamma\bs G$ have one cusp, whose ranks in both $\Gamma_H$ and $\Gamma$ are equal to 1.   Therefore $\mu_{H_1}^{\ps}$ is compactly supported in $\Gamma_{H} \bs H_1$ from Theorem 6.3 \cite{OS13}.  Thus the term $e^{\delta\beta_{h_1(X_1)^-}(\bd{j}, h_1(\bd{j}))}$ from \eqref{0813} is bounded on the support of $\mu_{H_1}^{\ps}$, so that $\mu_{H}^{\ps}(\Gamma_{H}\bs H)={\mu_{H_1}^{\ps}(\Gamma_{H}\bs H_1)}<\infty$.

\subsection{Quasi-product Conformal Measures on $\text{T}^1(\mathbb H^3)$ and $G$}
Following Roblin \cite{Ro00}, given two conformal measures $\{\mu_x\},\{\mu_x'\}$, we can define a quasi-product measure $\widetilde{m}^{\mu,\mu'}$ on $\text{T}^1(\mathbb H^3)$  by 
\aln{d\widetilde{m}^{\mu,\mu'}(u)=e^{\delta_\mu\beta_{u^+}(o,\pi(u))}e^{\delta_{\mu'}\beta_{\mu^-}(o,\pi(u))}d\mu_o(u^{+})d\mu_o'(u^-)ds,
} 
where $o$ is any point in $\mathbb H^3$, $u\in \text{T}^1(\mathbb H^3)$, $u^-,u^+\in\partial \mathbb H^3$ are the starting and ending points of the geodesic ray containing $u$, and $s=\beta_{u^-}(o,\pi(u))$.  It is an exercise to check that
\begin{enumerate}[i)]
 \item The definition of $\widetilde{m}^{\mu,\mu'}$ is independent of the chosen base point $o$.
 \item The measure $\widetilde{m}^{\mu,\mu'}$ is left $\Gamma$-invariant.
\end{enumerate}

We can lift the measure $m^{\mu,\mu'}$ to a unique right $M$-invariant measure on $G$ satisfying:
for any $f\in C_c(\text{T}^1(\HH^3))$, define $\hat{f}\in C_c(G)$ as $$\hat{f}(g)=f(g(X_1)).$$ Then 
$$\int_{G}\hat{f}(g)dm^{\mu,\mu'}(g)=\int_{T^1(\HH^3)}{f}(u)d\widetilde{m}^{\mu,\mu'}(u).$$ 
We can view $G$ as a circle bundle over $\text{T}^1(\HH^3)$, and the right action of $M$ on $G$ preserves fibers.  From the right $M$-invariance of $m^{\mu,\mu'}$, we have 
$$dm^{\mu,\mu'}=d\widetilde{m}^{\mu,\mu'}\cdot dm_{M}^{\text{Haar}}.$$
 
By the $\Gamma$-invariance, the measures $\widetilde{m}^{\mu,\mu'},m^{\mu,\mu'}$ naturally descend to measures on $\Gamma\backslash\text{T}^1(\HH^3), \Gamma\backslash G$, for which we keep the same notation.  For a left $\Gamma$-invariant function $F$ on $\Gamma\bs G$, we denote the integral $\int_{\Gamma\bs G}F(g)dm^{\mu,\mu'}(g)$ by $m^{\mu,\mu'}(F)$.  \\

We choose the base point $o=\jj$.  The following two quasi-product measures will appear in our analysis:
\begin{enumerate}
\item $\mu=m_{\bd{j}}, \mu'=\nu_{\bd{j}}$; we denote the measure on $\text{T}^1(\HH^3)$ by $\widetilde{m}^{\text{BR}}$, and the measure on $G$ by $m^{\text{BR}}$. These measures are called the Burger-Roblin measures.
\item $\mu=\nu_{\bd{j}}, \mu'=\nu_{\bd{j}}$; we denote the measure on $\text{T}^1(\HH^3)$ by $\widetilde{m}^{\text{BMS}}$, and the measure on $\Gamma\backslash G$ by $m^{\text{BMS}}$. These measures are called the Bowen-Margulis-Sullivan measures.
\end{enumerate}

We point out a few useful properties of these quasi-product measures.  \par
The Burger-Roblin measures and the Bowen-Margulis-Sullivan measures are locally finite and regular Borel measures, which vanish on a countable union of submanifolds of $\text{T}^{1}(\HH^3)$ or $G$ of codimension $\geq 1$ (for instance, algebraic subvarieties of $G$ of codimension $\geq 1$).  This is because locally, the Burger-Roblin measures and the Bowen-Margulis-Sullivan measures are products of measures ($m_{\bd{j}},\nu_{\bd{j}},m_{\RR}^{\text{Harr}},m_{M}^{\text{Haar}}$), each of which is locally finite, regular and vanishes on submanifolds of codimension $\geq1$ of its corresponding measure space. \par

Finally, we have $0<m^{\bms}(\Gamma\bs G)<\infty$, which follows from the geometrically finiteness of $\Gamma$ (see Page 270 of \cite{Su84}).

\subsection{Computation of ${m}^{\normalfont\text{BR}}$ in the generalized Iwasawa Coordinates} 
The purpose of this section is to compute $m^{\text{BR}}$ in the $HAN$ coordinates (Proposition \ref{0130}).  We further write $H$ into its Cartan decomposition \eqref{0532}.  This decomposition provides an explicit fibration of $H$ over $H_1$, with the first two factors $M\times\left(\widetilde A^+\cup\widetilde A^+\mat{0&-1\\1&0}\right)$ of  \eqref{0532} parametrize $H_1=H/M$ except for two points ${M},{\mat{0&-1\\1&0}}M$.  For this reason and for simplicity we abuse notation, writing $$H_1=M\times\left(\widetilde A^+\cup\widetilde A^+\mat{0&-1\\1&0}\right),$$
ignoring the two points $M$ and $\mat{0&-1\\1&0}M$.\par
We first observe that the product map $\rho_2$:
\aln{& L_2:=M\times\left( \widetilde{A}^{+}\cup \widetilde{A}^{+}\mat{0&-1\\1&0}\right) \times A\times N\rightarrow \text{T}^1(\HH^3),\\
&\rho_2(m,\tilde a,a, n):=m\tilde a an(X_1)
}
embeds $L_2$ into an Zariski-open subset of $\text{T}_1(\HH^3)$, by a consideration similar to an earlier one for the $NAH$ decomposition below \eqref{1220}.  Under the $H_1 AN$ coordinates of $\text{T}^1(\HH^3)$, we can compute \par

\al{\nonumber
d\widetilde{m}^{BR}(h_1a_{t_1}n_z(X_1))=&e^{2\beta_{h_1a_{t_1}n_zX_1^+}(\jj,h_1a_{t_1}n_z\jj)}\cdot e^{\delta\beta_{h_1a_{t_1}n_zX_1^-}(\jj, h_1a_{t_1}n_z\jj)}\\
\nonumber &dm_{\jj}(h_1a_{t_1}n_zX_1^+)d\nu_{\jj}(h_1a_{t_1}n_zX_1^-)dt\\
\nonumber =& e^{2\beta_{h_1a_{t_1}n_z0}(\jj,h_1a_{t_1}n_z\jj)}\cdot e^{\delta\beta_{h_1\infty}(\jj,h_1a_{t_1}n_z\jj)}\\
&dm_{\jj}(h_1a_{t_1}n_z0)d\nu_{\jj}(h_1\infty)dt, \label{0451}
}
where $t=\beta_{h_1\infty}(\jj,h_1a_{t_1}n_z\jj)$.   \par

Applying Lemma \ref{0227} to $t=\beta_{h_1\infty}(\jj,h_1a_{t_1}n_z\jj)$, we obtain
\al{\label{0452}
t=\beta_{h_1\infty}(\jj,h_1a_{t_1}n_z\jj)=&\beta_{\infty}(h_1^{-1}\jj,a_{t_1}n_z\jj)=\beta_{\infty}(h_1^{-1}\jj,\jj)+\beta_{\infty}(\jj, a_{t_1}n_z\jj)\nonumber\\
=&-\log(h_1^{-1}\jj)-t_1.
}
Combining \eqref{0451} and \eqref{0452}, we obtain 
\al{\label{1649}
d\widetilde{m}^{BR}(h_1a_{t_1}n_z(X_1))=\frac{1}{\Im (h_1^{-t}\bd{j})^{\delta}}e^{2\beta_{h_1a_{t_1}n_z0}(\jj,h_1a_{t_1}n_z\jj)}e^{-\delta t_1}dm_{\jj}(h_1a_{t_1}n_z0)d\nu_{\jj}(h_1\infty)dt_1.
}

\begin{lem} \label{0450}For any $g\in G$, consider the measure $\lambda_g$ on $N$ given by 
\al{\label{0741}d\lambda_g(n_z)=e^{2\beta_{gn_z0}(\jj,gn_z\jj)}dm_{\jj}(gn_z0).}
Then $\lambda_g=\lambda_e$ and $\lambda_e$ is a Haar measure on $N$.
\end{lem}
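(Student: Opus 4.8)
The plan is to reduce the statement to the two defining axioms of the Lebesgue conformal density $\{m_x\}$ — its $2$-conformality $dm_x(u)/dm_y(u)=e^{-2\beta_u(x,y)}$ and its $G$-invariance $m_{g\jj}(F)=m_{\jj}(g^{-1}F)$ — together with the cocycle identity $\beta_u(x,y)=\beta_u(x,z)+\beta_u(z,y)$ and the isometry invariance $\beta_{gu}(gx,gy)=\beta_u(x,y)$ of the Busemann function.

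First I would split the exponent using the cocycle. Since $n_z(0)=z$, we have $gn_z0=g(z)$, so, writing $u=z\in\partial\HH^3$,
\[
\beta_{gn_z0}(\jj,gn_z\jj)=\beta_{g(u)}(\jj,g\jj)+\beta_{g(u)}(g\jj,gn_z\jj)=\beta_{g(z)}(\jj,g\jj)+\beta_z(\jj,n_z\jj),
\]
the first equality by the cocycle identity, the second by isometry invariance. Plugging this into the definition factors $\lambda_g$ as
\[
d\lambda_g(n_z)=e^{2\beta_z(\jj,n_z\jj)}\Big(e^{2\beta_{g(z)}(\jj,g\jj)}\,dm_{\jj}(g(z))\Big).
\]
The leading factor $e^{2\beta_z(\jj,n_z\jj)}\,dm_{\jj}(z)$ is exactly $d\lambda_e(n_z)$ (again because $n_z0=z$). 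For the bracket, $2$-conformality gives the pointwise identity $e^{2\beta_w(\jj,g\jj)}\,dm_{\jj}(w)=dm_{g\jj}(w)$, and then the $G$-invariance of $\{m_x\}$ is precisely the change of variables $w=g(z)$ that rewrites $dm_{g\jj}(g(z))$ as $dm_{\jj}(z)$. Hence $d\lambda_g(n_z)=e^{2\beta_z(\jj,n_z\jj)}\,dm_{\jj}(z)=d\lambda_e(n_z)$, which proves $\lambda_g=\lambda_e$.

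It remains to identify $\lambda_e$. Rerunning the computation from earlier in this section that produced $e^{\delta\beta_{n(X_1)^+}(\jj,n\jj)}=(|n^{-1}(0)|^2+1)^\delta$, but with the exponent $2$ in place of $\delta$ (so that the only external ingredient is Lemma \ref{0227}), gives $e^{2\beta_{n_z0}(\jj,n_z\jj)}=(1+|z|^2)^2$. Combined with $dm_{\jj}(z)=dx\,dy/(1+x^2+y^2)^2$ from \eqref{0713} and $z=x+y\bd{i}$, this yields $d\lambda_e(n_z)=dx\,dy$, Lebesgue measure on $\CC\cong N$, which is a Haar measure; in particular this is the normalization $\mu_N^{\text{Leb}}(n_z)=dz$ used later. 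If one wishes to avoid the explicit computation, taking $g=n_w$ in the first step gives $\lambda_{n_w}=\lambda_e$, while expanding $n_wn_z=n_{w+z}$ in the definition shows $\lambda_{n_w}$ is the $n_w$-translate of $\lambda_e$; comparing the two yields left $N$-invariance of $\lambda_e$ directly.

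I expect the only delicate points to be the bookkeeping of push-forwards versus pull-backs in the step $dm_{g\jj}(g(z))=dm_{\jj}(z)$, and checking that when the M\"obius map $z\mapsto g(z)$ has a pole in $\CC$ this affects only a single point and hence nothing, since $m_{\jj}$ is absolutely continuous with respect to planar Lebesgue measure and so has no atoms. Neither is a genuine obstacle: the lemma is a formal consequence of the conformal-density axioms and Lemma \ref{0227}.
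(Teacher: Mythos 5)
Your proof is correct and uses precisely the same four ingredients as the paper's (the cocycle identity, isometry-invariance of the Busemann function, $2$-conformality, and $G$-invariance of the Lebesgue density), just applied in a slightly different order: you split the Busemann exponent first and then collapse $e^{2\beta_{g(z)}(\jj,g\jj)}dm_{\jj}(g(z))$ to $dm_{\jj}(z)$, whereas the paper first rewrites $dm_{\jj}(gn_z0)$ via $G$-invariance and conformality and then merges the two exponents by the cocycle. For the Haar identification, both your explicit evaluation $(1+|z|^2)^2\cdot dm_{\jj}(z)=dx\,dy$ and your $n_w$-translation argument are equivalent to the paper's terse chain $d\lambda_e(n_z)=d\lambda_{n_z}(e)=d\lambda_e(e)=dz$.
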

\begin{proof}
By the $G$-invariance of $\{m_x\}$,
\al{\nonumber
dm_{\jj}(gn_z0)&=dm_{g^{-1}\jj}(n_z0)=e^{2\beta_{n_z0}(\jj,g^{-1}\jj)} dm_{\jj}(n_z0)\\
&=e^{2\beta_{n_z0}(\jj,g^{-1}\jj)}dm_{\jj}(n_z0).
}
Therefore, 
\al{
 d\lambda_g(n_z)= e^{2\beta_{n_z0}(g^{-1}\jj,n_z\jj)}\cdot  e^{2\beta_{n_z0}(\jj,g^{-1}\jj)}dm_{\jj}(n_z0)=d\lambda_e(n_z).\label{0745}
}
Combining \eqref{0741},\eqref{0745} and \eqref{0713}, we have $d\lambda_e(n_z)=d\lambda_{n_z}(e)=d\lambda_e(e)=dz$, so $\lambda_e$ is a Haar measure on $N$.
\end{proof}

Recall the definition \eqref{0813} for $\mu_{H_1}^{\text{PS}}$.  We can use Lemma \ref{0227} to compute
\al{\label{0506}d\mu_{H_1}^{\text{PS}}(h_1)=\frac{1}{\Im(h_1^{-1}\jj)^{\delta}}d\nu_{\jj}(h_1\infty)}

Collecting \eqref{1649}, Lemma \ref{0450} and \eqref{0506}, we obtain
$$d\widetilde{m}^{\text{BR}}(h_1a_{t_1}n_{z}X_{1})=e^{-\delta t_1}d\mu_{H_1}^{\text{PS}}(h_1)dzdt_1.$$
Therefore, in the $H_1ANM$ decomposition for $G$, for any $h_1\in  H_1,a_{t_1}\in A, n_z\in N, m\in M$, we have 
$$d{m}^{\text{BR}}(h_1a_{t_1}n_{z}m)=e^{-\delta t_1}d\mu_{H_1}^{\text{PS}}(h_1)dzdt_1dm_M^{\text{Haar}}(m),$$
by the right $M$-invariance of $m^{\br}$.\par

The decompositions $H_1ANM$ and $H_1MAN$ are related as follows: 
If $h_1a_{t_1}n_zm=h_1'm'a_{t_1'}n_{z'}$, then $h_1'=h_1, t_1'=t_1, m'=m, z'=m^{-1}z$. Therefore, in the $H_1MAN$ decomposition, let $h_1\in H_1, m\in M, a_{t}\in A, n_{z}\in N$, then the Burger-Roblin measure $m^{\text{BR}}$ is given by 
$$d{m}^{\text{BR}}(h_1ma_{t}n_{z})=e^{-\delta t}d\mu_{H_1}^{\rm {PS}}(h_1)dzdt_1dm.$$

Write $h=h_1m$. Since $$d\mu_H^{\rm{PS}}(h_1m)=d\mu_{H_1}^{\rm{PS}}(h_1)\cdot dm,$$ 
we obtain
\begin{prop}\label{0130} In the $HAN$ decomposition, let $h\in H, a_{t}\in A, n_{z}\in N$.  Then the Burger-Roblin measure $m^{\text{BR}}$ is given by 
$$d{m}^{\text{BR}}(ha_{t}n_{z})=e^{-\delta t}d\mu_{H}^{\rm {PS}}(h)dzdt.$$
\end{prop}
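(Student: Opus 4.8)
The plan is to obtain Proposition~\ref{0130} by assembling the computations already carried out in this section and then transporting the resulting quasi-product formula from $\mathrm{T}^1(\HH^3)$ up to $G$ and across the two orderings of the factors $H_1ANM$ and $H_1MAN$.

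First I would start from the expression \eqref{1649} for $d\widetilde m^{\br}$ in the $H_1AN$ coordinates on $\mathrm{T}^1(\HH^3)$ and simplify its three factors using what has already been proved. Lemma~\ref{0450} (the identity $\lambda_g=\lambda_e$, a Haar measure on $N$) shows that the $N$-factor $e^{2\beta_{h_1a_{t_1}n_z0}(\jj,h_1a_{t_1}n_z\jj)}\,dm_{\jj}(h_1a_{t_1}n_z0)$ equals $dz$; and \eqref{0506} identifies $\Im(h_1^{-1}\jj)^{-\delta}\,d\nu_{\jj}(h_1\infty)$ with $d\mu_{H_1}^{\ps}(h_1)$. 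This collapses \eqref{1649} to $d\widetilde m^{\br}(h_1a_{t_1}n_z(X_1))=e^{-\delta t_1}\,d\mu_{H_1}^{\ps}(h_1)\,dz\,dt_1$ on the Zariski-open image of $\rho_2$; since $\widetilde m^{\br}$ vanishes on every positive-codimension subvariety, this pointwise density formula determines $\widetilde m^{\br}$ as a measure.

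Next I would push this identity up to $G$. Since $G\to \mathrm{T}^1(\HH^3)$ is a circle bundle with fiber $M$ and $m^{\br}$ is by construction the right $M$-invariant lift of $\widetilde m^{\br}$, one gets in the $H_1ANM$ decomposition $dm^{\br}(h_1a_{t_1}n_zm)=e^{-\delta t_1}\,d\mu_{H_1}^{\ps}(h_1)\,dz\,dt_1\,dm_M^{\mathrm{Haar}}(m)$. To switch to $HAN=H_1MAN$ I would use the commutation relations in $G$: $M$ commutes with $A$ and conjugates $N$ by a rotation, so $a_{t_1}n_zm=m\,a_{t_1}\,n_{m^{-1}z}$; equivalently, the change of coordinates $h_1a_{t_1}n_zm=h_1'm'a_{t_1'}n_{z'}$ has $h_1'=h_1$, $m'=m$, $t_1'=t_1$, $z'=m^{-1}z$. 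As $z\mapsto m^{-1}z$ is a rotation of $\CC$ it preserves $dz$, so the Jacobian is $1$ and $dm^{\br}(h_1ma_tn_z)=e^{-\delta t}\,d\mu_{H_1}^{\ps}(h_1)\,dz\,dt\,dm$. Writing $h=h_1m$ and using the fibered structure $d\mu_H^{\ps}(h_1m)=d\mu_{H_1}^{\ps}(h_1)\,dm$ established earlier then yields $dm^{\br}(ha_tn_z)=e^{-\delta t}\,d\mu_H^{\ps}(h)\,dz\,dt$, which is the assertion.

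I do not anticipate a real obstacle here, since every analytic ingredient is already in place; the proof is a reorganization of the preceding displays. The only points demanding care are the two pieces of bookkeeping: confirming that $\rho_2$ parametrizes a subset of $\mathrm{T}^1(\HH^3)$ whose complement is $\widetilde m^{\br}$-null, so that the local density formula legitimately pins down the global measure, and getting the conjugation relation $a_tn_zm=ma_tn_{m^{-1}z}$ together with its (trivial) Jacobian exactly right when passing between the $H_1ANM$ and $H_1MAN$ orderings.
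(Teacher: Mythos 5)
Your proposal is correct and tracks the paper's own proof essentially step for step: start from \eqref{1649}, apply Lemma \ref{0450} and \eqref{0506} to collapse the density to $e^{-\delta t_1}d\mu_{H_1}^{\ps}dzdt_1$, lift to $G$ using right $M$-invariance in the $H_1ANM$ coordinates, switch to $H_1MAN$ via $h_1a_{t_1}n_zm=h_1ma_{t_1}n_{m^{-1}z}$, and absorb $M$ into $H$ using $d\mu_H^{\ps}(h_1m)=d\mu_{H_1}^{\ps}(h_1)\,dm$. The two small points you flag explicitly — the rotation $z\mapsto m^{-1}z$ having trivial Jacobian, and the $\widetilde m^{\br}$-nullity of the complement of the $\rho_2$-image — are used implicitly in the paper, so your write-up is, if anything, slightly more careful.
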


\section{Equidisribution of Expanding Horospheres}\label{equidstribution}
 
The purpose of this section is to prove Theorem \ref{07141}, which is an extension of Theorem \ref{03211}.  \par
Let $\mathcal W$ be the set of pairs $(f,\Psi)$ satisfying:
\begin{equation}
\mathcal W:=\left\{(f,\Psi):\begin{array}{@{}ll@{}}f\in L^1(\Gamma\backslash \Gamma N, \mu_{N}^{\text{PS}}), \Psi\in L^1(\Gamma\backslash G,m^{\text{BR}}), \\ \lim_{t\rightarrow\infty}e^{(2-\delta)t}\int_{\Gamma\backslash\Gamma N}\Psi(na_{t})f(n)d\mu_N^{\text{Leb}}(n)=\frac{m^{\rm{BR}}(\Psi)\mu_{N}^{\rm{PS}}(f)}{m^{\rm{BMS}}(\Gamma\backslash G)}\end{array}\right \}.
\end{equation}
We first observe that $\mathcal W$ inherit some linear structure:
\begin{enumerate}[(i)]
\item If $(f,\Psi)\in\mathcal W$, then for any $\alpha_1,\alpha_2\in\CC$, $(\alpha_1 f, \alpha_2 \Psi)\in\mathcal W$.
\item If $(f_1,\Psi), (f_2,\Psi)\in\mathcal W$, then  $(f_1+f_2, \Psi)\in\mathcal W$.
\item If $(f,\Psi_1), (f,\Psi_2)\in\mathcal W$, then $(f,\Psi_1+\Psi_2)\in \mathcal W$.
\end{enumerate}

The smoothness assumption in Theorem \ref{03211} is for obtaining an effective convergence rate. This is not needed for our purpose here. 

By the same method from \cite{OS12}, one can extend Theorem \ref{03211} to $\Psi\in C_c(\Gamma\backslash G)$ and $f\in L^1(\Gamma\backslash \Gamma N, \mu_{N}^{\text{PS}})$ with $\lim_{\epsilon\rightarrow 0}\mu_{N}^{\text{PS}}(f_{\epsilon^+}-f_{\epsilon^-})=0$, where
\al{\label{1000}
f_{\epsilon^+}(n_z):=\sup_{|w-z|<\epsilon}f(n_w),\\
f_{\epsilon^-}(n_z):=\inf_{|w-z|<\epsilon}f(n_w).\label{0545}
} \par

However, this is still not enough for our purpose.  We need to extend Theorem \ref{03211} to cover some nonnegative functions $f$ and $\Psi$, with $\Psi\in L^1(\Gamma\backslash G, m^{\text{BR}})$ and non-compactly supported.  Indeed, in the lattice case, the measure $m^{\text{BMS}}$ on $\Gamma\bs G$ is just the Haar measure, and Shah obtained Theorem \ref{03211} for any $f\in L^1(\Gamma\backslash\Gamma N, m_N^{\text{Haar}})$ and $\Psi\in L^1(\Gamma\backslash G, m_G^{\text{Haar}})$ \cite{Sh91}. However, it seems that removing the compactly supported assumption for $\Psi$ in the infinite co-volume situation is a much more delicate issue.  In fact in the works \cite{KO11}, \cite{OS13}, \cite{MO15}, which deal with the infinite co-volume situation, the compactly supported assumption seems crucially used in proving the equidisribution theorems of expanding horospheres.  To see one subtlety here, compared to the lattice case, in the statement of Theorem \ref{03211}, we have an extra factor $e^{(2-\delta)t}$, which goes to infinity as $t$ does.  We haven't been able to fully extend  Theorem \ref{03211} to cover $\Psi\in L^1(\Gamma\bs G, m^{BR})$, and we circumvent this difficulty by observing some hierarchy structure in the set $\mathcal W$ (Proposition \ref{1426}), which is enough for our purpose.  \par

In Section \ref{1315} we prove the membership of certain pairs in $\mathcal W$ using Theorem \ref{0827}, in Section \ref{1317} we prove some hierarchy structure in $\mathcal{W}$, and in Section \ref{1320} we finish the proof of Theorem \ref{07141}.  

\subsection{Membership of certain pairs in $\mathcal W$\label{1315}}
Let $E\subset\CC$ be an open set with $\partial E$ empty or piecewise smooth, and let $\Omega\subset\CC$ be a bounded open set with $\partial \Omega$ piecewise smooth.   
First we claim that $(f_0, \Psi_0)\in\mathcal W$, where 
\al{\label{2241}&f_0(n_z):=\chi_E(z),\\
&\label{2242}\Psi_0(g):=\sum_{\gamma\in \Gamma/\Gamma_H}\bd{1}\{\bd{q}(g^{-1}\gamma)\in {\Omega^*}   \},}
recalling that $\Omega^*$ is the infinite chimney based at $\Omega$ (see \eqref{0530}). 
We will see shortly that the pair $(f_0, \Psi_0)$ is related to counting circles in $E$.\par

We first calculate the right hand side of \eqref{1258} with $f=f_0$ and $\Psi=\Psi_0$.

Write $g=h_1ma_t n_z$ in the $H_1MAN$ coordinate.  From Proposition \ref{0130}, 
\al{\nonumber
m^{\text{BR}}(\Psi_0)=&\int_{g\in\Gamma\backslash G}\sum_{\gamma\in\Gamma_H\backslash\Gamma}\bd{1}\{\bd{q}((\gamma g)^{-1})\in\Omega^*\}d m^{\text{BR}}(g) \\
\nonumber
=&\int_{g\in\Gamma_H\backslash G}\bd{1}\{\bd{q}( g^{-1})\in\Omega^*\}dm^{\text{BR}}(g)\\
\nonumber
=&\int_{h\in\Gamma_H\backslash H} \int_{t\in\RR}\int_{z\in\CC} \bd{1}\{\bd{q}( (ha_tn_z)^{-1})\in\Omega^*\}\cdot e^{-\delta t}dzdtd\mu_{H}^{\rm {PS}}(h)\\
\nonumber
=&\int_{h\in\Gamma_H\backslash H}\int_{t>0}\int_{z\in-\Omega}  e^{-\delta t} dzdtd\mu_{H}^{\rm {PS}}(h)\\
=&\frac{1}{\delta}\cdot \mathcal Area(\Omega)\mu_{H}^{\text{PS}}(\Gamma_H\backslash H).
}

Next, we have $\mu_{N}^{\text{PS}}(f_0)=w(E)$, recalling that the measure $w$ on $\CC$ is the pull back measure of $\mu_{N}^{\text{PS}}$ under the map $z\rightarrow n_z$.  We also have $m^{BMS}(\Gamma\backslash G)= \widetilde{m}^{BMS}(\Gamma\backslash \HH^3)$ and $\mu_{H}^{\ps}(\Gamma\bs H)=\mu_{H_1}^{\ps}(\Gamma\bs H_1)$.  Therefore, 
\al{\label{1750}
\frac{m^{\rm{BR}}(\Psi_0)\mu_{N}^{\rm{PS}}(f_0)}{m^{\rm{BMS}}(\Gamma\backslash G)}=\frac{\mathcal Area(\Omega)\mu_{H_1}^{\text{PS}}(\Gamma_H\backslash H_1)w(E)}{\delta \cdot \widetilde{m}^{\text{BMS}}(\Gamma\backslash \HH^3)}.
}

We now turn to the left hand side of \eqref{1258}.  Recall that $\Gamma\bs\Gamma N=N$ as $\Gamma\cap N=\{e\}$.  We have 
\al{\label{0920}\nonumber
&e^{(2-\delta)t}\int_{N} f_0(n_z)\Psi _0(n_za_t)dz\\
\nonumber=& e^{(2-\delta)t}\int_{ N} \chi_E(z)\sum_{\gamma\in\Gamma/\Gamma_H}\bd{1}\{\bd{q}(a_{-t}n_{-z}\gamma)\in \Omega^* \}dz\\
=&e^{(2-\delta)t}\int_{ N} \chi_E(z)\sum_{\gamma\in\Gamma/\Gamma_H}\bd{1}\{ z_\gamma-z\in e^{-t}\Omega; r_\gamma>e^{-t} \}dz.
}
where we wrote $\bd{q}(\gamma)=z_\gamma+r_\gamma \bd{j}$. \par
Let $N(E, t):=\#\mathcal C_t\cap E$, and denote the diameter of $\Omega$ by $\mathcal D(\Omega)$.  For any $\epsilon>0$, we let 
\aln{
&E_{\epsilon^+}:=\{x\in\CC: d(x,E)<\epsilon   \},\\
&E_{\epsilon^-}:=\{x\in E: d(x,\partial E)>\epsilon\}.
}

We have 
\al{\label{0115}
e^{-\delta t}\mathcal Area(\Omega)\cdot N(E_{(e^{-t}\mathcal D(\Omega))^-},t)\leq \eqref{0920}\leq e^{-\delta t}\mathcal Area(\Omega)\cdot N(E_{(e^{-t}\mathcal D(\Omega))^+},t).
}

The quantity $N(*,t)$ can be estimated via the following more detailed version of Theorem \ref{0827}:
\begin{thm}[Oh-Shah, Theorem 1.6, \cite{OS12}]\label{circlecounting} Let $\mathcal{P}$ be a bounded Apollonian circle packing.  Let $E\subset \CC$ be an open set with no boundary or piecewise smooth boundary.  Then 
 $$\lim_{t\rightarrow\infty}\frac{N(E,t)}{e^{\delta t}}=\frac{\mu_{H_1}^{\text{PS}}(\Gamma_H\backslash H_1)w(E)}{\delta \cdot m^{\rm{BMS}}(\Gamma\backslash \HH^3)}.$$
\end{thm}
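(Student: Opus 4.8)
The plan is to recover this refined form of \cite{OS12} by making the constant in the Oh--Shah circle count explicit, reducing $N(E,t)$ to an equidistribution statement that splits, in the $HAN$ coordinates, exactly along the three factors in the answer. Since $\Gamma$ acts transitively on $\mathcal P$ and $H=\mathrm{Stab}(C(0,1))$, the circles are indexed by $\Gamma/\Gamma_H$ via $\gamma\mapsto\bd q(\gamma)$ (as in \eqref{0428}): a circle has center in $E$ and radius $>e^{-t}$ exactly when $\bd q(\gamma)$ lies in the downward chimney $E\times(e^{-t},\infty)$, and applying $a_{-t}$ to normalize the height cutoff to $1$ rewrites $N(E,t)$ as the number of points of the expanded orbit $a_{-t}(\Gamma_H\backslash\Gamma)$ inside the standard chimney over $e^tE$ --- the setting of Oh--Shah's equidistribution of expanding translates. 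Equivalently (the route most compatible with the present paper's machinery) one may bracket $N(E_{\epsilon^\mp},t)$ between the horosphere integrals $e^{(2-\delta)t}\int_N f_0\,\Psi_0(\cdot\,a_t)$ with $\Omega$ a ball of radius $\epsilon$, exactly as in the sandwich \eqref{0115}, and then truncate $\Psi_0$ of \eqref{2242} to finitely many $\gamma$, mollify, and apply the $C_c$-version of Theorem \ref{0321} available by the method of \cite{OS13}.

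Either way, the analytic input is that for $\Phi\in C_c(\Gamma\backslash G)$,
\[
\lim_{t\to\infty}e^{(2-\delta)t}\int_{\Gamma_H\backslash H}\Phi(h a_t)\,d\mu_H^{\ps}(h)=\frac{m^{\br}(\Phi)\,\mu_H^{\ps}(\Gamma_H\backslash H)}{m^{\bms}(\Gamma\backslash G)},
\]
which comes from the mixing of the $a_t$-flow for $m^{\bms}$ (available since $\Gamma$ is geometrically finite, so $0<m^{\bms}(\Gamma\backslash G)<\infty$) by a standard thickening argument, and genuinely uses the finiteness $|\mu_H^{\ps}|<\infty$ established in Section \ref{0129}. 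Feeding an approximation to $\chi_E$ into this and unfolding $\Gamma_H\backslash G$ over $\Gamma\backslash G$ in the $HAN$ coordinates, the $A$-factor contributes $\int_0^\infty e^{-\delta s}\,ds=\tfrac1\delta$, the $N$-factor contributes $\mu_N^{\ps}(\chi_E)=w(E)$, and the $H$-factor contributes $\mu_H^{\ps}(\Gamma_H\backslash H)=\mu_{H_1}^{\ps}(\Gamma_H\backslash H_1)$ (cf.\ the computation of $m^{\br}(\Psi_0)$ in the excerpt and Proposition \ref{0130}); together with $m^{\bms}(\Gamma\backslash G)=\widetilde m^{\bms}(\Gamma\backslash\HH^3)$ this is precisely the stated constant. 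To descend from continuous test functions to the sharp counting region I would sandwich $\chi_E$ between smooth functions supported on $E_{\epsilon^+}$ and $E_{\epsilon^-}$ and let $\epsilon\to0$; the boundary costs nothing because the transverse measure reduces to $\nu_{\bd j}$, which by \eqref{2232} and the packing argument below it vanishes on the piecewise-smooth curve $\partial E$, so $w(E_{\epsilon^+}\setminus E_{\epsilon^-})\to0$.

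The main obstacle is the one that gives \cite{OS12} its substance: uniform control of mass near the cusp. Whether one uses the $C_c$ mixing statement or the truncation of $\Psi_0$, one must know that far-away circles --- equivalently, cuspidal excursions of the expanding translate $\Gamma_H\backslash H a_t$ in $\Gamma\backslash G$, and of $\chi_E$ near the cusp of $\Gamma_H\backslash H$ --- contribute an amount that tends to $0$ with the truncation parameter \emph{uniformly in $t$}. This is exactly where the rank-one hypothesis on the cusp in both $\Gamma$ and $\Gamma_H$ (equivalently, finiteness of the skinning measure $\mu_H^{\ps}$, Section \ref{0129}) is used, through Sullivan-type shadow estimates for $\nu_{\bd j}$ around the parabolic fixed points; once the cuspidal part is dominated by such a bound, the compactly supported equidistribution applies to the remaining bounded piece and the theorem follows.
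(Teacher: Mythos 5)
This statement is quoted verbatim from Oh--Shah \cite{OS12}, Theorem 1.6; the present paper does not prove it and in fact \emph{uses} it as an input: the horosphere limit \eqref{0120} for the pair $(f_0,\Psi_0)$ in Section~\ref{1315} is derived \emph{from} this theorem, not the reverse. Your second route --- equidistribution of the expanding translates $\Gamma_H\bs Ha_t$ under mixing of the $a_t$-flow for $m^{\bms}$, unfolded so that the $A$, $N$, $H$ factors contribute $1/\delta$, $w(E)$, and $\mu_H^{\ps}(\Gamma_H\bs H)$ --- is the right shape of the actual Oh--Shah argument, and the resulting constant matches after the identifications $\mu_H^{\ps}(\Gamma_H\bs H)=\mu_{H_1}^{\ps}(\Gamma_H\bs H_1)$ and $m^{\bms}(\Gamma\bs G)=\widetilde m^{\bms}(\Gamma\bs\HH^3)$ noted in the paper.

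Two cautions. Your ``route most compatible with the present paper's machinery'' --- sandwiching via \eqref{0115} and then truncating $\Psi_0$ to finitely many $\gamma$, mollifying, and applying the $C_c$ version of Theorem~\ref{03211} --- should not be mistaken for a shortcut. Within the paper's own logic the implication runs in the opposite direction, and Section~\ref{equidstribution} explicitly disclaims being able to push Theorem~\ref{03211} to non-compactly-supported $\Psi$ in the infinite-covolume setting; that inability is the whole reason Proposition~\ref{1426} and the class $\mathcal W$ were introduced. To make your truncation rigorous you would need a bound on the tail of $\Psi_0(n_za_t)$ that is uniform in $t$, i.e.\ on the mass the expanding horosphere deposits near the cusp, and that is precisely the estimate being assumed. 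Relatedly, you correctly identify this uniform cuspidal control as the technical heart of \cite{OS12} and correctly point to Sullivan-type shadow bounds for $\nu_{\jj}$ at the rank-one parabolic fixed point, but the proposal does not actually produce them. As written this is an accurate road map to where the Oh--Shah proof lives rather than a self-contained argument, which is consistent with the paper's decision to cite the result rather than reprove it.
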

So comparing Theorem \ref{0827} and Theorem \ref{circlecounting}, we can see $v(E)$ and $w(E)$ are off by a constant factor:

$$v(E)=\frac{\mu_{H_1}^{\text{PS}}(\Gamma_H\backslash H_1)}{\delta \cdot m^{\rm{BMS}}(\Gamma\backslash \HH^3)}w(E).$$

Applying Theorem \ref{circlecounting} to \eqref{0115} with $E$ replaced by $E_{\epsilon^\pm}$, we have
\al{\label{1251} \lim_{t\rightarrow\infty}\frac{N(E_{\epsilon^\pm},t)}{e^{\delta t}}=\frac{\mu_{H_1}^{\text{PS}}(\Gamma_H\backslash H_1)w(E_{\epsilon^\pm})}{\delta \cdot m^{\text{BMS}}(\Gamma\backslash \HH^3)}.
}

Noting that $\lim_{\epsilon\rightarrow0}w(E_{\epsilon^+})-w(E_{\epsilon^-})=0$ as $\partial E$ is piecewise smooth, and letting $t$ goes to infinity for \eqref{0115}, we obtain
\al{\label{0120}\lim_{t\rightarrow\infty}e^{(2-\delta)t}\int_{N} f_0(n_z)\Psi _0(n_za_t)dz=\frac{\mathcal Area(\Omega)\mu_{H_1}^{\text{PS}}(\Gamma_H\backslash H_1)w(E)}{\delta \cdot m^{\text{BMS}}(\Gamma\backslash \HH^3)},}
which agrees with \eqref{1750}.\par

\subsection{The hierarchy structure in $\mathcal W$\label{1317}} For any $\Psi\in L^1(\Gamma\backslash G)$, let $\mathcal Supp(\Psi)$ be the support of $\Psi$ and $\mathcal Disc(\Psi)$ be the set of discontinuities of $\Psi$.  We aim to prove the following proposition. \par

\begin{prop}\label{1426} Suppose $f\in L^1(\Gamma\bs \Gamma N,\mu_N^{\ps})$, nonnegative, and $\lim_{\epsilon\rightarrow 0}\mu_{N}^{\text{PS}}(f_{\epsilon^+}-f_{\epsilon^-})=0$.  Suppose $\Psi\in L^1(\Gamma\bs G, m^{\rm {BR}})$, nonnegative, $\Vert\Psi\Vert_{L^\infty}<\infty$, and $m^{\rm {BR}}\left(\overline{\mathcal Disc (\Psi)}\right)=0$.  If $(f,\Psi)\in \mathcal W$, then for any Borel measurable function $\widetilde{\Psi}$ with $0\leq \widetilde{\Psi}\leq \Psi$ and $m^{\rm {BR}}\left(\overline{\mathcal Disc (\widetilde{\Psi})}\right)=0$, we have $(f,\widetilde{\Psi})\in \mathcal W$.
\end{prop}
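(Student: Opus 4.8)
\textbf{Proof proposal for Proposition \ref{1426}.}

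The plan is to realize $\widetilde{\Psi}$ as a suitable limit of continuous compactly supported functions that are dominated by (a fixed enlargement of) $\Psi$, and to transfer the equidistribution statement through this approximation using the hypothesis $(f,\Psi)\in\mathcal W$ as a uniform majorant. First I would fix $\varepsilon>0$ and, using the regularity of the Burger--Roblin measure together with the assumptions $\|\Psi\|_{L^\infty}<\infty$ and $m^{\br}(\overline{\mathcal Disc(\widetilde\Psi)})=0$, choose an open set $U\supset \overline{\mathcal Disc(\widetilde\Psi)}$ with $m^{\br}(U)$ small and a continuous function $\phi\in C_c(\Gamma\bs G)$ with $0\le\phi\le 1$, $\phi\equiv 1$ off $U$, so that $\widetilde\Psi\,\phi$ is continuous wherever $\widetilde\Psi$ is, i.e. $\overline{\mathcal Disc(\widetilde\Psi\phi)}\subset \overline U$. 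Then $|\widetilde\Psi-\widetilde\Psi\phi|\le \|\Psi\|_{L^\infty}\mathbf 1_U$ has small $m^{\br}$-integral, so it suffices to prove the statement for $\widetilde\Psi\phi$ — a bounded function with small-measure discontinuity set and compact support. At this point one is reduced to the case of a bounded, compactly supported, a.e.-continuous $\widetilde\Psi$, which can be sandwiched between continuous compactly supported functions $g^-\le\widetilde\Psi\le g^+$ with $m^{\br}(g^+-g^-)$ arbitrarily small (Lusin/Urysohn, using $m^{\br}(\overline{\mathcal Disc(\widetilde\Psi)})=0$ to control the approximation on the discontinuity set), and $0\le g^-$, $g^+\le \|\Psi\|_{L^\infty}+1$.

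Next I would run the standard sandwich argument. Since $g^-\le \widetilde\Psi\le g^+$ pointwise,
\al{\label{sandwichprop}
e^{(2-\delta)t}\!\!\int_{\Gamma\bs\Gamma N}\!\! g^-(na_t)f(n)\,d\mu_N^{\text{Leb}}(n)\le e^{(2-\delta)t}\!\!\int_{\Gamma\bs\Gamma N}\!\!\widetilde\Psi(na_t)f(n)\,d\mu_N^{\text{Leb}}(n)\le e^{(2-\delta)t}\!\!\int_{\Gamma\bs\Gamma N}\!\! g^+(na_t)f(n)\,d\mu_N^{\text{Leb}}(n).
}
For the outer two integrals I want to apply the continuous, compactly supported version of Theorem \ref{03211} that the paper has already invoked (the extension to $\Psi\in C_c(\Gamma\bs G)$ and $f\in L^1$ with $\lim_{\epsilon\to0}\mu_N^{\ps}(f_{\epsilon^+}-f_{\epsilon^-})=0$, established by the method of \cite{OS12}). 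This gives
\aln{
\lim_{t\to\infty} e^{(2-\delta)t}\!\!\int_{\Gamma\bs\Gamma N}\!\! g^{\pm}(na_t)f(n)\,d\mu_N^{\text{Leb}}(n)=\frac{m^{\br}(g^{\pm})\,\mu_N^{\ps}(f)}{m^{\bms}(\Gamma\bs G)}.
}
Taking $\liminf$ and $\limsup$ in \eqref{sandwichprop}, the limit of the middle expression is trapped in an interval of length $\le \mu_N^{\ps}(f)\,m^{\br}(g^+-g^-)/m^{\bms}(\Gamma\bs G)$ around $m^{\br}(\widetilde\Psi\phi)\mu_N^{\ps}(f)/m^{\bms}(\Gamma\bs G)$ (using $m^{\br}(g^-)\le m^{\br}(\widetilde\Psi\phi)\le m^{\br}(g^+)$), and letting $\varepsilon\to0$ along the approximations forces convergence to $m^{\br}(\widetilde\Psi\phi)\mu_N^{\ps}(f)/m^{\bms}(\Gamma\bs G)$. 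Finally undoing the $\phi$-truncation, and using $|m^{\br}(\widetilde\Psi)-m^{\br}(\widetilde\Psi\phi)|\le \|\Psi\|_{L^\infty}m^{\br}(U)\to0$ on the right and a matching bound $e^{(2-\delta)t}\int \widetilde\Psi(1-\phi)(na_t)f(n)\,d\mu_N^{\text{Leb}}(n)\le e^{(2-\delta)t}\int \Psi(1-\phi)(na_t)f(n)\,d\mu_N^{\text{Leb}}(n)$ on the left — whose $\limsup$ is $m^{\br}(\Psi(1-\phi))\mu_N^{\ps}(f)/m^{\bms}(\Gamma\bs G)$, again small, \emph{because} $(f,\Psi)\in\mathcal W$ and $(f,\Psi\phi)\in\mathcal W$ by the $C_c$-case so $(f,\Psi(1-\phi))\in\mathcal W$ by the linearity properties (ii)–(iii) of $\mathcal W$ — yields $(f,\widetilde\Psi)\in\mathcal W$.

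The main obstacle is precisely the last point: one cannot bound the tail $e^{(2-\delta)t}\int\widetilde\Psi(1-\phi)(na_t)f(n)\,d\mu_N^{\text{Leb}}$ directly, because $\widetilde\Psi(1-\phi)$ is neither compactly supported nor known to sit in $\mathcal W$, and the prefactor $e^{(2-\delta)t}\to\infty$ forbids any naive $L^1$ estimate — this is exactly the subtlety the paper flags about the infinite-volume setting. The resolution has to route through the hypothesis $(f,\Psi)\in\mathcal W$: from $0\le\widetilde\Psi(1-\phi)\le\Psi(1-\phi)=\Psi-\Psi\phi$, and from $(f,\Psi),(f,\Psi\phi)\in\mathcal W$ (the latter by the already-available $C_c$ extension, since $\Psi\phi\in C_c$ up to a small-measure, hence $m^{\br}$-null after the argument above, discontinuity set — so in fact one applies the \emph{bounded} $C_c$-a.e.-continuous case just proven), property (iii) gives $(f,\Psi-\Psi\phi)\in\mathcal W$, i.e. the tail integral converges to $m^{\br}(\Psi(1-\phi))\mu_N^{\ps}(f)/m^{\bms}(\Gamma\bs G)$, which is $O(\|\Psi\|_{L^\infty}m^{\br}(U))$ and thus negligible as $\varepsilon\to0$. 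Care must be taken with the logical ordering — the bounded a.e.-continuous compactly supported case must be closed off first (via the $g^\pm$ sandwich and the plain $C_c$-theorem), and only then can it be fed back to handle $\Psi\phi$ and hence the tail — but there is no circularity, since each stage strictly enlarges the class of admissible $\Psi$.
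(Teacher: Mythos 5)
Your core strategy is the right one, and it is essentially the paper's: use the hypothesis $(f,\Psi)\in\mathcal W$ together with the linearity properties (i)--(iii) of $\mathcal W$ to control the non-compact tail (since a naive $L^1$ bound is destroyed by the $e^{(2-\delta)t}\to\infty$ prefactor), and approximate from below by continuous compactly supported functions to which the already-available $C_c$ extension of Theorem~\ref{03211} applies. The inequality $0\le\widetilde\Psi(1-\phi)\le\Psi(1-\phi)=\Psi-\Psi\phi$, with $(f,\Psi-\Psi\phi)\in\mathcal W$ obtained by subtraction, is exactly the paper's tail estimate.

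There is, however, a genuine gap in the construction of $\phi$: you ask for $\phi\in C_c(\Gamma\bs G)$ with $0\le\phi\le1$ and $\phi\equiv 1$ off $U$. These two requirements are incompatible in the present setting. If $\phi\equiv 1$ off $U$ then $\Gamma\bs G - U\subset\mathcal Supp(\phi)$, so $\Gamma\bs G - U$ would have to be compact; but $m^{\rm BR}$ has infinite total mass on $\Gamma\bs G$ (the group is of infinite covolume), $m^{\rm BR}$ is locally finite, and $m^{\rm BR}(U)$ is small --- so $m^{\rm BR}(\Gamma\bs G - U)=\infty$ and the complement cannot be compact. Everything downstream that uses ``$1-\phi$ is supported in $U$'' (the estimates $|m^{\rm BR}(\widetilde\Psi)-m^{\rm BR}(\widetilde\Psi\phi)|\le\|\Psi\|_{L^\infty}m^{\rm BR}(U)$ and $m^{\rm BR}(\Psi(1-\phi))=O(\|\Psi\|_{L^\infty}m^{\rm BR}(U))$) breaks down. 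The fix is precisely the paper's: first cut off by a compact set. Using regularity of $m^{\rm BR}$ and $\Psi\in L^1(m^{\rm BR})$, pick a compact $K$ with $\int_{\Gamma\bs G-K}\Psi\,dm^{\rm BR}$ small, a relatively compact open $V\supset K$, and open $U\subset\overline U\subset U'$ around $\overline{\mathcal Disc(\Psi)}\cup\overline{\mathcal Disc(\widetilde\Psi)}$ with $m^{\rm BR}(U')$ small; then take $\phi$ (the paper's $\Phi_\epsilon$) with $\phi\equiv1$ on $K-U'$ and $\phi\equiv0$ on $\overline U\cup(\Gamma\bs G-V)$. Then $m^{\rm BR}(\Psi-\Psi\phi)$ is controlled by the $L^1$ tail over $\Gamma\bs G - K$ plus $\|\Psi\|_{L^\infty}m^{\rm BR}(U')$, both small, and your argument goes through. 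Also note that by making $\phi$ vanish near $\overline{\mathcal Disc(\Psi)}\cup\overline{\mathcal Disc(\widetilde\Psi)}$, the products $\Psi\phi$ and $\widetilde\Psi\phi$ become genuinely continuous and compactly supported, so you can apply the $C_c$ extension directly and dispense with the two-sided $g^\pm$ Lusin/Urysohn sandwich; the paper instead uses $\widetilde\Psi_\epsilon:=\min\{\Psi\phi,\widetilde\Psi\}$ as the one-sided $C_c$ minorant, which is continuous for the same reason and gives $\widetilde\Psi-\widetilde\Psi_\epsilon\le\Psi-\Psi\phi$ immediately.
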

\begin{proof}
First we prove the following claim.\par
\noindent$\bd{Claim}$: for an $\epsilon>0$, there exits $\Psi_\epsilon \in C_c(\Gamma\backslash G, m^{\text{BR}})$ such that $0\leq\Psi_\epsilon\leq\Psi$, $m^{\text{BR}}(\Psi-\Psi_\epsilon)<\epsilon$ and $\Psi_\epsilon$ is supported away from the discontinuities of $\Psi$ and $\widetilde{\Psi}$. \par
Since $\Gamma\backslash G$ is second countable and $m^{\text{BR}}$ is a regular Borel measure on $\Gamma\bs G$, we can find a compact set $K_{\epsilon}\subset \Gamma\backslash G$ such that 
$$\int_{\Gamma\bs G-K_{\epsilon}}\Psi(g)dm^{\br}(g)<\epsilon/2.$$
We also choose a relatively compact open set $V_\epsilon\subset\Gamma\bs G$ such that $K_\epsilon\subset V_\epsilon$.\par 

Since $m^{\br}$ is a regular Borel measure on $\Gamma\bs G$ and $$m^{\br}\left(\overline{\mathcal Disc(\Psi)} \cup \overline{\mathcal Disc(\widetilde{\Psi})}\right)=0,$$ we can find two open sets $U_\epsilon,U_\epsilon'\subset \Gamma\bs G$ such that 
$$\overline{\mathcal Disc(\Psi)} \cup \overline{\mathcal Disc(\widetilde{\Psi})}\subset U_\epsilon\subset \overline{U_\epsilon}\subset U_\epsilon' $$
and
$$m^{\br}(U_\epsilon')<\frac{\epsilon}{2\max\{1,\Vert\Psi\Vert_{L^\infty}\}}.$$
From the Tietze Extension Theorem, there exists a function $\Phi_{\epsilon}\subset C(\Gamma\bs G)$ such that $0\leq \Phi_\epsilon\leq 1$, $\Phi_\epsilon \equiv 1 $ on $K_{\epsilon}-U_\epsilon'$ and $\Phi_\epsilon\equiv 0$ on $\overline{U_\epsilon}\cup (\Gamma\bs G-V_\epsilon)$. \par
Now set $\Psi_\epsilon=\Psi\cdot\Phi_\epsilon$, then we can see that $\Psi_\epsilon$ is compactly supported as $\Phi_\epsilon$ is, $\Psi_\epsilon$ is continuous as ${\mathcal Supp}(\Psi_\epsilon)\cap \mathcal Disc(\Psi)=\emptyset$, and $0\leq \Psi_\epsilon\leq \Psi$.  Therefore, 
\al{\nonumber\int_{\Gamma\bs G}\Psi(g)-\Psi_\epsilon(g)dm^{\text{BR}}(g)&\leq  \int_{\Gamma\bs G-K_{\epsilon}}\Psi(g)-\Psi_\epsilon(g)dm^{\text{BR}}(g)+ \int_{ U_\epsilon'}\Psi(g)-\Psi_\epsilon(g)dm^{\text{BR}}(g)  \\
&< \epsilon/2+\epsilon/2=\epsilon,
}
finishing the proof of the claim.  \par

Next, according to the comment around \eqref{1000}, for each $\epsilon$, $(f,\Psi_\epsilon)\in\mathcal W$.  Therefore, $(f,\Psi-\Psi_\epsilon)\in\mathcal W$, so that
$$\lim_{t\rightarrow\infty}e^{(2-\delta)t}\int_{\Gamma\bs\Gamma N}f(n)(\Psi-\Psi_\epsilon)(n_za_t)d\mu_N^{\normalfont\text{Leb}}(n)\leq \frac{\epsilon\cdot\mu_{N}^{\text{PS}}(f) }{m^{\bms}(\Gamma\bs G)}. $$

Define $\widetilde{\Psi}_\epsilon(g):=\min\{\Psi_\epsilon(g), \widetilde{\Psi}(g)\}$.  We have $\widetilde{\Psi}_\epsilon\in C_c(\Gamma\bs G)$, so that $(f,\widetilde{\Psi}_\epsilon)\in\mathcal W$, or \par 
\al{\label{0520}
 \lim_{t\rightarrow\infty}e^{(2-\delta)t}\int_{\Gamma\bs\Gamma N}f(n)\widetilde{\Psi}_\epsilon(n_za_t)d\mu_N^{\normalfont\text{Leb}}(n)=\frac{m^{\br}(\widetilde{\Psi}_\epsilon)\mu_{N}^{\text{PS}}(f) }{m^{\bms}(\Gamma\bs G)}. 
}

We also have 
\al{\label{0528}
\int_{\Gamma\bs G}(\widetilde{\Psi}(g)-\widetilde{\Psi}_\epsilon(g))dm^{\br}(g)\leq \int_{\Gamma\bs G}(\Psi(g)-\Psi_\epsilon(g))dm^{\br}(g)<\epsilon,
}
and

\al{\label{0521}\nonumber
&\hspace{5mm} \limsup_{t\rightarrow\infty}e^{(2-\delta)t}\int_{\Gamma\bs\Gamma N}f(n)(\widetilde{\Psi}-\widetilde{\Psi}_\epsilon)(n_za_t)d\mu_N^{\normalfont\text{Leb}}(n) \\
\nonumber&\leq \lim_{t\rightarrow\infty}e^{(2-\delta)t}\int_{\Gamma\bs\Gamma N}f(n)(\Psi-\Psi_\epsilon)(n_za_t)d\mu_N^{\normalfont\text{Leb}}(n) \\
&\leq \frac{ \epsilon\cdot\mu_{N}^{\text{PS}}(f) }{m^{\bms}(\Gamma\bs G)}.
}

Combining \eqref{0528}, \eqref{0521} and \eqref{0520}, and letting $\epsilon\rightarrow 0$, we obtain
 \al{\lim_{t\rightarrow\infty}e^{(2-\delta)t}\int_{\Gamma\bs\Gamma N}f(n)\widetilde{\Psi}(n_za_t)d\mu_N^{\normalfont\text{Leb}}(n)=\frac{m^{\br}(\widetilde{\Psi})\mu_{N}^{\text{PS}}(f) }{m^{\bms}(\Gamma\bs G)},}
 so that $(f,\widetilde{\Psi})\in\mathcal W$.

 \end{proof}

\subsection{Finishing the proof of Theorem \ref{07141}\label{1320}} We begin with an elementary geometric observation, which implies that any pair of points from $\mathcal{C}_t$ can not get too close. \par
\noindent $\bd{Observation}:$  For any two non-intersecting hemispheres based on $\CC$, the Euclidean distance of their apices $q_1,q_2$ satisfies 
\al{\label{2227}\vert\Re q_1-\Re q_2\vert \geq \Im q_1+\Im q_2.}
And from the hyperbolic distance formula,
\al{\label{2228}d(q_1,q_2)=\text{Arccosh}\left(1+\frac{|q_1-q_2|^2}{2\Im q_1\Im q_2}\right)\geq \text{Arccosh}\left(1+\frac{(\Im q_1+\Im q_2)^2}{2\Im q_1\Im q_2}\right)\geq \text{Arccosh} (3).}

From the observation \eqref{2227},  if $\bd{q}(g^{-1}\gamma_1),\bd{q}(g^{-1}\gamma_2)\in \Omega_i^*$ for $\gamma_1\neq\gamma_2\in \Gamma/\Gamma_H$, then $|\Re(q_1)-\Re(q_2)|\geq 2$.  For each $\gamma\in\Gamma/\Gamma_H$ with $\Im(\bd{q}(g^{-1}\gamma))>1$, place a circle of radius 1 centered at $\bd{q}_{\Re}(g^{-1}\gamma)$, then these circles are disjoint.  By an elementary packing argument, we have 
\al{\label{1440}
\#\bd{q}(g^{-1}\Gamma)\cap \Omega_i^* < \frac{\pi(\mathcal D(\Omega_i)+1)^2}{\pi}=(\mathcal D(\Omega_i)+1)^2.
}\\

The functions we are interested in are $f=\chi_{E}$ and $\Psi=F_{\bd{\Omega},\bd{r}}, F_{\bd{\Omega}}^{\boldsymbol{\beta}}$.

Suppose $r_j$ is a nonzero component of $\bd{r}$, then we have
\al{\label{1437}
F_{\bd{\Omega},\bd{r}}(g)=\prod_{1\leq i\leq k} \bd{1}\{\# \bd{q}(g^{-1}\Gamma/\Gamma_H)\cap\Omega_i^*=r_i\}\leq \#(\bd{q}(g^{-1}\Gamma/\Gamma_H)\cap \Omega_j^*),
}
and 
\al{\label{1441}
F_{\bd{\Omega}}^{\boldsymbol{\bd{\beta}}}(g)=\prod_{1\leq i\leq k} \#(\bd{q}(g^{-1}\Gamma/\Gamma_H)\cap \Omega_i^*)^{\beta_i}=\sum_{\bd{r}>0}\bd{r}^{\boldsymbol{\beta}}F_{\bd{\Omega},\bd{r}},
}
where $\bd{r}^{\boldsymbol{\beta}}(g)=\prod_{1\leq i\leq k}r_i^{\beta_i}$, and $\bd{r}>0$ means all components of $\bd{r}$ are nonnegative, and at least one component of $\bd{r}$ is positive. 

We notice that the right hand side of \eqref{1437} is of the form $\Psi_0$ (see \eqref{2242}), and the rightmost sum in \eqref{1441} is a finite sum because of \eqref{1440}. So both $F_{\boldsymbol{\Omega},\bd{r}}$ and $F_{\bd{\Omega}}^{\boldsymbol{\beta}}$ are dominated by (a finite linear combination of) $\Psi_0$.  Therefore, we can apply Proposition \ref{1426} to $f=\chi_E,\Psi=F_{\boldsymbol{\Omega},\bd{r}},F_{\bd{\Omega}}^{\boldsymbol{\beta}}$, once we have verified that $m^{\br}(\overline{\mathcal Disc (F_{\bd{\Omega},\bd{r}})})$, $m^{\br}(\overline{\mathcal Disc (F_{\bd{\Omega}}^{\boldsymbol{\beta}})})=0$. It is enough to show $m^{\br}(\overline{\mathcal Disc (F_{\bd{\Omega},\bd{r}})})=0$.   \\

Let $\mathcal M_{\Omega_i}:=\{g\in G: \bd{q}(g^{-1})\in\partial \Omega_i^*\}$. Using the $NAH$ decomposition, we can see that $\mathcal M_{\Omega_i}$ is a closed submanifold of $G$ of codimension 1, thus $m^{\br}(\mathcal M_{\Omega_i})=0$.  \par

%\begin{lem}
%The manifold $M_{\Omega_i}$ is closed in $G$.
%\end{lem}
%\begin{proof}
%Suppose $\{g_i\}$ is a sequence in $\mathcal M_{\Omega_i}$ and $\lim_{i\rightarrow\infty}g_i=g$.  We need to show that $\bd{q}(g^{-1})\in\partial \Omega_i^*$.  Since the map $\bd{q}$ is continuous and $\bd{q}(g_i)\in\partial  \Omega_i^*$ for each $i$, we have $\bd{q}(g^{-1})\in \overline{\partial  \Omega_i^*}=\partial  \Omega_i^*\cup\{\infty\} $.  We claim that $\bd{q}(g^{-1})$ cannot be $\infty$.  If it were so, then $\bd{q}_{\Re}(g_i^{-1})\rightarrow \bd{q}(g^{-1})=\infty$.  But $\bd{q}_{\Re}(g_i^{-1})$ remains bounded as $\Omega_i$ is, so we arrived at a contradiction. 
%\end{proof}

Next, we show that
\begin{lem}\label{1537}The immersion $\mathcal M_{\Omega_i}\rightarrow \pi_1(\mathcal M_{\Omega_i})$ is proper: for each $g\in \mathcal{M}_{\Omega_i}$, there does not exist infinitely many $\gamma_j\in \Gamma_H\bs\Gamma$, $g_j\in \mathcal M$, $1<j<\infty$, such that $\lim_{j\rightarrow\infty}\gamma_jg_j=g$. 
\end{lem}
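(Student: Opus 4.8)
I would prove the lemma by contradiction. Suppose that for some $g\in\mathcal{M}_{\Omega_i}$ there are infinitely many distinct cosets $\Gamma_H\gamma_j\in\Gamma_H\bs\Gamma$ and elements $g_j\in\mathcal{M}_{\Omega_i}$ with $\gamma_jg_j\to g$ in $G$. The first step is to feed the membership $g_j\in\mathcal{M}_{\Omega_i}$ into the generalized Iwasawa decomposition $G=HAN$: writing $g_j=h_ja_{s_j}n_{-c_j}$ with $h_j\in H$, the apex of $g_j^{-1}(S)$ is $\bd q(g_j^{-1})=c_j+e^{s_j}\jj$, which lies on $\partial\Omega_i^*$; since $\Omega_i$ is bounded and $\partial\Omega_i^*\subset\{\Im\geq 1\}$ this pins $c_j\in\overline{\Omega_i}$ and $s_j\geq 0$, and likewise for $g=ha_sn_{-c}$. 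Two elementary observations then drive the argument: (a) since $h_j\in H=\operatorname{Stab}(S)$ we have $\gamma_jh_j(S)=\gamma_j(S)$, the geodesic plane over the circle $C_j':=\gamma_j(C(0,1))\in\mathcal P$, and $\gamma_jh_j(X_1)$ is a unit normal to that plane; (b) distinct cosets $\Gamma_H\gamma_j$ give distinct planes $\gamma_j^{-1}(S)$, hence distinct circles $D_j:=\gamma_j^{-1}(C(0,1))$ of $\mathcal P$, so — since two distinct circles of a packing that are Hausdorff-close to a common positive-radius circle would have overlapping interiors — any convergent subsequence of $(C_j')$ or $(D_j)$ must shrink to a point of $\overline{\mathcal P}=\Lambda(\Gamma)$.

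Next I would split on the behaviour of $(s_j)$. If $(s_j)$ has a bounded subsequence, then along it $a_{s_j}n_{-c_j}$ stays in a fixed compact subset of $G$, so after a further subsequence $a_{s_j}n_{-c_j}\to\kappa$ and hence $\gamma_jh_j=(\gamma_jg_j)(a_{s_j}n_{-c_j})^{-1}\to g\kappa^{-1}\in G$. Then the planes $\gamma_j(S)=\gamma_jh_j(S)$ converge to a genuine geodesic plane, so $C_j'$ converges to a positive-radius circle of $\mathcal P$; by observation (b) the $C_j'$ are then eventually constant, which forces the $\gamma_j$ into a single left coset $\gamma_0\Gamma_H$, and a short argument combining discreteness of $\Gamma$ in $G$, discreteness of $\Gamma_H$ in $H$, and the apex constraints on $g_j$ and $g$ shows the $\Gamma_H\gamma_j$ must stabilize, contradicting their distinctness. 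So we may assume $s_j\to\infty$.

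The case $s_j\to\infty$ is the heart of the matter and where I expect the main obstacle. Here $a_{s_j}n_{-c_j}$ leaves every compact set, so $\gamma_jh_j=(\gamma_jg_j)n_{c_j}a_{-s_j}$ diverges in $G$; explicitly $\gamma_jh_j(\jj)=\gamma_jg_j(c_j+e^{s_j}\jj)\to g(\infty)=:\omega\in\partial\HH^3$, while $\gamma_jh_j$ evaluated at $a_{s_j}n_{-c_j}(\jj)=-e^{-s_j}c_j+e^{-s_j}\jj\to 0$ returns $\gamma_jg_j(\jj)\to g(\jj)\in\HH^3$. By the North--South dynamics of divergent sequences in $G=\PSL(2,\CC)$ — a subsequence of $\gamma_jh_j$ converges to the constant $\omega$ uniformly away from one repelling point, which the last computation forces to be $0\notin C(0,1)$ — one gets $\gamma_jh_j\to\omega$ uniformly on $\partial S=C(0,1)$, so $C_j'\to\{\omega\}$ with $\omega\in\Lambda(\Gamma)$ while the normal frames $\gamma_jh_j(X_1)$ concentrate at $\omega$. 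The remaining task is to rule this out: the collapse $C_j'\to\{\omega\}$ together with the fact that $\gamma_jg_j$ stays bounded in $G$ (so that after the $NAH$ renormalization the frames do not degenerate) should force $\omega$ to be a parabolic fixed point of $\Gamma$ and the $\gamma_j$ to eventually enter a fixed horoball at $\omega$ in $\Gamma\bs G$; since $\Gamma$ is geometrically finite and that cusp has rank $1$ — the same rank as the unique cusp of $\Gamma_H\bs H$, as used before the finiteness of $\mu_H^{\ps}$ via Theorem~6.3 of \cite{OS13} — only finitely many cosets $\Gamma_H\gamma_j$ can have $\gamma_j(S)$ meeting that horoball with the required frame direction, contradicting infinitude. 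Everything outside this cusp analysis is routine compactness and packing bookkeeping.
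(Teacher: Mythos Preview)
Your proposal takes a far more elaborate route than the paper and leaves the main case genuinely incomplete. The paper's proof is a five--line packing argument: from $\gamma_jg_j\to g$ and the continuity of $\bd q$ one has $\bd q(g_j^{-1}\gamma_j^{-1})\to\bd q(g^{-1})\in\partial\Omega_i^{*}$; since $\Im\bd q(g^{-1})\ge 1$, all but finitely many of these apices lie in a fixed cylinder $L$ over height $\tfrac12\Im\bd q(g^{-1})$, and the separation inequality \eqref{2227} for apices of non-intersecting hemispheres caps the number of such apices by a constant depending only on $\bd q(g^{-1})$. No $HAN$ coordinates, no case split on $s_j$, no North--South dynamics, no cusp analysis.

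Your argument, by contrast, pivots on whether the $A$--coordinate $s_j$ stays bounded, and in the case $s_j\to\infty$ you arrive at a collapsing family $C_j'\to\{\omega\}$ with $\omega=g(\infty)$ and then assert that this ``should force $\omega$ to be a parabolic fixed point of $\Gamma$''. That is where the argument breaks: $g$ is an arbitrary element of $\mathcal M_{\Omega_i}$, and nothing forces $g(\infty)$ to be a cusp of $\Gamma$ --- packing circles can shrink to \emph{any} point of $\Lambda(\Gamma)$, parabolic or not. Without $\omega$ parabolic, the rank-$1$ horoball count you sketch has no footing, and you have not supplied an alternative. There is also a smaller gap in your bounded case: Hausdorff convergence of $C_j'=\gamma_j(C(0,1))$ to a positive-radius circle only stabilises the \emph{left} cosets $\gamma_j\Gamma_H$, and the passage to finitely many \emph{right} cosets $\Gamma_H\gamma_j$ is not just discreteness --- $\Gamma_H$ is infinite, and one can have $\eta_jh_j$ converge in $H$ with $\eta_j\in\Gamma_H$ escaping to infinity. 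The upshot is that you are attempting, with substantially more machinery and two unresolved steps, something the paper obtains from the single packing observation \eqref{2227} it set up just before the lemma.
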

\begin{proof}
We argue by contradiction.  Suppose there exist infinitely many $\gamma_j\in \Gamma_H\bs\Gamma$, $g_j\in \mathcal M$, $1\leq j<\infty$, such that $\lim_{j\rightarrow\infty}\gamma_jg_j=g$.  Since $\bd{q}$ is continuous, we have $\lim_{j\rightarrow\infty}\bd{q}(g_j^{-1}\gamma_j^{-1})=\bd{q}(g^{-1})$.  We note that $\bd{q}(g_j^{-1}\gamma_j^{-1})$ are apices from disjoint hemispheres.  Let $L:=\left\{z+r\bd{j}\in\mathbb H^3:z\in C(\bd{q}_{\Re}(g^{-1}), 1), r\in\left(\frac{\Im(\bd{q}(g^{-1}))}{2},\infty\right)\right\}$.  Then $\bd{q}(g^{-1})\in L$.  But \eqref{2227} implies that there can be at most $\frac{(1+\mathcal \Im(\bd{q}(g^{-1})))^2}{ \Im(\bd{q}(g^{-1})^2}$ many points in $L$.  Thus we have a contradiction.
\end{proof}

Lemma \ref{1537} implies that $\overline{\pi_1(\mathcal M_{\Omega_i})}=\pi_1(\mathcal M_{\Omega_i})$, so that $m^{\br}(\overline{\pi_1(\mathcal M_{\Omega_i})} )=0$.  Let $\mathcal M_{\bd{\Omega}}:=\cup_{i=1}^k \mathcal M_{\Omega_i}$. As a finite union of $\mathcal M_{\Omega_i}$, $\mathcal M_{\bd{\Omega}}$ is closed in $G$ and the immersion $\mathcal M_{\bd{\Omega}}\rightarrow \Gamma\bs\Gamma \mathcal M_{\bd{\Omega}}$ is proper, so that $ \Gamma\bs\Gamma \mathcal M_{\bd{\Omega}}$ is closed in $\Gamma\bs G$ and $m^{\br}(\Gamma\bs \Gamma \mathcal M_{\bd{\Omega}})=0$.  \par

Our next lemma shows that $F_{\bd{\Omega},\boldsymbol{r}}$ is continuous outside $\Gamma\bs \Gamma \mathcal M_{\bd{\Omega}}$, and as a corollary, $m^{\br}(\overline{\mathcal Dist(F_{\bd{\Omega},\boldsymbol{r}})}), m^{\br}(\overline{\mathcal Dist(F_{\bd{\Omega}}^{\boldsymbol{\beta}})})\leq m^{\br}(\Gamma\bs \Gamma\mathcal M_{\bd{\Omega}}) =0$, whence we can obtain Theorem \ref{07141} by applying Proposition \ref{1426} with $f=\chi_E$, $\Psi=\Psi_0$, $\widetilde{\Psi}=F_{\bd{\Omega},\boldsymbol{r}}, F_{\bd{\Omega}}^{\boldsymbol{\beta}}$.

\begin{lem} Let $\mathcal M_{\bd{\Omega}}=\cup_{i=1}^k \mathcal M_{\Omega_i}$, then the function $F_{\bd{\Omega},\boldsymbol{r}}$ is continuous in $\Gamma\bs G-\Gamma\bs\Gamma \mathcal M_{\bd{\Omega}}$. 
\end{lem}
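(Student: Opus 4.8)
The plan is to show that the only way $F_{\bd{\Omega},\boldsymbol{r}}$ can fail to be continuous at a point $\Gamma g$ is if some apex $\bd{q}(g^{-1}\gamma)$, $\gamma\in\Gamma/\Gamma_H$, lies on the topological boundary $\partial\Omega_i^*$ of one of the chimneys, i.e. if $\Gamma g\in\Gamma\bs\Gamma\mathcal M_{\bd{\Omega}}$. So fix $g$ with $\Gamma g\notin\Gamma\bs\Gamma\mathcal M_{\bd{\Omega}}$; equivalently, for every $\gamma\in\Gamma/\Gamma_H$ and every $1\le i\le k$ we have $\bd{q}(g^{-1}\gamma)\notin\partial\Omega_i^*$. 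I want to produce a neighborhood $U$ of $\Gamma g$ on which each counting function $g'\mapsto\#(\bd{q}(g'^{-1}\Gamma/\Gamma_H)\cap\Omega_i^*)$ is locally constant, which immediately gives continuity (indeed local constancy) of $F_{\bd{\Omega},\boldsymbol{r}}$ there.

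First I would invoke the uniform bound \eqref{1440}: for $g'$ in a fixed relatively compact neighborhood of $g$, the number of $\gamma\in\Gamma/\Gamma_H$ with $\bd{q}(g'^{-1}\gamma)$ lying in the bounded horizontal slab $\{z+r\jj:z\in\Omega_{i,1},\,r>\tfrac12\}$ (where $\Omega_{i,1}$ is a slightly enlarged copy of $\Omega_i$) is bounded by an absolute constant depending only on $\mathcal D(\Omega_i)$ — this uses the Observation \eqref{2227} that apices of disjoint hemispheres are $\ge$ (sum of heights) apart, together with the properness statement of Lemma \ref{1537}. Hence, \emph{locally near $g$, only finitely many $\gamma$'s are relevant}: there is a finite set $\Sigma\subset\Gamma/\Gamma_H$ and a neighborhood $U_0$ of $g$ such that for all $g'\in U_0$ and all $i$, $\bd{q}(g'^{-1}\gamma)\in\overline{\Omega_i^*}$ forces $\gamma\in\Sigma$, and moreover every $\gamma\notin\Sigma$ stays a definite distance away from each $\overline{\Omega_i^*}$.

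Next, for each of the finitely many $\gamma\in\Sigma$ and each $i$, since $\bd{q}(g^{-1}\gamma)\notin\partial\Omega_i^*$, the point $\bd{q}(g^{-1}\gamma)$ is either in the open chimney $\Omega_i^*$ or in the open complement $\overline{\HH^3}\setminus\overline{\Omega_i^*}$. By continuity of $g'\mapsto\bd{q}(g'^{-1}\gamma)$ (established in Section \ref{setup}), there is a neighborhood $U_\gamma\subset U_0$ of $g$ on which the membership $\bd{q}(g'^{-1}\gamma)\in\Omega_i^*$ does not change, for every $i$. Intersecting the finitely many $U_\gamma$ over $\gamma\in\Sigma$ gives a neighborhood $U$ of $g$ on which each $\#(\bd{q}(g'^{-1}\Gamma/\Gamma_H)\cap\Omega_i^*)$ is constant, hence $F_{\bd{\Omega},\boldsymbol{r}}$ is constant on $U$. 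Passing to the quotient $\Gamma\bs G$ (the relevant quantities are left $\Gamma$-invariant by construction) completes the argument.

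The main obstacle is the bookkeeping in the first step: making precise that the count is "locally finite" uniformly, i.e. that as $g'$ varies in a small neighborhood no new $\gamma$ can have its apex wander into a chimney from far away. This is exactly what the properness of the immersion in Lemma \ref{1537} is for — without it one could imagine a sequence $\gamma_j$ with apices accumulating onto $\partial\Omega_i^*$ — but one must combine it correctly with the packing bound \eqref{1440} applied to a slightly fattened region, and with the fact that the chimneys $\Omega_i^*$ extend to $r=\infty$ (so the "bounded" part of the argument really happens in the slab $\tfrac12<r<$ something, while for large $r$ the $\gamma$'s with $\infty\in g'^{-1}\gamma(\partial S)$ contribute nothing new since $\bd{q}=\infty\notin\overline{\Omega_i^*}$). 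Once that uniform finiteness is in hand, the rest is a routine continuity argument on a finite set of continuous functions.
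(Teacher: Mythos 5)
Your plan and the paper's proof diverge at the core step, and the place you yourself flag as ``the main obstacle'' is actually where a genuine gap lies rather than mere bookkeeping. Your strategy is to first show that only finitely many $\gamma\in\Gamma/\Gamma_H$ are ``relevant'' uniformly over a small neighborhood $U_0$ of $g$, and then apply continuity of $\bd{q}$ to each of these finitely many $\gamma$'s. But neither ingredient you invoke actually delivers that uniform finiteness. The packing bound \eqref{1440} controls, \emph{for each fixed} $g'$, how many $\gamma$ can have $\bd{q}(g'^{-1}\gamma)\in\Omega_i^*$; it does not say that the same finite collection of $\gamma$'s works for all $g'$ in a neighborhood, since different nearby $g'$ could, a priori, pull entirely different $\gamma$'s into the chimney. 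Lemma \ref{1537} likewise does not give you what you want: it establishes properness of the immersion for apices landing on $\partial\Omega_i^*$, i.e.\ for the set $\mathcal M_{\Omega_i}$, not for apices entering $\overline{\Omega_i^*}$ in general. The potentially dangerous scenario is a sequence of distinct $\gamma_j$ and nearby $g_j'\to g$ with apices $\bd{q}(g_j'^{-1}\gamma_j)$ having $\Im\to\infty$ while $\Re$ stays over $\Omega_i$; ruling this out requires an argument beyond what your sketch supplies, since the conjugating perturbations $g_\infty'^{-1}g_j'\to e$ do \emph{not} move points of $\mathbb{H}^3$ at unbounded height by a uniformly small hyperbolic distance, so the apices $\bd{q}(g_\infty'^{-1}\gamma_j)$ need not be close to $\bd{q}(g_j'^{-1}\gamma_j)$.

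The paper's proof sidesteps uniform finiteness entirely. Having chosen a connected $O_g$ disjoint from $\Gamma\mathcal M_{\bd\Omega}$ (using closedness of $\Gamma\backslash\Gamma\mathcal M_{\bd\Omega}$, which is where Lemma \ref{1537} is actually used), it shows that for \emph{each individual} $\gamma\in\Gamma_H\bs\Gamma$ the indicator $\bd{1}\{\bd{q}((\gamma g')^{-1})\in\Omega_i^*\}$ is constant on $O_g$, by a path-connectedness argument: along any path in $O_g$ the apex $\bd{q}((\gamma p(s))^{-1})$ cannot cross $\partial\Omega_i^*$ (this would put $p(s)$ in $\Gamma\mathcal M_{\Omega_i}$, a contradiction), and it cannot jump to $\infty$ either, because $\bd{q}_{\Re}$ is continuous and bounded on the chimney, so a jump to $\infty$ would violate continuity. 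Once each indicator is locally constant, the count $\sum_\gamma\bd{1}\{\cdot\}$ is a sum of locally constant $\{0,1\}$-valued functions whose value at $g$ is finite by \eqref{1440}, hence itself locally constant; no uniform statement over $\gamma$ is needed. In effect, a complete justification of your ``uniform local finiteness'' step would reproduce exactly this path-connectedness argument, so what you describe as a routine bookkeeping hurdle is actually the heart of the proof and is left unproved in your write-up.
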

\begin{proof}
Since the immersion $\mathcal M_{\bd{\Omega}}\rightarrow \Gamma\bs\Gamma \mathcal M_{\bd{\Omega}}$ is proper,  for any $g\in G-\Gamma\mathcal M_{\bd{\Omega}}$, there exists a simply connected open neighborhood $O_g\subset G$ of $g$ such that $O_g\cap \Gamma \mathcal M_{\bd{\Omega}}=\emptyset$.  We claim that $F_{\bd{\Omega}}^{\boldsymbol{\beta}}$ is constant on $\Gamma\bs\Gamma O_g$, by showing that for each $1\leq i\leq k$ and each $\gamma\in \Gamma_H\bs\Gamma$, $\bd{1}\{ \bd{q}((\gamma g)^{-1})\in\Omega_i^* \}$ is constant in $O_g$.  We argue by contradiction.  Suppose $\bd{1}\{ \bd{q}((\gamma g)^{-1})\in\Omega_i^* \}$ is not constant in $O_g$, then there exists $g_1,g_2\in O_g$ such that $\bd{q}(( \gamma g_1)^{-1})\in\Omega_i^* $ and $ \bd{q}(( \gamma g_2)^{-1})\not\in\Omega_i^* $.  We observe that 
$$\overline{\Omega_{i}^* }\cap \overline{ G-\Omega_{i}^*}=\partial\Omega_i\cup\{\infty\}.$$
Let $p:[0,1]\rightarrow O_g$ be a path with $p(0)=g_1$ and $p(1)=g_2$.  Then for some $s\in(0,1]$, we have $\bd{q}((\gamma p(s))^{-1})\in \partial\Omega_i^*\cup\{\infty\}$.  If $\bd{q}((\gamma p(s))^{-1})\in \partial \Omega_i^*$, then $p(s)\in O_g\cap \Gamma\mathcal M_{{\Omega}_i}$, violating $\Omega_g\cap \Gamma\mathcal M_{\bd{\Omega}}=\emptyset$.  Thus $\bd{q}((\gamma p(s))^{-1})=\infty$ and we let $$s_0=\inf\{s\in(0,1]: \bd{q}((\gamma p(s))^{-1})=\infty\}.$$  By the continuity of $p$ and $\bd{q}$, we have $ \bd{q}((\gamma p(s_0))^{-1})=\infty$.  By the definition of $s_0$, for $s<s_0$, we have $\bd{q}((\gamma p(s))^{-1})\in \Omega_j^*$. Therefore, as $\Omega_j$ is bounded, $\bd{q}_{\Re}((\gamma p(s))^{-1})$ is bounded for $s\in(0,s_0)$.  But $\bd{q}_{\Re}((\gamma p(s_0))^{-1})=\infty$, and this is impossible as $\bd{q}_{\Re}$ is a continuous map.  Thus we arrived at a contradiction. \par
Therefore, for any $\gamma$, the function $\bd{1}\{ \bd{q}((\gamma g)^{-1}\in\Omega_i^* \}$ is constant in $O_g$ as desired.  This implies $F_{\bd{\Omega},\boldsymbol{r}}$ is constant in $\Gamma\bs\Gamma O_g$.  The lemma is thus proved.

\par

\end{proof}

\section{Pair correlation and nearest neighbor spacing}\label{pairnear}
In this section we deduce Theorem \ref{pairthm} (limiting pair correlation) and Theorem \ref{nearthm} (limiting nearest neighbor spacing) from Theorem \ref{07141}.  We give full detail for the limiting pair correlation; the proof for the limiting nearest neighbor spacing is similar and we give a sketch.  

\subsection{Pair correlation} The purpose of this section is to prove Theorem \ref{pairthm}.  Let $E\subset \CC$ be an open set with $E\cap \mathcal P\neq\emptyset$ and $\partial E$ empty or piecewise smooth.  The pair correlation function $P_{E,t}(\xi)$ on the set $\mathcal C_t$ is defined as
\al{
P_{E,t}(\xi)=\frac{1}{2\#\{\mathcal C_t\cap E\}}\sum_{\substack{p, q\in\mathcal C_t\cap E\\q\neq p}}\bd{1}\{|p-q|<e^{-t}\xi\}.
}

Let $B_r$ be the disk in $\CC$ centered at 0 with radius $r$.  We analyze the pair correlation function $P_{E,t}$ via the following mixed 1-moment function $P_{E,t,\epsilon}$:
\al{\label{0500}
P_{E,t,\epsilon}(\xi):=\frac{e^{2t}}{2\pi\epsilon^2\cdot\#\{\mathcal C_t\cap E\}}\int_{\CC}\chi_E(z)\mathcal{N}_t(B_{\epsilon},z)\mathcal N_t(B_\xi,z)dz-\frac{1}{2}.
}
Here $\epsilon$ is taken as a small enough positive number, say $\epsilon<\min\{\frac{1}{10},\frac{\xi}{10}\}$, and \eqref{2227} implies that $\mathcal N_t(B_\epsilon, z)\leq 1, \forall z\in \mathbb C$.\par
The function $P_{E,t,\epsilon}$ is an approximate to $P_{E,t}$.  Indeed, 

\al{\nonumber
&\int_{\CC}\chi_E(z)\mathcal{N}_t(B_{\epsilon},z)\mathcal N_t(B_\xi,z)dz\\
\nonumber=&\sum_{p\in\mathcal C_t}\int_{\CC}\bd{1}\{z\in e^{-t}B_{\epsilon}+p\}\mathcal N_t(B_\xi,z)\chi_E(z)dz\\
\nonumber\leq& \sum_{p\in\mathcal C_t}e^{-2t}\pi \epsilon^2 \mathcal N_t(B_{\xi+\epsilon},p)\chi_{E_{\epsilon^+}}(p)\\\nonumber
=&e^{-2t}\pi \epsilon^2  \sum_{p\in\mathcal C_t}\mathcal\chi_{E_{\epsilon^+}}(p)+e^{-2t}\pi \epsilon^2  \sum_{p\in\mathcal C_t}\mathcal\chi_{E_{\epsilon^+}}(p)\sum_{\substack{q\in\mathcal C_t\\q\neq p}}\bd{1}\{|q-p|<e^{-t}(\xi+\epsilon)\}\\
\leq& e^{-2t}\pi\epsilon^2\#(\mathcal C_t\cap E_{\epsilon^+})+e^{-2t}\pi\epsilon^2\sum_{p\in\mathcal C_t\cap E_{\epsilon^+}}\sum_{\substack{q\in\mathcal C_t\\q\neq p}}\bd{1}\{|q-p|<e^{-t}(\xi+\epsilon)\}.
\label{0459}
}

Putting \eqref{0459} back to \eqref{0500}, we have
\al{\label{pairineq1}
P_{E,t,\epsilon}(\xi)\leq \frac{\#(\mathcal C_t\cap E_{\epsilon^+})}{2\#(\mathcal C_t\cap E)}-\frac{1}{2}+ \frac{\#(\mathcal C_t\cap E_{\epsilon^+})}{\#(\mathcal C_t\cap E)}P_{E_{\epsilon^+},t}(\xi+\epsilon).
}
Similarly, we have
\al{\label{pairineq2}
P_{E,t,\epsilon}(\xi)\geq \frac{\#(\mathcal C_t\cap E_{\epsilon^-})}{2\#(\mathcal C_t\cap E)}-\frac{1}{2}+ \frac{\#(\mathcal C_t\cap E_{\epsilon^-})}{\#(\mathcal C_t\cap E)}P_{E_{\epsilon^-},t}(\xi-\epsilon).
}
We can work out from \eqref{pairineq1} and \eqref{pairineq2} that
\al{\label{0917}
P_{E,t}(\xi)\leq \frac{\#(\mathcal C_t\cap E_{\epsilon^+})}{\#(\mathcal C_t\cap E)}P_{E_{\epsilon^+},t,\epsilon}(\xi+\epsilon)+\frac{\#(\mathcal C_t\cap E_{\epsilon^+})}{2\#(\mathcal C_t\cap E)}-\frac{1}{2}
}
and
\al{\label{0918}
P_{E,t}(\xi)\geq \frac{\#(\mathcal C_t\cap E_{\epsilon^-})}{\#(\mathcal C_t\cap E)}P_{E_{\epsilon^-},t,\epsilon}(\xi-\epsilon)+\frac{\#(\mathcal C_t\cap E_{\epsilon^-})}{2\#(\mathcal C_t\cap E)}-\frac{1}{2}.
}

Letting $t\rightarrow\infty$ and then $\epsilon\rightarrow 0^+$ in \eqref{0917} and \eqref{0918},  Theorem \ref{pairthm} is proved once we have shown
\al{\label{1002}
\lim_{\epsilon\rightarrow 0^+}\lim_{t\rightarrow \infty}P_{E_{\epsilon^+},t,\epsilon}(\xi+\epsilon)=\lim_{\epsilon\rightarrow 0^+}\lim_{t\rightarrow \infty}P_{E_{\epsilon^-},t,\epsilon}(\xi-\epsilon)=P(\xi)
}
for some continuously differentiable function $P(\xi)$.\par

Now we analyze the limit of $P_{E,t,\epsilon}(\xi)$, as $t\rightarrow \infty$.  From Theorem \ref{circlecounting}, we have 
\al{\label{1454}
\lim_{t\rightarrow\infty}\frac{\# \mathcal C_t\cap E}{e^{\delta t}}=\frac{\mu_{H}^{\text{PS}}(\Gamma_H\bs H)w(E)}{\delta\cdot m^{\bms}(\Gamma\bs G)}.
}

From Theorem \ref{07141}, we have
\al{\label{0643}\nonumber
&\lim_{t\rightarrow\infty}e^{(2-\delta)t}\int_{\CC}\chi_E(z)\mathcal N_t(B_\epsilon,x)\mathcal N_t(B_\xi,x)dx\\\nonumber
=&\frac{w(E)}{m^{\bms}(\Gamma\bs G)}\cdot \int_{\Gamma\bs G}\left(\sum_{\gamma_1\in\Gamma_H\bs\Gamma}\bd{1}\{\bd{q}((\gamma g)^{-1})\in B_{\epsilon}\}\right)\cdot \#\{\gamma\in\Gamma_{H}\bs\Gamma: \bd{q}((\gamma g)^{-1})\in B_{\xi}^*\}dm^{\br}(g)\\
=&\frac{w(E)}{m^{\bms}(\Gamma\bs G)}\cdot \int_{\Gamma_H\bs G}\bd{1}\{\bd{q}(g^{-1})\in B_\epsilon^{\infty}\}\cdot \#\{\gamma\in\Gamma_{H}\bs\Gamma: \bd{q}((\gamma g)^{-1})\in B_{\xi}^*\}dm^{\br}(g).
}
Writing $g=ha_tn_z$ in the $HAN$ decomposition, from Proposition \ref{0130}, we have
\al{
\eqref{0643}=\frac{w(E)}{m^{\bms}(\Gamma\bs G)}\cdot\int_{\gamma_H\bs H}\int_{z\in B_\epsilon}\int_{0}^\infty e^{-\delta t}\#\{\gamma\in \Gamma_H\bs\Gamma:\bd{q}(n_{-z}a_{-t}h^{-1}\gamma^{-1})\in B_\xi^*\}dtdzd\mu_{H}^{\text{PS}}(h).
}

The conditions $z\in B_\epsilon$ and
$\bd{q}(n_{-z}a_{-t}h^{-1}\gamma^{-1})\in B_\xi^*$
imply that 
$\bd{q}(a_{-t}h^{-1}\gamma^{-1})\in B_{\xi+\epsilon}^*$.
Therefore, we have 
\al{\label{2155}
\eqref{0643}\leq \frac{\pi\epsilon^2w(E)}{m^{\bms}(\Gamma\bs G)}\cdot\int_{\Gamma_H\bs H}\int_{0}^\infty e^{-\delta t}\#\{\gamma\in \Gamma_H\bs\Gamma:\bd{q}(a_{-t}h^{-1}\gamma^{-1})\in B_{\xi+\epsilon}^*\}dt d\mu_{H}^{\text{PS}}(h)
}
and similarly,
\al{\label{2156}
\eqref{0643}\geq \frac{\pi\epsilon^2w(E)}{m^{\bms}(\Gamma\bs G)}\cdot\int_{\Gamma_H\bs H}\int_{0}^\infty e^{-\delta t}\#\{\gamma\in \Gamma_H\bs\Gamma:\bd{q}(a_{-t}h^{-1}\gamma^{-1})\in B_{\xi-\epsilon}^*\}dt d\mu_{H}^{\text{PS}}(h).
}

Define
\al{\nonumber
P(\xi):=&\frac{\delta}{2\mu_{H}^{\text{PS}}(\Gamma_H\bs H)}\int_{\Gamma_H\bs H}\int_{0}^\infty e^{-\delta t}\#\{\gamma\in \Gamma_H\bs\Gamma:\bd{q}(a_{-t}h^{-1}\gamma^{-1})\in B_{\xi}^*\}dt d\mu_{H}^{\text{PS}}(h)-\frac{1}{2}\\
=&\frac{\delta}{2\mu_{H}^{\text{PS}}(\Gamma_H\bs H)}\int_{\Gamma_H\bs H}\int_{0}^\infty e^{-\delta t}\#\{\gamma\in \Gamma_H\bs(\Gamma-\Gamma_H):\bd{q}(a_{-t}h^{-1}\gamma^{-1})\in B_{\xi}^*\}dt d\mu_{H}^{\text{PS}}(h).
\label{1641}
}

Combining \eqref{1454}, \eqref{0643}, \eqref{2155}, \eqref{2156}, we obtain
\al{\label{0958}
P(\xi-\epsilon)\leq \liminf_{t\rightarrow\infty}P_{E,t,\epsilon}(\xi)\leq \limsup_{t\rightarrow\infty}P_{E,t,\epsilon}(\xi)\leq P(\xi+\epsilon).
}
The definition of $P$ is independent of the set $E\subset \mathbb C$, so \eqref{0958} also holds with $E$ replaced by $E_{\epsilon^\pm}$.  Thus the relation \eqref{1002} is established once we have shown $P$ is continuously differentiable.

First, we observe that $P(\xi)$ is indeed finite, as $\mu_H^{\ps}(\Gamma_H\bs H)$ is finite and the integrand of \eqref{1641} is bounded: for each fixed $h$ and $t$, from \eqref{1440} we have 
$$\#\{\gamma\in\Gamma_H\bs\Gamma: \bd{q}(a_{-t}h^{-1}\gamma^{-1})\in B_{\xi}^{*}\}\leq (2\xi+1)^2.$$ 

Next, we show that the pair correlation function $P$ is continuously differentiable. \par

We observe that if there exists $t>0$ such that $\bd{q}(a_{-t}h^{-1}\gamma^{-1})\in B_{\xi}^*$, then $\bd{q}(h^{-1}\gamma^{-1})\in \mathfrak{C}_\xi \cup B_{\xi}^{*}$, where $\mathfrak C_\xi$ is the cone defined at \eqref{1010}.

We thus write $P(\xi)$ into two parts:
\al{\label{1120}\nonumber
P(\xi)=&\frac{\delta}{2\mu_{H}^{\text{PS}}(\Gamma_H\bs H)}\int_{\Gamma_H\bs H}\sum_{\substack{\gamma\in\Gamma_H\bs(\Gamma-\Gamma_H)\\\bd{q}(h^{-1}\gamma^{-1})\in B_{\xi}^* }}\int_{0}^\infty e^{-\delta t}\bd{1}\{\bd{q}(a_{-t}h^{-1}\gamma^{-1})\in B_{\xi}^*\}dt d\mu_{H}^{\text{PS}}(h)\\
%&+\frac{\delta}{2\mu_{H}^{\text{PS}}(\Gamma_H\bs H)}\int_{\Gamma_H\bs H}\sum_{\substack{\gamma\in\Gamma_H\bs(\Gamma-\Gamma_H)\\\bd{q}(h^{-1}\gamma^{-1})\in \mathfrak C_\xi }}\int_{0}^\infty e^{-\delta t}\bd{1}\{\bd{q}(a_{-t}h^{-1}\gamma^{-1})\in B_{\xi}^*\}dt d\mu_{H}^{\text{PS}}(h)\nonumber\\\nonumber
%=&\frac{1}{2\mu_{H}^{\text{PS}}(\Gamma_H\bs H)}\int_{\Gamma_H\bs H}\sum_{\substack{\gamma\in\Gamma_H\bs(\Gamma-\Gamma_H)\\\bd{q}(h^{-1}\gamma^{-1})\in B_{\xi}^* }}
%\left(1-\left(\frac{\Re(\bd{q}(h^{-1}\gamma^{-1}))}{\xi}\right)^\delta\right)  d\mu_{H}^{\text{PS}}(h)\\\nonumber
%&+\frac{1}{2\mu_{H}^{\text{PS}}(\Gamma_H\bs H)}\int_{\Gamma_H\bs H}\sum_{\substack{\gamma\in\Gamma_H\bs(\Gamma-\Gamma_H)\\\bd{q}(h^{-1}\gamma^{-1})\in \mathfrak C_{\xi} }}
%\left(\Im(\bd{q}(h^{-1}\gamma^{-1}))^\delta-\left(\frac{\Re(\bd{q}(h^{-1}\gamma^{-1}))}{\xi}\right)^\delta\right)  d\mu_{H}^{\text{PS}}(h)\\
 \nonumber=&\frac{1}{2\mu_{H}^{\text{PS}}(\Gamma_H\bs H)}\int_{\Gamma_H\bs H}\sum_{\gamma\in^\Gamma_H\bs (\Gamma-\Gamma_H)}  \bd{1}\{\bd{q}(h^{-1}\gamma^{-1})\in B_\xi^*\} \left(1-\left(\frac{\vert\bd{q}_{\Re}(h^{-1}\gamma^{-1})\vert}{\xi}\right)^\delta\right) \\
&+ \bd{1}\{\bd{q}(h^{-1}\gamma^{-1})\in \mathfrak C_\xi \} \left(\Im(\bd{q}(h^{-1}\gamma^{-1}))^\delta-\left(\frac{\vert\bd{q}_{\Re}(h^{-1}\gamma^{-1})\vert}{\xi}\right)^\delta\right) d\mu_H^{\ps}(h).
}
To proceed, we need the following lemma: 
\begin{lem}Define $$p(h,\xi)=\sum_{\gamma\in\Gamma_H\bs(\Gamma-\Gamma_H)}\bd{1}\{\bd{q}(h^{-1}\gamma^{-1})\in B_\xi^*\} +  \bd{1}\{\bd{q}(h^{-1}\gamma^{-1})\in \mathfrak C_\xi \} \Im(\bd{q}(h^{-1}\gamma^{-1}))^\delta.$$
Fixing $\xi$, then $p(h,\xi)$ is bounded for $h\in\Gamma_H\bs H$. \end{lem}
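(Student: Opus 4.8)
The plan is to bound $p(h,\xi)$ by writing $p(h,\xi)=p_1(h,\xi)+p_2(h,\xi)$, where
\[
p_1(h,\xi):=\sum_{\gamma\in\Gamma_H\bs(\Gamma-\Gamma_H)}\bd{1}\{\bd{q}(h^{-1}\gamma^{-1})\in B_\xi^*\},
\quad
p_2(h,\xi):=\sum_{\gamma\in\Gamma_H\bs(\Gamma-\Gamma_H)}\bd{1}\{\bd{q}(h^{-1}\gamma^{-1})\in \mathfrak C_\xi\}\,\Im(\bd{q}(h^{-1}\gamma^{-1}))^\delta,
\]
and estimating each by a constant depending only on $\xi$. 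For $\gamma\in\Gamma_H\bs(\Gamma-\Gamma_H)$ the circle $C_\gamma:=h^{-1}\gamma^{-1}(\partial S)$ runs over the circles of the packing $h^{-1}(\mathcal P)$ other than $h^{-1}(C(0,1))=C(0,1)$; the geodesic planes $h^{-1}\gamma^{-1}(S)$ are pairwise disjoint or tangent in $\HH^3$, and only those $\gamma$ for which $C_\gamma$ is an honest circle $C(c_\gamma,r_\gamma)$ (so that $\bd{q}(h^{-1}\gamma^{-1})=c_\gamma+r_\gamma\bd j$) contribute.

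For $p_1$ I would simply invoke the packing bound already established above: an apex lying in $B_\xi^*$ has imaginary part $>1$ and real part in $B_\xi$, so the bound $\#\{\gamma:\bd q(h^{-1}\gamma^{-1})\in B_\xi^*\}\le(2\xi+1)^2$ (the case $t=0$ of the inequality recorded just before this lemma, equivalently \eqref{1440}) gives $p_1(h,\xi)\le(2\xi+1)^2$.

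For $p_2$ the key step is an area count. First I would record the geometric input: since distinct circles of an Apollonian packing are tangent or have disjoint closed disks, and $h$ maps the closed disk bounded by $C(0,1)$ either onto itself (if $h\in H_0$) or onto the closure of its complement, there is at most one circle $C_{\gamma_0}$ among the $C_\gamma$ that encloses any of the others — it is the $h^{-1}$-image of the circle of $\mathcal P$ whose open disk contains $h(\infty)$, and is absent when $h\in H_0$ — so that, after deleting $\gamma_0$, the open disks $B(c_\gamma,r_\gamma)$ are pairwise disjoint. Now if $\bd{q}(h^{-1}\gamma^{-1})\in\mathfrak C_\xi$ then $r_\gamma\le1$ and $|c_\gamma|<\xi r_\gamma$, so $B(c_\gamma,r_\gamma)\subset B(0,(1+\xi)r_\gamma)$. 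Grouping the $\gamma\ne\gamma_0$ by the dyadic range $r_\gamma\in(2^{-k-1},2^{-k}]$ and comparing Euclidean areas, the pairwise disjoint disks $B(c_\gamma,r_\gamma)$ lying in $B(0,(1+\xi)2^{-k})$ force at most $4(1+\xi)^2$ such $\gamma$ for each $k\ge0$, each contributing at most $2^{-k\delta}$; summing the geometric series and adding the single term from $\gamma_0$ (which is $\le1$ since $r_{\gamma_0}\le1$ there),
\[
p_2(h,\xi)\le 1+4(1+\xi)^2\sum_{k\ge0}2^{-k\delta}=1+\frac{4(1+\xi)^2}{1-2^{-\delta}}.
\]
Together with the bound on $p_1$ this yields a bound on $p(h,\xi)$ depending only on $\xi$, uniform in $h\in\Gamma_H\bs H$, which is the assertion.

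The main obstacle is the geometric observation in the last paragraph — that replacing the non-bounding circles of $\mathcal P$ by their images under $h^{-1}\in H$ destroys the ``no nesting'' property of an Apollonian packing in at most one circle, namely the one surrounding $h(\infty)$. This is immediate when $h$ lies in the identity component $H_0$ (then $h^{-1}(\mathcal P)$ is again a bounded packing with bounding circle $C(0,1)$ and all other circles strictly inside, so $p_1\equiv0$ as well), and in general needs only a short inspection of the action of $h^{-1}$ on the disk bounded by $C(0,1)$; everything else is an elementary area/geometric-series count and is manifestly uniform in $h$.
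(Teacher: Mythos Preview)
Your proof is correct. The bound on $p_1$ is identical to the paper's, and your bound on $p_2$ follows the same dyadic-shell-plus-geometric-series skeleton, but the packing step is carried out differently.

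The paper packs in $\HH^3$: it uses the observation \eqref{2228} that apices of non-intersecting hemispheres are at hyperbolic distance $\ge\operatorname{Arccosh}(3)$, so disjoint hyperbolic balls of fixed radius around the apices fit in a $2$-neighborhood $\widetilde{\mathfrak C}_\xi^{1/2^{n+1},1/2^n}$ of each dyadic truncated cone; dividing hyperbolic volumes bounds the number of apices per shell, and each shell contributes $\ll 2^{-n\delta}$. You instead pack in $\CC$: you observe that the footprints $B(c_\gamma,r_\gamma)$ of the hemispheres are pairwise disjoint (after discarding the at most one $\gamma_0$ whose disk surrounds $h(\infty)$), note that the cone condition forces $B(c_\gamma,r_\gamma)\subset B(0,(1+\xi)2^{-k})$ when $r_\gamma\in(2^{-k-1},2^{-k}]$, and compare Euclidean areas. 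Your route is a shade more elementary---no hyperbolic distance or volume computation---and gives explicit constants, at the cost of the short extra argument isolating $\gamma_0$ when $h\notin H_0$. The paper's hyperbolic packing avoids that case analysis entirely, since the apex separation \eqref{2228} holds regardless of which disk, if any, contains $h(\infty)$.
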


\begin{proof} First, from \eqref{1440}, we have
\al{
\sum_{\gamma_H\bs\Gamma-\Gamma_H}\bd{1}\{\bd{q}(h^{-1}\gamma^{-1})\in B_\xi^*\}<(2\xi+1)^2.
}
Next, let $\mathfrak C_\xi^{t_1,t_2}$ be the truncated cone
$$\mathfrak C_\xi^{t_1,t_2}:=\{z+r\bd{j}\in \HH^3: \frac{r}{|z|}>\frac{1}{\xi}, t_1<r\leq t_2 \}.$$
Recall the definition of $\mathfrak C_\xi$ at \eqref{1010}.  An elementary exercise in hyperbolic geometry shows that the 2-neighborhood of $\mathfrak C_\xi$ (the set of all points in $\HH^3$ having hyperbolic distance $<2$ to $\mathfrak C_\xi$) is contained in the cone
\al{\label{1009}\widetilde{\mathfrak C}_\xi:= \left\{z+r\bd{j}\in \HH^3: \frac{r}{|z|}>\frac{1}{e^2\xi}, 0<r\leq e^2 \right\},}
and the 2-neighborhood of $\mathfrak C_\xi^{t_1,t_2}$ is contained in the truncated cone 
$$\widetilde{\mathfrak C}_\xi^{t_1,t_2}:=\left\{z+r\bd{j}\in \HH^3: \frac{r}{|z|}>\frac{1}{e^2\xi}, \frac{t_1}{e^2}<r\leq t_2e^2 \right\}.$$
Therefore, for each $0<t<1$,
\al{\nonumber
&\sum_{\gamma\in \Gamma_H\bs (\Gamma-\Gamma_H)} \bd{1}\{\bd{q}(h^{-1}\gamma^{-1})\in \mathfrak C_\xi \} \Im(\bd{q}(h^{-1}\gamma^{-1}))^\delta\\\nonumber
 = &\sum_{n=0}^{\infty}   \sum_{\gamma\in \Gamma_H\bs (\Gamma-\Gamma_H)}\bd{1}\{\bd{q}(h^{-1}\gamma^{-1})\in \mathfrak C_\xi^{\frac{1}{2^{n+1}},\frac{1}{2^n}}\} \Im(q(h^{-1}\gamma^{-1}))^{\delta}  \\\label{12581}
\leq & \sum_{n=0}^{\infty}   \frac{\mathcal{V}ol(\widetilde{\mathfrak C}_\xi^{\frac{1}{2^{n+1}}, \frac{1}{2^{n}}})}{4\pi}\frac{1}{2^{n\delta}}\\
\label{12580}\ll&\sum_{n=0}^\infty \frac{1}{2^{n\delta}}<\infty,
}
where in \eqref{12581} we used a packing (by hyperbolic balls) argument combined with \eqref{2228}, and here $\mathcal{V}ol(\widetilde{\mathfrak C}_\xi^{\frac{1}{2^{n+1}},\frac{1}{2^{n}}})$ is the hyperbolic volume of $\widetilde{\mathfrak C}_\xi^{\frac{1}{2^{n+1}},\frac{1}{2^n}}$.  
\end{proof}

Now we show that $P(\xi)$ is differentiable.  For small $\epsilon>0$,
\al{\nonumber
&\frac{P(\xi+\epsilon)-P(\xi)}{\epsilon}=\\
\nonumber
&\frac{1}{2\mu_H^{\text{PS}}(\Gamma_H\bs H)}\int_{h\in\Gamma_H\bs H}\sum_{\gamma\in\gamma_H\bs(\Gamma-\Gamma_H)}\bd{1}\{\bd{q}(h^{-1}\gamma^{-1})\in B_{\xi}^*\cup \mathfrak C_\xi \}\cdot\frac{\vert\bd{q}_{\Re}(h^{-1}\gamma^{-1})\vert^{\delta}\left(\frac{1}{\xi^{\delta}}-\frac{1}{(\xi+\epsilon)^{\delta}}  \right)}{\epsilon}d\mu_{H}^{\ps}(h)\\\nonumber
+& \frac{1}{2\mu_H^{\text{PS}}(\Gamma_H\bs H)}\int_{h\in\Gamma_H\bs H}\sum_{\gamma\in\gamma_H\bs(\Gamma-\Gamma_H)}\bd{1}\{\bd{q}(h^{-1}\gamma^{-1})\in (B_{\xi+\epsilon}^*-B_\xi^*)\}\cdot \frac{1-\left(\frac{\vert\bd{q}_{\Re}(h^{-1}\gamma^{-1})\vert}{\xi+\epsilon}\right)^\delta}{\epsilon} \\\nonumber
&\hspace{0.7in}+ \bd{1}\{\bd{q}(h^{-1}\gamma^{-1})\in (\mathfrak C_{\xi+\epsilon}-\mathfrak C_\xi)\} \cdot\frac{\Im(\bd{q}(h^{-1}\gamma^{-1}))^\delta-\left(\frac{\vert\bd{q}_{\Re}(h^{-1}\gamma^{-1})\vert}{\xi+\epsilon}\right)^\delta}{\epsilon} d\mu_{H}^{\ps}(h)\\\nonumber
=&\frac{\delta}{2\mu_H^{\text{PS}}(\Gamma_H\bs H)}\int_{h\in\Gamma_H\bs H}\sum_{\gamma\in\gamma_H\bs(\Gamma-\Gamma_H)}\bd{1}\{\bd{q}(h^{-1}\gamma^{-1})\in B_{\xi}^*\cup \mathfrak C_\xi \}\cdot\frac{\vert\bd{q}_{\Re}(h^{-1}\gamma^{-1})\vert^{\delta}}{\xi^{\delta+1}}(1+O_\xi(\epsilon))d\mu_{H}^{\ps}(h)\\
&+O_\xi\left(\int_{h\in\Gamma_H\bs H} p(h,\xi+\epsilon)-p(h,\xi) d\mu_H^{\ps}(h)\right).\label{0229}
}

Noting that
$$\sum_{\gamma\in\gamma_H\bs(\Gamma-\Gamma_H)}\bd{1}\{\bd{q}(h^{-1}\gamma^{-1})\in B_{\xi}^*\cup \mathfrak C_\xi \}\cdot\frac{\vert\bd{q}_{\Re}(h^{-1}\gamma^{-1})\vert^{\delta}}{\xi^{\delta+1}}\ll_\xi p(h,\xi),$$
and letting $\epsilon\rightarrow 0^{+}$, we have 
\al{\label{2230}\lim_{\epsilon\rightarrow 0^+}\frac{P(\xi+\epsilon)-P(\xi)}{\epsilon}=  \frac{\delta}{2\mu_H^{\text{PS}}(\Gamma_H\bs H)}\int_{h\in\Gamma_H\bs H}\sum_{\substack{\gamma\in\gamma_H\bs\Gamma-\Gamma_H\\\bd{q}(h^{-1}\gamma^{-1})\in B_{\xi}^*\cup \mathfrak C_\xi}}\frac{\vert\bd{q}_{\Re}(h^{-1}\gamma^{-1})\vert^{\delta}}{\xi^{\delta+1}}d\mu_{H}^{\ps}(h),}
once we have shown that the term $O(\cdot)$ from \eqref{0229} goes to 0 as $\epsilon\rightarrow 0^{+}$.   
Indeed, since $p(h,\xi)$ is bounded with respect to $h$ and monotone with respect to $\xi$, by Lebesgue's Dominated Convergence Theorem,
\al{\label{0253}&\nonumber\lim_{\epsilon\rightarrow 0^+}\int_{h\in\Gamma_H\bs H}p(h,\xi+\epsilon)-p(h,\xi) d\mu_{H}^{\ps}(h) \\\nonumber
=&\sum_{\gamma\in\gamma_H\bs\Gamma-\Gamma_H}\int_{h\in\Gamma_H\bs H}\bd{1}\{\bd{q}(h^{-1}\gamma^{-1})\in \partial ( B_\xi^*\cup \mathfrak C_\xi ) \}\cdot\max\{\Im (\bd{q}(h^{-1}\gamma^{-1}))^{\delta},1\}   d\mu_{H}^{\ps}(h).
}
We can check that for each $\gamma\in \Gamma_H\bs (\Gamma-\Gamma_H)$, the set $H_{\gamma,\xi}:=\{h\in H: \bd{q}(h^{-1}\gamma^{-1})\in\partial(B_\xi^{*}\cup \mathfrak C_\xi ) \}$ is contained in an algebraic subvariety of $H$ of codimension 1.  Therefore, $\mu_{H}^{\ps}(H_{\gamma,\xi})=0$, so that \eqref{0253}=0, and \eqref{2230} is established.  \\

By a similar consideration, we can also show 
\aln{
\lim_{\epsilon\rightarrow 0^+}\frac{P(\xi)-P(\xi-\epsilon)}{\epsilon}=  \frac{\delta}{2\mu_H^{\text{PS}}(\Gamma_H\bs H)}\int_{h\in\Gamma_H\bs H}\sum_{\substack{\gamma\in\gamma_H\bs\Gamma-\Gamma_H\\\bd{q}(h^{-1}\gamma^{-1})\in B_{\xi}^*\cup \mathfrak C_\xi}}\frac{\vert\bd{q}_{\Re}(h^{-1}\gamma^{-1})\vert^{\delta}}{\xi^{\delta+1}}d\mu_{H}^{\ps}(h).
}

Therefore, $P$ is differentiable. The continuity of $P'$ follows from that, by the Dominated convergence theorem, 
\al{\nonumber\limsup_{\epsilon\rightarrow 0^\pm}|P'(\xi+\epsilon)-P'(\xi)|\ll_\xi &\sum_{\gamma\in \Gamma_H\bs(\Gamma-\Gamma_H)}\int_{\Gamma_H\bs H}\bd{1}\{\bd{q}(h^{-1}\gamma^{-1})\in\partial (B_\xi^*\cup \mathfrak C_\xi)\}d\mu_{H}^{\ps}(h) \\
\nonumber
=&0.
}

Finally, the reason that $P$ is supported away from 0 is due to the elementary observation \eqref{2227}. Theorem \ref{pairthm} is thus completely proved.

\subsection{Nearest Neighbor Spacing}
As usual we let $E\subset \CC$ be an open set with no boundary or piecewise smooth boundary, and with $E\cap {P}\neq \emptyset$.  
For any $p\in\mathcal C_t$, let $d_t(p)=\min\{|p-q|: q\in\mathcal C_t, q\neq p\}$.
The nearest neighbor spacing function $Q_{E,t}$ is defined by
\al{
Q_{E,t}(\xi)=\frac{1}{\#\{\mathcal C_t\cap E\}}\sum_{\substack{p\in\mathcal C_t\cap E}}\bd{1}\{d_t(p)<e^{-t}\xi\}.
}
We sketch our analysis for $Q_{E,t}$, which is in a very similar fashion as we did for the pair correlation function.  The function $Q_{E,t}(\xi)$ can be approximated by the following function
\al{\nonumber
&Q_{E,t,\epsilon}(\xi):=\\
&1-\frac{e^{2t}}{\pi\epsilon^2\cdot\#\{\mathcal C_t\cap E\}}\int_\CC \chi_E(z)\bd{1}\left\{\#( a_{-t}n_z \bd{q}(\Gamma)\cap B_{\epsilon}^*)=1 \right\}\bd{1}\left\{ \#(a_{-t}n_z \bd{q}(\Gamma)\cap B_{\xi}^*)=1 \right\}dz.
}
Indeed, one can check that
\al{
Q_{E,t}(\xi-\epsilon)\leq Q_{E,t,\epsilon}(\xi)\leq Q_{E,t}(\xi+\epsilon).
}

Applying Theorem \ref{07141} to $Q_{E,t,\epsilon}$ and letting $\epsilon\rightarrow 0^+$,  we obtain Theorem \ref{nearthm}.  The continuity of $Q$ follows from that, by the Dominated Convergence Theorem, 
\al{\nonumber\limsup_{\epsilon\rightarrow 0^\pm}|Q(\xi+\epsilon)-Q(\xi)|\ll_\xi &\sum_{\gamma\in \Gamma_H\bs(\Gamma-\Gamma_H)}\int_{\Gamma_H\bs H}\int_0^\infty e^{-\delta t}\bd{1}\{\bd{q}(a_{-t}h^{-1}\gamma^{-1})\in\partial B_\xi^*\}dtd\mu_{H}^{\ps}(h) \\
\nonumber
=&0.
}

\bibliographystyle{plain}
\bibliography{fractals}

\end{document}